\theoremstyle{plain}
  \newtheorem{theorem}{Theorem}[section]
  \newtheorem{prop}[theorem]{Proposition}
  \newtheorem{proposition}[theorem]{Proposition}
  \newtheorem{fact}[theorem]{Fact}
  \newtheorem{lemma}[theorem]{Lemma}
  \newtheorem{cor}[theorem]{Corollary}
  \newtheorem{claim}[theorem]{Claim}
\theoremstyle{definition}
  \newtheorem{definition}[theorem]{Definition}
  \newtheorem{remark}[theorem]{Remark}
\numberwithin{equation}{section}
\numberwithin{theorem}{section}
\renewcommand\Re{{\operatorname{Re}}}
\renewcommand\Im{{\operatorname{Im}}}
\newcommand\R{{\mathbb{R}}}
\newcommand\C{{\mathbb{C}}}
\newcommand\Z{{\mathbb{Z}}}
\newcommand\al{\alpha}
\newcommand\la{\lambda}
\newcommand\Be{{\mathbf e}}
\newcommand\BN{{\mathbf N}}
\newcommand\cD{{\mathcal D}}
\newcommand\CG{{\mathcal G}}
\newcommand\CM{{\mathcal M}}
\newcommand\eps{\varepsilon}
\newcommand\rang{\rangle}
\renewcommand\a{\alpha}
\newcommand\bs{\backslash}
\newcommand\im{\operatorname{im}}
\newcommand{\wb}{\overline}
\newcommand\til{\widetilde}
\newcommand\nc\newcommand
\DeclareMathOperator
\renewcommand\bs\boldsymbol
\definecolor{purple}{rgb}{0.9,0,0.8}
\nc{\nick}[1]{{\color{purple} #1}}
\nc\lf{\lfloor}
\nc{\rf}{\rfloor}
\nc\pr{{\mathbf{P}}}
\nc\lam{{\lambda}}
\nc\tP{{\til P}}
\nc{\xxi}{\xi}
\nc{\ii}{{\sqrt{-1}}}
\dmo{\Leb}{{Leb}}
\nc{\wt}{\widetilde}
\nc{\mLeb}{m_{\Leb}}
\nc{\tran}{{\mathsf{T}}}
\nc{\mom}{{m}}
\nc{\avec}{{\bs a}}
\nc{\bvec}{{\bs b}}
\nc{\uvec}{{\bs u}}
\nc{\vvec}{{\bs v}}
\nc{\wvec}{{\bs w}}
\nc{\ul}{\underline}
\nc{\us}{{\bs{s}}}
\nc{\bbs}{{\bs{s}}}
\nc{\pol}{{P}}
\nc{\pt}{Q}
\nc{\pk}{P}
\nc{\ptt}{{\wt{P}}}
\nc{\pkt}{{\wt{Q}}}
\nc{\tor}{{\R/\Z}}
\nc{\mk}{{m}}
\nc{\mt}{{\wt m}}
\dmo\bad{{bad}}
\nc{\badarcs}{{E_{\bad}}}
\nc{\Cp}{{K_0}}
\nc{\Smooth}{{K}}
\nc{\smooth}{{\kappa}}
\nc{\Spread}{{L}}
\nc{\spread}{{\gamma}}
\nc{\dbd}{{K}}
\nc{\tpol}{{\til\pol}}
\nc{\ali}{{\al_i}}
\nc{\alo}{{\al_1}}
\nc{\alm}{{\al_\mom}}
\nc{\dil}{{L}}
\nc{\mat}{{A}}
\dmo{\row}{{row}}
\nc{\bzeta}{{\bs \zeta}}
\nc{\bxi}{{\bs \xi}}
\nc\height{{\tau}}
\nc\subG{{K_{G}}}
\nc\Points{{\CM}}
\nc\Char{{\Phi}}
\nc\charr{{\phi}}
\numberwithin{equation}{section}
\renewcommand{\phi}{\varphi}
\renewcommand{\emptyset}{\varnothing}
\newcommand{\cE}{\mathcal E}
\newcommand{\cA}{\mathcal A}
\newcommand{\cB}{\mathcal B}
\newcommand{\cG}{\mathcal G}
\newcommand{\bE}{\mathbb E}
\newcommand*{\dprime}{{\prime \prime}}
\newcommand{\dd}{{\rm d}}
\newcommand*{\avg}[1]{\left\langle {#1} \right\rangle}
\newcommand{\norm}[1]{\left\lVert#1\right\rVert}
\def\im#1{\mathrm{Im}\,#1}
\renewcommand{\le}{\leqslant}
\renewcommand{\ge}{\geqslant}
\newcommand{\bR}{\mathbb R}
\newcommand{\bC}{\mathbb C}
\newcommand{\bZ}{\mathbb Z}
\newcommand{\bT}{\mathbb T}
\newcommand{\bN}{\mathbb N}
\newcommand{\bP}{\mathbb P}
\nc\ang{{\theta}}
\nc\dang{{\tau}}
\nc\Walk{{\bs W}}
\nc\walk{{W}}
\nc\pos{{q}}
\nc\PGauss{\bP_{\BN(0,1)}}
\nc\EGauss{\bE_{\BN(0,1)}}
\nc{\NIQ}[1]{{\color{red} \sf $\spadesuit\spadesuit\spadesuit$ Nick: [#1]}}
\nc{\NIC}[1]{{\color{purple} \sf $\spadesuit\spadesuit\spadesuit$ Nick: [#1]}}
\title[Kac polynomials: closest roots to the unit circle]{Universality of Poisson limits for moduli of roots of Kac polynomials}
\author{Nicholas A.\ Cook}
\address{\tiny{Nicholas A.\ Cook, Duke University, Durham, NC 27708, USA}}
\email{nickcook@math.duke.edu}
\author{Hoi H. Nguyen}
\address{\tiny{Hoi H. Nguyen, Department of Mathematics, The Ohio State University, 231 W 18th Ave, Columbus, OH 43210 USA.}}
\email{nguyen.1261@math.osu.edu}
\author{Oren Yakir}
\address{\tiny{Oren Yakir, School of Mathematics, Tel Aviv University, Tel Aviv 6997801, Israel.}}
\email{oren.yakir@gmail.com}
\author{Ofer Zeitouni}
\address{\tiny{Ofer Zeitouni, Department of Mathematics, Weizmann Institute of Science, Rehovot 76100, Israel.}}
\email{ofer.zeitouni@weizmann.ac.il}
\thanks{This project has received funding from the European Research Council (ERC) under the European Union's Horizon 2020 research and innovation programme (grant agreement No. 692452). HN is supported by NSF grant DMS-1752345. OY is supported by ISF Grants 382/15 and 1903/18.}
\begin{document}

	\maketitle
	\begin{abstract}
	We give a new proof of a recent resolution  
	\cite{MS} by Michelen and Sahasrabudhe of 
	a conjecture of Shepp and Vanderbei \cite{shepp_vanderbei}
	that the moduli of
	 roots of Gaussian
	Kac polynomials of degree $n$, centered at $1$ and rescaled by $n^2$,  should form a  Poisson point process. 
	We use this new approach to verify a conjecture from \cite{MS} that the 
	Poisson statistics are in fact universal.
\end{abstract}
\section{Introduction}
Let $\xi_0,\ldots,\xi_n$ be i.i.d. 
random variables and consider the \emph{Kac} polynomial
\begin{equation}
\label{eq:def_of_poly}
f(z) := \sum_{k=0}^{n}\xi_k z^k.
\end{equation}
This paper concerns the typical behavior
of the random zero set $Z(f) := \{z\in \C:f(z)=0\}$.
It is well known that if 
$\bE\log(1+|\xi_0|)<\infty$ then the roots of 
$f$ concentrate uniformly around the unit circle 
$\mathbb{S}^1:=\{|z|=1\}$ as the degree $n$ tends to infinity 
\cite{erdos_turan}, \cite{sparo_sur} 
(see also \cite{hughes_nikeghbali} for a more modern perspective). 
Finer results on this convergence are also known: 
typically, most roots lie inside an annulus of width 
$O(n^{-1})$ around the unit circle (see
\cite{shepp_vanderbei} for
Gaussian coefficients and
\cite{ibragimov_zeitouni} 
for more general choices of coefficients). Microscopic correlation
functions for the Gaussian case 
(also, for multivariate systems of polynomials) 
appear in \cite{BSZ00};
see \cite{DONgV,TV} for a universality result in the case of non-Gaussian
coefficients.

In view of the above, the following question becomes
quite natural: 
\emph{What is the typical distance of the 
	set of roots to the unit circle?} Indeed, this question 
was already suggested by Shepp and Vanderbei 
\cite[Section~6]{shepp_vanderbei}, who conjectured that
for Gaussian coefficients,
the set $\{n^2(1-|z|)\, : \, z\in Z(f)\}$ (identified with its counting  measure) converges 
to a Poisson point process as $n\to \infty$;
their conjecture would imply in particular that
the closest root is typically at distance of order $n^{-2}$ 
from the unit circle. This conjecture was recently confirmed by Michelen and Sahasrabudhe 
\cite{MS}, using a reduction to a point process determined by the
polynomial \textit{on the unit circle} and a
high-order Kac--Rice formula.
\begin{theorem}[{\cite[Theorem~1]{MS}}]
	\label{thm:main_result_gaussians}
	Assume that $f$ is given by \eqref{eq:def_of_poly} 
	with $\xi_i$ i.i.d., 
	real-valued Gaussian of mean zero.
	Then
	\[
	\{n^2(1-|z|)\, : \, z\in Z(f)\}
	\]
	converges in distribution 
	(with respect to the vague topology), as $n\to\infty$,
	to a 
	homogeneous Poisson point process on $\mathbb{R}$
	of intensity $1/12$.
	In particular, for all $x>0$,
	\[
	\lim_{n\to\infty} \bP\left(n^2 \, {\normalfont \text{dist}}\left(\mathbb{S}^1,Z(f)\right)\ge x\right) = e^{-x/6}.
	\]
\end{theorem}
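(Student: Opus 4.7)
The plan is to prove Poisson convergence via the method of factorial moments: for every bounded interval $I \subset \R$ and $k \ge 1$, I would show
\[
\bE\bigl[N_I^{(n)}(N_I^{(n)}-1)\cdots(N_I^{(n)}-k+1)\bigr] \xrightarrow[n\to\infty]{} (|I|/12)^k,
\]
where $N_I^{(n)} := \#\{z \in Z(f) : n^2(1-|z|) \in I\}$, together with the analogous joint-moment asymptotics for disjoint intervals (whose limit factorises over the intervals). Together with tightness, these imply convergence to a homogeneous Poisson point process of intensity $1/12$, from which the tail probability $e^{-x/6}$ follows as a void probability.

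\textbf{Reduction to the unit circle.} The first step is to express the event that $f$ has a root at $z = (1-t/n^2)e^{i\theta}$ with $t$ bounded in terms of the joint behaviour of $f$ and $f'$ restricted to $\mathbb{S}^1$. Radially Taylor-expanding,
\[
f(z) \;=\; f(e^{i\theta}) \;-\; \frac{t}{n^2}\, e^{i\theta} f'(e^{i\theta}) \;+\; r_n(t,\theta),
\]
the remainder $r_n$ is of typical order $t^2/n^{3/2}$, while on the relevant event one has $|f(e^{i\theta})| = O(1/\sqrt n)$ (atypical, since the typical scale is $\sqrt n$). At leading order, $f(z) = 0$ therefore imposes the complex equation $f(e^{i\theta})/(e^{i\theta} f'(e^{i\theta})) = t/n^2$, so roots near $\mathbb{S}^1$ can be enumerated by the zeros in $\theta$ of the real-valued function $\Im\bigl(f(e^{i\theta})\,\overline{e^{i\theta} f'(e^{i\theta})}\bigr)$, each such $\theta^{*}$ being tagged by the value $t^{*} = n^2\,\Re\bigl(f(e^{i\theta^{*}})/(e^{i\theta^{*}}f'(e^{i\theta^{*}}))\bigr)$.

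\textbf{Intensity.} I would then apply the Kac--Rice formula to the real-valued process above, restricted to the event $t^{*} \in I$. For Gaussian $\xi_k$, the rescaled vector $(f, f'/n, f''/n^2)$ on $\mathbb{S}^1$ is a stationary centred Gaussian process whose covariance entries are normalised Dirichlet-kernel type sums with explicit limits. Plugging these into Kac--Rice and extracting the leading asymptotics in $n$ should give a first-moment intensity of exactly $|I|/12$ after integrating $\theta$ over $[0,2\pi)$; the constant $1/12$ arises from an explicit bivariate Gaussian integral combined with the ratio $\bE|f'(e^{i\theta})|^2/\bE|f(e^{i\theta})|^2 \sim n^2/3$.

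\textbf{Higher moments and decoupling --- the main obstacle.} The serious difficulty is extending the first-moment calculation to all $k$. I would partition $[0,2\pi)$ into arcs of length $\ell_n$ with $1/n \ll \ell_n \ll 1$. Since $\theta \mapsto f(e^{i\theta})$ has correlation length of order $1/n$, values on arcs separated by more than $\ell_n$ are nearly independent and the $k$-point Kac--Rice integrand factorises asymptotically over arcs, producing the product $(|I|/12)^k/k!$ times the number of $k$-tuples of arcs. Within a single arc I must show that the probability of two or more roots in the rescaled window is $o(\ell_n^2)$, which requires a quantitative anti-concentration estimate for the pair $(f(e^{i\theta_1}),\,f(e^{i\theta_2}))$ at distances $|\theta_1 - \theta_2| \le \ell_n$, where the joint Gaussian covariance becomes nearly degenerate and the ``repulsion'' between two nearby roots at radial distance $O(1/n^2)$ must be controlled. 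Establishing this short-distance bound, together with a uniform $L^1$ domination of the $k$-point Kac--Rice kernel that survives the Gaussian conditioning, is where I expect the main technical effort to lie.
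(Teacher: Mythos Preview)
Your strategy is sound for the Gaussian case and is essentially the route taken in the original proof by Michelen and Sahasrabudhe \cite{MS}: reduce roots near $\mathbb{S}^1$ to a point process on the circle determined by $(f,f')$, then compute factorial moments via the $k$-point Kac--Rice formula, with the main labour being the short-range repulsion estimate you flag at the end. The present paper deliberately avoids Kac--Rice. Instead it samples $f$ at a discrete net $\{\theta_\alpha\}$ of cardinality $N\asymp n^2/\log^{K_0}n$, and at each $\theta_\alpha$ uses the affine approximation \eqref{eq:def_of_linear_approximation} to produce an explicit predicted root modulus $\rho_\alpha=\rho_\alpha(W(\theta_\alpha))$ as a rational function of the four-vector $W(\theta_\alpha)=(X,Y,X'/n,Y'/n)$; the factorial moments of the resulting discrete point process $\mu_f$ are then computed by an elementary Gaussian integral (\Cref{lem:gaussian.computation}) plus a covariance-block decoupling at separated points (\Cref{lem:decorr}), and close pairs are handled by a deterministic separation lemma (\Cref{lemma:good_events_are_well_separated}) rather than a two-point Kac--Rice density bound. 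The payoff for this reformulation is universality: since $\{Z_\alpha\in U\}=\{W(\theta_\alpha)\in\cD\}$ for an explicit domain $\cD\subset\R^4$, the general sub-Gaussian case (\Cref{thm:main}) follows from the Gaussian one by a quantitative local CLT for $W(\bs t)$ (\Cref{prop:compare}), whereas your Kac--Rice computation does not transfer readily to discrete coefficient laws.
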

It is natural to guess that the phenomenon described in \Cref{thm:main_result_gaussians} is in fact universal in the sense that the 
theorem
holds true for 
a wide class of coefficients distribution, 
and not just for Gaussians. 
In this regard, it is natural (and also suggested in \cite{MS}) 
to conjecture that
\Cref{thm:main_result_gaussians} holds for
random \emph{Littlewood} polynomials, that is,
when the $\xi_i$'s are i.i.d.
chosen uniformly at random from the set $\{\pm 1\}$. 

The goal of this paper
is multifold. We will give a new proof 
of \Cref{thm:main_result_gaussians},
based on ideas appearing in
our recent works \cite{YZ}, \cite{CNg}. 
In this proof, and in contrast with
\cite{MS},
we do not use the Kac--Rice formula, 
a powerful tool which is hard to apply outside the case of
Gaussian coefficients (or, with considerably more effort,
coefficients whose law possesses a smooth density).
Instead, we work directly
with local linear approximations of  $f$ 
(see Section \ref{sec:proof_method} for more details). Besides 
the intrinsic interest in a different proof, the main
advantage of our approach is that it is more 
suitable 
toward the study of 
universality problem, for which our main result
confirms the prediction. Namely, we show
\begin{theorem}\label{thm:main} \Cref{thm:main_result_gaussians} continues to hold as long as the $\xi_i$ are 
	i.i.d. copies of a non-degenerate, sub-Gaussian, real-valued, zero-mean 
	random variable.
\end{theorem}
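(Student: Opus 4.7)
The plan is to mirror the forthcoming new argument for \Cref{thm:main_result_gaussians}, replacing the exact Gaussian computations with universality inputs. The underlying strategy is to discretize the unit circle $\mathbb{S}^1$ into $\sim n$ arcs $I_j$ of length $2\pi/n$ centered at angles $\ang_j = 2\pi j/n$, and for a bounded interval $[a,b]\subset \R$ to let $\chi_j$ be the indicator of the event that $f$ has at least one zero $z$ with $z/|z|\in I_j$ and $n^2(1-|z|)\in [a,b]$. The sum $\sum_j \chi_j$ then counts the rescaled roots in the target window, and the Poisson limit with intensity $(b-a)/12$ reduces to the usual three ingredients: (i) $\bP(\chi_j=1) = (b-a)/(12n) + o(1/n)$ uniformly in $j$; (ii) asymptotic independence of $(\chi_{j_l})$ across well-separated arcs; and (iii) negligibility of coincidences on closely-spaced arcs.

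The key analytic input, following \cite{YZ,CNg}, is that on scales $|z - e^{\ii \ang_j}| \ll 1/n$ the polynomial $f$ is sharply approximated by its first-order Taylor expansion $L_j(z) := f(e^{\ii \ang_j}) + f'(e^{\ii \ang_j})(z - e^{\ii \ang_j})$, whose unique root is $z_j^\ast := e^{\ii \ang_j} - f(e^{\ii \ang_j})/f'(e^{\ii \ang_j})$. A Rouch\'e-type argument, conditional on control of the higher Taylor coefficients of $f$ at $e^{\ii\ang_j}$, shows that $\chi_j = 1$ precisely when $z_j^\ast$ itself lies in the prescribed sector-times-annulus region, up to an exceptional event of vanishing probability. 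Thus, to leading order, $\chi_j$ depends only on the two-dimensional complex vector
\[
\bigl( n^{-1/2} f(e^{\ii \ang_j}),\ n^{-3/2} f'(e^{\ii \ang_j}) \bigr),
\]
whose real and imaginary parts form a linear combination of $n+1$ i.i.d.\ sub-Gaussian variables with well-behaved coefficients.

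Universality enters here through a multivariate Lindeberg-type central limit theorem: the joint distribution of the normalized pair above converges to the same four-dimensional real Gaussian limit as in the Gaussian case, and substitution into the ratio condition identified in the previous paragraph yields (i) with the correct constant. For (ii), the CLT is applied jointly at angles $\ang_{j_1}, \ldots, \ang_{j_k}$ with pairwise separations $\gg 1/n$; the near-orthogonality of the character vectors $(e^{\ii m \ang_{j_l}})_{m=0}^n$ forces the limiting Gaussian vectors to be independent, so that the $(\chi_{j_l})$ are asymptotically independent Bernoullis. Poisson convergence of $\sum_j \chi_j$ then follows from Stein--Chen or a factorial-moment calculation.

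The main obstacle is item (iii) together with the quantitative small-ball estimates needed to make the linearization step rigorous in the non-Gaussian regime. Concretely, one must establish, uniformly in $\ang$, that $|f(e^{\ii \ang})|$ and $|f'(e^{\ii \ang})|$ admit power-law anti-concentration at scales well below $\sqrt n$ and $n^{3/2}$ respectively, and also that the joint law of $(f(e^{\ii \ang}), f(e^{\ii \ang'}))$ does not concentrate anomalously when $|\ang-\ang'|\lesssim 1/n$. In the Gaussian setting explicit densities handle all of these events simultaneously; for general sub-Gaussian coefficients one instead invokes Hal\'asz/Ess\'een-type small-ball inequalities and exploits the non-degeneracy of $\xi_0$ along the lines of \cite{CNg}, then passes from a pointwise to a uniform statement via a union bound over a net of size $n^{C}$ combined with a deterministic Lipschitz estimate for $f$ on the unit circle on a high-probability event. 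This quantitative, universal anti-concentration, and its propagation to joint statistics at close angles, is expected to be the core technical content of the proof.
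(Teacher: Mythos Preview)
Your overall architecture—linearize, reduce to a question about $(f,f')$ at sampled angles, compare to the Gaussian case, then compute factorial moments—matches the paper's. But two of the quantitative choices you make are off in a way that breaks the argument, and a third ingredient is missing.

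First, the discretization into $\sim n$ arcs of width $2\pi/n$ is too coarse for the linearization to determine $\chi_j$. If $z_0$ is an actual root with $\arg(z_0)\in I_j$ and $n^2(1-|z_0|)\in[a,b]$, then $|e^{\ii\ang_j}-z_0|$ can be as large as $\pi/n$, and at that separation the Newton error obeys
\[
|z_j^\ast-z_0|\;\asymp\;\Big|\frac{f''}{f'}\Big|\cdot|e^{\ii\ang_j}-z_0|^2\;\asymp\;n\cdot n^{-2}\;=\;n^{-1},
\]
which swamps the $n^{-2}$ radial precision you need; equivalently, on the boundary of your sector one has $|f-L_j|\asymp|L_j|$, so the Rouch\'e step does not go through. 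Thus the assertion ``$\chi_j=1$ precisely when $z_j^\ast$ lies in the prescribed region'' is false at this scale. The paper instead takes $N=\lfloor n^2/\log^{K_0}n\rfloor$ sample points, so the arc width is $\pi/N$ and one gets $|z_j^\ast-z_0|=o(n^{-2})$; only then does the reduction to the data $(f(e^{\ii\ang_\al}),f'(e^{\ii\ang_\al}))$ become exact up to negligible events.

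Second, a ``Lindeberg-type CLT'' is not nearly strong enough. With the correct discretization the single-arc events have probability $\asymp 1/N\asymp (\log n)^{K_0}/n^2$, whereas Lindeberg or Berry--Esseen only controls errors of order $n^{-1/2}$ in Kolmogorov distance, which says nothing about events this small. What is actually required, and what the paper imports from \cite{CNg} as \Cref{thm:box}, is a quantitative \emph{local} CLT: for admissible tuples $\bs t$ and any box $Q$ of polynomially small side-lengths,
\[
\big|\bP\big(W(\bs t)\in Q\big)-\bP_{\mathbf N(0,1)}\big(W(\bs t)\in Q\big)\big|\;\ll\;n^{-1/2}\,|Q|.
\]
The factor $|Q|$ on the right is exactly what makes the comparison effective at scale $N^{-k}$, and obtaining it is a substantial Fourier-analytic result sensitive to the Diophantine properties of the angles. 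Relatedly, neither this local CLT nor the Hal\'asz/Ess\'een small-ball bounds you invoke hold uniformly in $\ang$: both fail near angles with $\|p_0\ang/\pi\|_{\R/\Z}\le K/n$ for small $p_0\in\Z$. The paper handles this by excising these ``bad arcs'' (\Cref{def:smooth}, \Cref{prop:exceptional}) and proving separately that no close root and no point of $\mu_f$ lands there; this step is absent from your outline and cannot be replaced by a generic union bound over a net.
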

In particular, our result 
extends to random Littlewood polynomials. 

\begin{remark} By sub-Gaussian we mean that there exists a constant $c>0$ so that $$\bP(|\xi_0|>t)\leq 2 \exp(-ct^2)$$ for all $t>0$.
	A close investigation of the current method would enable us to allow $|\xi_i|$ to have bounded $k$-th moment for some large constant $k$, however we will not elaborate on this. As it has no bearing on the conclusion, in the rest of the paper we may and will assume that the $\xi_i$'s are normalized to have unit variance.
\end{remark}

\begin{remark} Our methods allow one to consider other related 
  point processes for statistics near the unit circle, see 
  Section \ref{sec-8}.
\end{remark}
\subsection{Heuristics and proof method}
\label{sec:proof_method}

We first explain briefly the Poisson heuristic 
behind the Gaussian case, which is hinted at
in \cite[Section 6]{shepp_vanderbei}. The computations of 
\cite{shepp_vanderbei} and an asymptotic analysis show that
the \textit{expected} number of roots at distance at most
$x/n^2$ from the unit 
circle, denoted $N_x(n)$, is asymptotic to $x/6$.
(The expression in
\cite[Theorem 2]{ibragimov_zeitouni}, based on
the Kac--Rice formula, gives that immediately, and in fact the proof
there can be adapted to cover that asymptotic.)
While computing higher moments
of this quantity via the Kac--Rice formula may be feasible, 
the computation quickly becomes cumbersome, with a major
obstacle being the need to deal with 
short-range correlations and their cancellations. 
Assuming however that such short-range correlations do not affect
the higher order
moments, one notes that for macroscopically separated points
$(z_1,\ldots,z_k)$ on the unit circle, 
the joint density of the random variables $(f(z_i),f'(z_i))_{i=1}^k$ 
nearly factorizes; accepting this factorization, one obtains from
the Kac--Rice formula 
that the expectation of the $k$th moment of $N_x(n)$ converges to the
corresponding one for a Poisson random variable of parameter $x/6$.
From this, the route to a Poisson heuristic is short. In fact, once 
spatial separation between close roots is proved, the Poisson
heuristic is standard.

There are two obstacles for making the heuristic precise, even in the Gaussian case. First, one 
needs to get rid of short range correlations. This is achieved by 
noting, as was done in \cite{KSch,MS}, that in a tubular neighborhood of $z\in \mathbb{S}^1$ 
of diameter $o(1/n)$, the pair $(\Re f(z),\Im f(z))$  
can be approximated by linear curves, and the existence in this 
neighborhood 
of a root of distance $x/n^2$ from the
circle can be decided in terms of the 4-vector 
$W(z):=$ $(\Re f(z), \Im f(z),$ $
\Re f'(z), \Im f'(z))$,  with the result that except for exceptional
values of this vector, only one root
can possibly exist in the neighborhood. 
The second obstacle involves long range correlations: while 
conditioning on a fixed number of values
$W(z_i)_{i\geq 2}$ for macroscopically separated $z_i$ has only a small effect on $W(z_1)$, by analyticity
the values $\{W(z)\}_{z\in \mathbb{S}^1: |z-z_1|> 1/2}$ already   determine
$W(z_1)$ and in particular the existence of a root close to $z_1$.
This is a serious obstacle in applying methods of Poisson convergence
based on second moment methods, e.g. \cite{barbour}.

\subsection{Structure of the paper}
We can now explain our approach to 
\Cref{thm:main} and contrast it with the approach to the Gaussian 
case introduced in \cite{MS}.
There are essentially three major
steps.
\begin{enumerate}
	\item
	As explained above, 
	in the first step we show via \eqref{PPequiv.gauss} and
	\Cref{prop:PPequivalence} that the statistics of 
	$n^2(1-|z|), z\in Z(f)$ near the unit circle can be deduced 
	from the behavior of $(f(z_\alpha),f'(z_\alpha))$ 
	on the unit circle, 
	for $z_\alpha=e^{i\theta_\alpha}$ belonging to a net
	of cardinality $n^{2}/\log^{K_0}n$ (for some large constant $K_0$), and that one needs only consider
	good events, denoted $\mathcal{A}_\alpha$,
	where the linear approximation at $z_\alpha$ 
	is precise enough. Towards the universality result (and in particular to allow discrete distributions for $\xi_0$), for reasons described below 
	we remove from consideration certain ``bad points'' $z_\alpha$ 
	possessing bad arithmetic properties, leaving only
	under consideration \textit{smooth points} $z_\alpha$, see 
	\Cref{def:smooth} and \Cref{prop:exceptional}. (The latter
	removal of bad points is not needed in the Gaussian case.)
	Further, we show (see \Cref{prop:moments:sum} and \Cref{lemma:good_events_are_well_separated}) that one needs only consider $z_\alpha$'s that are separated 
	by distance at least $n^{-1+\eps}$ for some small $\eps>0$.

	\vskip .05in
	\item From Step $1$, for points $z_\alpha$ satisfying the good event,
	the location of the root close to $z_\alpha$ can be written as 
	$Z_\alpha=n^2\rho_\alpha=n^2 \rho_\alpha(f(z_\alpha),f'(z_\alpha))$, see
	\eqref{eq:solution_to_linear_system} and \eqref{eq:def_mu_f}.
	Then, the $k$th moment of $N_x(n)$ can be determined in terms of the
	joint distribution of $(Z_{\alpha_j})_{j=1}^k$ for well-separated, smooth
	$z_{\alpha_i}$. In the Gaussian case, these are almost independent (see
	the computation in Section   \ref{subsec-control_covariance} for a quantitative estimate), and the moments
	factor, see \Cref{lem:decorr}, yielding \Cref{thm:main} in the Gaussian case. No notion of smoothness of the $z_\alpha$'s is needed for that computation.
		\vskip .05in
	\item The key tool for obtaining universality is a result imported from \cite{CNg} giving a quantitative local CLT for the joint distribution of $W(z_1),\dots, W(z_k)$ for any fixed number of points $z_1,\dots, z_k\in \mathbb{S}^1$  that are both smooth and spread
	(see \Cref{thm:box}).
	Combined with small ball 
	estimates, also borrowed from \cite{CNg}
	(see \Cref{thm:smallball}),
	this leads to a comparison of probabilities between the Gaussian and general
	cases, culminating with \Cref{prop:compare}.
		\vskip .05in
	We note in passing that the local CLT borrowed
	from \cite{CNg}, arguably the technically 
	most challenging component used in our 
	proof, is in turn a significant generalization
	of a result from \cite{DNN} to the multivariate case.
\end{enumerate}

\subsection{Background}
We compare our result with existing 
literature.  Konyagin and Schlag \cite{konyagin,KSch}
showed that, for random Kac polynomials as 
general as in Theorem \ref{thm:main}, 
with high probability the complex roots of $f(z)$ 
cannot be within distance $o(1/n^2)$ of the unit circle. 
This is consistent with our theorem, 
but the method of \cite{KSch}, 
on the other hand, cannot be used to 
study asymptotic statistics. More relevant to our universal statistics,  \cite[Theorem 5.7]{TV} of Tao and Vu, or \cite[Theorem 2.3]{DONgV} of Do et 
al. established universality for the correlation functions
of complex roots of $f(z)$ within balls of radius $\Theta(1/n)$ near the unit circle; it seems that
the techniques and results there cannot be applied to our problem because the region under current consideration is at a finer scale and too close to the circle.

We already mentioned that  \Cref{thm:main_result_gaussians}, the
Gaussian case of \Cref{thm:main}, was already proved in \cite{MS},
using a method based on the higher
Kac--Rice formula, which seems ill-suited 
to computations in the non-Gaussian case. It does not seem
straightforward to apply the local CLT comparison
to their computation and obtain directly \Cref{thm:main} from their
\Cref{thm:main_result_gaussians}.

As discussed above, the method of proof of \Cref{thm:main} borrows from the
recent \cite{CNg,YZ}, that dealt with the variable $m_n^*=\min_{z\in \mathbb{S}^1} |f(z)|$; we refer to these articles
for historical background. (That the problems are related appears already in the work of Konyagin and Schlag \cite{KSch}.)
Step 1 in the sketch above is similar to the analogous step in
\cite{YZ}, with the variable $Z_\alpha$ here differing from the variable
$Z_\alpha$ in \cite{YZ,CNg} by a factor of $1/|f'(z_\alpha)|$. 
In the case
of $m_n^*$, with $\xi_i$ Gaussian,
the collection $(Z_{\alpha_j})_{j=1}^k$ 
for separated $z_{\alpha_j}$ has the property that writing
$\xi_i=\sqrt{1-\epsilon^2} \xi_i'+\epsilon \xi_i''$ with $\xi_i',\xi_i''$
independent and $\epsilon$ small ($n$ dependent), one has that $Z_{\alpha_j}=Z'_{\alpha_j}+n_j$
where $Z_{\alpha_j}'$ are the same as $Z_{\alpha_j}$ 
except that $\xi_i'$ are used instead of $\xi_i$, and 
the $n_j$ are (asymptotically) independent of each other and of the $(Z_{\alpha_j}')$. 
From this, an application of an 
invariance result due to Liggett \cite{liggett} (see also \cite{CGS}) gave the Poisson limit in
the Gaussian case. In the case considered in this paper, 
with the definition of $Z_\alpha$ as in 
\eqref{eq:solution_to_linear_system} and \eqref{eq:def_mu_f},
one can repeat the computation, 
but we cannot show a-priori that
the $n_j$'s are asymptotically independent
of the $Z_{\alpha_j}'$. Thus, Step 2 here differs significantly from
the proof in \cite{YZ} (which is used as basis for comparison in \cite{CNg}).

	\subsection*{Notation}
	We will assume $n\to\infty$ throughout the paper. We write $X=O(Y)$, $X\ll Y$ or $Y\gg X$ if $|X|\leq C|Y|$ for some absolute constant $C>0$ which does not depend on $n$. We write $X\asymp Y$ or $X=\Theta(Y)$ if $X\ll Y$ and $Y\ll X$. We write $X=o(Y)$ or $Y=\omega(X)$ if $X/Y\to 0$ as $n\to\infty$. In what follows, $\norm{\cdot}_{\bR/\bZ}$ is the distance to the nearest integer and $\dd m = \dd \mLeb$ is the Lebesgue measure. The volume of a direct product of intervals $Q=I_1\times\cdots\times I_d\subset \R^d$ is denoted $|Q|=|I_1|\times\cdots\times |I_d|$.
	We write
	$\BN(a,b)$ for the real Gaussian law with mean $a$ and variance $b$.
	
	For random variables $X$ and $Y$, we
	write $X\stackrel{\mbox{\scriptsize{\rm law}}}{=} Y$ if they are identically distributed. For a sequence of random variables $X_n$, we write $X_n\xrightarrow{\ d \ } X$ if
	$X_n$ converges in distribution to $X$ as $n\to\infty$. Finally, for 
	$N\in \bN$ we write $[N] := \left\{0,1,\ldots,N\right\}$.
	
	Sometimes we write $\bP_{\BN(0,1)}(\cdot), \bE_{\BN(0,1)}(\cdot)$ to emphasize that the model under consideration is (standard) gaussian; in the general case we will drop these subscripts.
	
	\section{Reduction to the unit circle}
	\label{sec:reduction_to_unit_circle}
	For $t\in [0,\pi]$\footnote{Because $f(\wb{z}) = \wb{f(z)}$, it suffices to work with the upper half plane.} we write
	\[
	\frac{1}{\sqrt{n}}f\left((1+\rho)e^{it}\right) = X(\rho,t) + i Y(\rho,t)
	\]
	where
	\begin{align*}
	&X(\rho,t) = \frac{1}{\sqrt{n}}\sum_{k=0}^{n} \xi_k (1+\rho)^k \cos(kt), \qquad Y(\rho,t) = \frac{1}{\sqrt{n}}\sum_{k=0}^{n} \xi_k (1+\rho)^k \sin(kt).
	\end{align*}
	On the unit circle we denote
	\begin{equation}
X(t) := X(0,t) \mbox{ and } Y(t):= Y(0,t).
\end{equation}

	For $S\subset \bR$, we define the point process (random measure) $\nu_f$ of ``close-roots" (counting multiplicity) by the formula 
	\begin{equation*}
	\nu_f(S):= \left|\left\{z\in Z(f) \mid n^2(|z|-1)\in S,\ \im(z)\ge 0 \right\}\right|.
	\end{equation*}
	In fact, Theorem \ref{thm:main_result_gaussians} exactly states that $\nu_f$ converges in distribution to a Poisson point process on $\bR$. As mentioned in the introduction, the goal of this section is to introduce a new point process that is equivalent to $\nu_f$ but is easier to work with -- this we do in what follows.
	
	We let
\begin{equation*}	\label{def:N}
N := \bigg\lfloor \frac{n^{2}}{\log^{K_0} n} \bigg\rfloor
\end{equation*}
for an absolute constant $K_0>8$ that will be taken sufficiently large over the course of the proof,
	and let
	\begin{equation}
	\label{eq:defintion_of_sample_pts}
	\ang_\al:= \frac{\pi \alpha}{N},\quad \alpha = 0,\ldots,N.
	\end{equation}
	Also set $I_\alpha := [\ang_\al-\pi/2N,\ang_\al+\pi/2N]$ to be a covering of $[0,\pi]$ by intervals of equal length. We consider annular domains 
	\[
	C_\al := \left\{z\in \bC  : (1-|z|)\in \left[-\frac{\log n}{n^2},\frac{\log n}{n^2}\right] , \ \text{arg}(z)\in I_\al \right\}.
	\]
	Since $\mLeb(C_\al)$ is small, we expect that $f(z) \approx f(e^{i\ang_{\al}}) + (z-e^{i\ang_{\al}}) f^{\prime}(e^{i\ang_{\al}})$ for all $z\in C_\al$. 
	(We will actually perform the linearization in the $(\rho,t)$-plane.)  In particular, this linear approximation will determine, with high probability, whether or not $f$ has a root inside the set $C_\al$, and will also give an approximation to its location if it exists.
	
	\subsection{Linear approximation}
	
	For every $\al\in [N]$ we sample the real and imaginary parts of $\frac1{\sqrt{n}}f$ and their radial derivatives at angle $\ang_\al$, i.e.
	\begin{equation}
\label{eq-XY}
\left(X(\ang_\al), Y(\ang_\al), X^\prime(\ang_\al), Y^\prime(\ang_\al) \right).
	\end{equation}
	By applying a $2$-dimensional Taylor approximation in the variable $z=(1+\rho)e^{it}$, we arrive at the following linear system:
	\begin{equation}
	\label{eq:system_of_two_equation_for_root}
		\begin{pmatrix}
		X(\rho,t) \\ Y(\rho,t)
		\end{pmatrix} = \begin{pmatrix}
		X(\ang_\al) \\ Y(\ang_\al)
		\end{pmatrix} + \begin{pmatrix}
		X^\prime(\ang_\al) & Y^\prime(\ang_\al) \\ Y^\prime(\ang_\al) & -X^\prime(\ang_\al)
		\end{pmatrix} \cdot\begin{pmatrix}
		t-\ang_\al \\ \rho
		\end{pmatrix} + \text{error},
	\end{equation}
	where we expect the error term to be small inside $C_\al$. 
	
	We will denote by 
	\begin{equation}
		\label{eq:def_of_linear_approximation}
		F_\al(t,\rho):= \begin{pmatrix}
		X(\ang_\al) \\ Y(\ang_\al)
		\end{pmatrix} + \begin{pmatrix}
		X^\prime(\ang_\al) & Y^\prime(\ang_\al) \\ Y^\prime(\ang_\al) & -X^\prime(\ang_\al)
		\end{pmatrix} \cdot \begin{pmatrix}
		t - \ang_\al \\ \rho
		\end{pmatrix}
	\end{equation}
	as the affine map that approximates $\frac{1}{\sqrt{n}}f$ 
	inside $C_\al$. Sometimes for convenience we also write $F_\al(z)$ for $F_\al(\ang,\rho)$.
	
	By setting $f(z) = 0$ and neglecting the error term, the system \eqref{eq:system_of_two_equation_for_root} gives a local candidate for the closest root by
	\begin{align}
		\label{eq:solution_to_linear_system}
		&\rho_\al := \frac{X^\prime(\ang_\al)Y(\ang_\al)-X(\ang_\al)Y^\prime(\ang_\al)}{X^\prime(\ang_\al)^2 + Y^\prime(\ang_\al)^2} 
		 \\\nonumber 
		&\dang_\al :=\ang_\al - \frac{X(\ang_\al)X^\prime(\ang_\al)+Y(\ang_\al)Y^\prime(\ang_\al)}{X^\prime(\ang_\al)^2 + Y^\prime(\ang_\al)^2} .
	\end{align}
	Indeed, it is straightforward to check that $F_\al\left(\dang_\al - \ang_\al , \rho_\al\right) = 0$, and so $\rho_\al$ given in \eqref{eq:solution_to_linear_system} is a linear prediction for the radial position of the closest root to the point $e^{i\ang_\al}$ based on the values of $X(\theta_\al),Y(\theta_\al),X'(\theta_\al),Y'(\theta_\al)$.
	
	Now, we define a new, thinned point process $\mu_f$ which retains points $\rho_\al$ provided that some good event holds. Set
	\begin{align}
	\label{eq:defA}
	\cA_\al := &\cA_\al^\prime \cap \cA_\al^\dprime,\qquad
	\mbox{\rm where} \nonumber \\
	&\cA_\al^\prime := \left\{\dang_\al \in I_\al, \ n^2|\rho_\al| \leq \log n \right\} = \left\{(1+\rho_\al)e^{i\dang_\al} \in C_\al \right\} 
	\\ & \cA_\al^\dprime: =\left\{ |X(\ang_\al)|,|Y(\ang_\al)| \leq n^{-2/3}, \ |X^\prime(\ang_\al)|,|Y^\prime(\ang_\al)| \in[ n\log^{-2K_0}n,n\log^2n] \right\}.\nonumber
	\end{align}	
	The event $\cA_\al^\prime$ implies that the linear approximation predicts a root inside $C_\al$, while the event $\cA_\al^\dprime$ is just typical for such a prediction and tells us that the linear system \eqref{eq:solution_to_linear_system} is non-degenerate. 
	
	 With the above notations, we now define our approximating point process as
	\begin{equation}
	\label{eq:def_mu_f}
	\mu_f:= \sum_{\al =0}^{N} \delta_{Z_\al} \qquad \text{where}\quad Z_\al:= n^2\rho_\al \cdot \mathbf{1}_{\cA_\al} + \infty \cdot \mathbf{1}_{\cA_\al^c}.
	\end{equation}
	And so, $\mu_f$ is a point process in $\bR$ whose values are determined only by the samples of $X,Y,X',Y'$ on the points $\{\ang_\al\}_{\al= 0}^{N}$. 
	We will show that $\mu_f$ serves as a good approximation to $\nu_f$. In particular, our argument shows that for any compact interval $U\subset \bR$,
\begin{equation}	\label{PPequiv.gauss}
		\lim_{n\to\infty} \bP_{\BN(0,1)}\left(\mu_f(U) = \nu_f(U) \right) = 1.
\end{equation}
\subsection{Smooth points} To extend this comparison to general coefficients we will need to remove points $Z_\al$ coming from certain ``bad arcs''. 

\begin{definition}[Smooth points]
\label{def:smooth}
For $\Smooth>0$, we say a point $t\in [0,\pi]$ is \emph{$\Smooth$-smooth} if 
\[
\Big\| \frac{p_0 t}{\pi }\Big\|_{\R/\Z} > \frac{\Smooth}{n}\qquad \forall\; p_0\in [1,K+1]\cap \Z .
\]
We say a tuple $(t_1,\dots, t_\mom)$ is $\Smooth$-smooth if $t_r$ is $\Smooth$-smooth for each $1\le r\le \mom$.
\end{definition}

Letting
\begin{equation}	\label{def:badarcs}
\badarcs = \Big\{ t\in [0,\pi]: \text{ $t$ is not $n^{\kappa}$-smooth} \Big\}
\end{equation}
for some sufficiently small $\kappa>0$ that we choose later, we note that $\mLeb(\badarcs) = O(n^{2\kappa-1})$. We now define modified point processes	
\begin{equation}
	\label{eq:def_mu_f_sharp}
	\mu_f^\sharp:= \sum_{\al: \ang_\al \notin  \badarcs} \delta_{Z_\al}\,, \qquad\qquad \mu_f^\flat:= \mu_f -\mu_f^\sharp
	\end{equation}
and 
	\begin{equation}
	\label{eq:def_nu_f}
	\nu_f^\sharp:= \sum_{\al:\theta_\al\notin\badarcs} \sum_{z\in Z(f)\cap C_\al} \delta_{n^2(|z|-1)}\,,  
	\qquad\qquad \nu_f^\flat := \nu_f - \nu_f^\sharp.  	
	\end{equation}
The following lets us focus on potential angles that are smooth. 

\begin{prop}[Ruling out bad arcs]\label{prop:exceptional}
With probability $1-O(n^{-\kappa/4})$ we have 
\begin{equation}	\label{badarcs.i}
\mu_f^\flat(\R) =0.
\end{equation}
Furthermore,
\begin{equation}	\label{badarcs.ii}
\bP\Big(\, \exists \ang \in \badarcs, z\in Z(f) : |z - e^{i \ang}| \le n^{-3/2}\Big)=O(n^{-\kappa/4}).
\end{equation}
In particular, with probability $1-O(n^{-\kappa/4})$, 
$$\nu_f^\flat(\R)=0.$$	
\end{prop}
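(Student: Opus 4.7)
My plan is to reduce both \eqref{badarcs.i} and \eqref{badarcs.ii} to small-ball estimates for the pair $(X(\ang),Y(\ang))$ and its derivatives, taken uniformly over a suitably dense net in $\badarcs$. The rational-approximation structure built into \Cref{def:smooth} gives the decomposition $\badarcs=\bigcup_{p=1}^{\Smooth+1}B_p$ with $B_p:=\{t\in[0,\pi]:\|pt/\pi\|_{\R/\Z}\le \Smooth/n\}$ and $\Smooth=n^\kappa$. Since each $B_p$ is a union of at most $p+1$ arcs of length $O(\Smooth/(pn))$, we have $\mLeb(\badarcs)=O(\Smooth^2/n)=O(n^{2\kappa-1})$, and hence
\begin{equation*}
M:=\#\{\al:\ang_\al\in\badarcs\}=O\bigl(N\cdot n^{2\kappa-1}\bigr)=O\bigl(n^{1+2\kappa}/\log^{K_0}n\bigr).
\end{equation*}

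For \eqref{badarcs.i}, I aim to show $\bP(\cA_\al)=O(n^{-4/3+o(1)})$ uniformly over $\al$ with $\ang_\al\in\badarcs$. The event $\cA_\al$ contains the two-dimensional small-ball constraint $|X(\ang_\al)|,|Y(\ang_\al)|\le n^{-2/3}$ and the derivative lower bounds $|X'(\ang_\al)|,|Y'(\ang_\al)|\ge n\log^{-2K_0}n$. I split into two regimes. If $\dist(\ang_\al,\{0,\pi\})\ge n^{-1/2}\log^{2K_0}n$, the Gram matrix of $\{(\cos k\ang_\al,\sin k\ang_\al)\}_{k=0}^n$ has both eigenvalues of order $n$, and applying \Cref{thm:smallball} to $(X(\ang_\al),Y(\ang_\al))$ (or a direct 2D Esseen-type bound from sub-Gaussianity) gives $\bP(|X(\ang_\al)|,|Y(\ang_\al)|\le n^{-2/3})=O(n^{-4/3+o(1)})$. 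If instead $\ang_\al$ lies within $n^{-1/2}\log^{2K_0}n$ of $\{0,\pi\}$, the variance of $X'(\ang_\al)$ (or $Y'(\ang_\al)$ near $\pi$) is much smaller than $(n\log^{-2K_0}n)^2$, so a sub-Gaussian tail bound rules out $\cA_\al''$ with probability $\exp(-n^{\Omega(1)})$. A union bound over the $M$ sample points then yields $\bP(\mu_f^\flat(\R)>0)=O(n^{-1/3+2\kappa+o(1)})$, which is $O(n^{-\kappa/4})$ provided $\kappa$ is small enough.

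For \eqref{badarcs.ii} I convert the presence of a nearby zero into a small-ball event at a net point. On the high-probability event $\{\max_{|w|\le 1+1/n}|f'(w)|\le n^{3/2+o(1)}\}$, which follows from sub-Gaussian tails together with a union bound over a fine net of $w$, any $z\in Z(f)$ with $|z-e^{i\ang}|\le n^{-3/2}$ forces $|X(\ang)|^2+|Y(\ang)|^2\le n^{-1+o(1)}$. Covering $\badarcs$ by a net of spacing $n^{-3/2}$ (cardinality $O(n^{1/2+2\kappa})$) and using the same derivative bound to pass between the net and all of $\badarcs$, the dichotomy above applies verbatim: at typical net points the 2D small ball gives $O(n^{-1+o(1)})$, while highly degenerate net points are ruled out by the derivative bound. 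The union bound yields $O(n^{-1/2+2\kappa+o(1)})$, proving \eqref{badarcs.ii}. The final assertion $\nu_f^\flat(\R)=0$ is then immediate: any $z\in Z(f)\cap C_\al$ with $\ang_\al\in\badarcs$ satisfies $|z-e^{i\ang_\al}|=O(\log^{K_0}n/n^2)\ll n^{-3/2}$, so the event controlled by \eqref{badarcs.ii} already contains $\{\nu_f^\flat(\R)\ne 0\}$.

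The main obstacle I anticipate is uniform small-ball control at \emph{all} angles in $\badarcs$: the coefficient vectors $(\cos k\ang,\sin k\ang)$ degenerate near $\{0,\pi\}$, so \Cref{thm:smallball} does not apply off the shelf. I handle this by absorbing the degenerate regime into the derivative non-degeneracy in $\cA_\al''$ in part (i) and into the high-probability upper bound on $|f'|$ in part (ii). The remaining numerology is to choose $\kappa$ small enough that the $n^{2\kappa}$ loss from counting bad sample points is beaten by the $n^{-4/3}$ (resp.\ $n^{-1}$) gain from the 2D small ball, while still satisfying the target decay $n^{-\kappa/4}$.
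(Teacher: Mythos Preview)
Your dichotomy for the near-edge case is miscalibrated, and this breaks both parts. For (i) you claim that if $\theta_\al$ lies within $n^{-1/2}\log^{2K_0}n$ of $\{0,\pi\}$ then $\Var(X'(\theta_\al))\ll (n\log^{-2K_0}n)^2$. But $\Var(X'(t))=\frac1n\sum_k k^2\sin^2(kt)$ is already $\asymp n^2/6$ once $t\gg 1/n$ (the points $\{kt\}_{k\le n}$ spread over an interval of length $nt\gg1$, so $\sin^2(kt)$ averages to $1/2$). The bound you need only holds for $t=o(n^{-1}\log^{-2K_0}n)$, and for $t$ in the wide intermediate range $[n^{-1}\log^{-2K_0}n,\,n^{-1/2}\log^{2K_0}n]$ neither branch of your dichotomy applies. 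For (ii) you assert the dichotomy carries over verbatim, but your near-edge mechanism in (i) used the lower bound on $|X'|$ built into $\cA_\al''$; the event in (ii) has no such constraint, so you have no argument there at all. A secondary issue: \Cref{thm:smallball} requires $n^\kappa$-smoothness, which points of $\badarcs$ lack by definition, so it cannot be invoked; your fallback ``direct 2D Esseen-type bound'' is exactly \Cref{lemma:smallball:edge}, which you should cite and whose $\lambda^{-2}$ loss you should track.

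The paper's route avoids these problems by reducing both (i) and (ii) to the single estimate $\bP(\min_{t\in\badarcs}|f(e^{it})|\le\log^4 n)\ll n^{-\kappa/4}$, and then handling the near-edge region $[0,n^{-1-\kappa}]\cup[\pi-n^{-1-\kappa},\pi]$ via the one-dimensional Berry--Esseen bound $\bP(|f(e^{i\theta})|<\log^6 n)\ll \log^6 n/\sqrt n$ (valid at \emph{every} $\theta$ since $\Var(\Re f(e^{i\theta}))\asymp n$), together with a net of spacing $n^{-3/2-\kappa/2}$. Away from the edge it uses \Cref{lemma:smallball:edge} with $\lambda=n^{-\kappa}$. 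Your overall strategy is close in spirit, and can be repaired by replacing the variance-of-$X'$ argument with this 1D small-ball near the edge; but as written the threshold and the near-edge mechanism are incorrect.
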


We postpone the proof of \Cref{prop:exceptional} to \Cref{sec:exceptional}. 
We can now state the modified version of the comparison \eqref{PPequiv.gauss} allowing us to pass to the point process $\mu_f^\sharp$ in the general case.

\begin{proposition}[Passage to the unit circle]
		\label{prop:PPequivalence}
		For any interval $U\subset \bR$ we have
		\[
		\lim_{n\to\infty} \bP\left(\mu_f^\sharp(U) = \nu_f^\sharp(U) \right) = 1.
		\]
\end{proposition}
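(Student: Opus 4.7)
The plan is to show that, on an event of probability $1-o(1)$, the following two properties hold for every $\al$ with $\ang_\al\notin\badarcs$:
\emph{(i)} on $\cA_\al$, the polynomial $f$ has exactly one zero $z_*$ in $C_\al$, and $n^2(|z_*|-1)=Z_\al+o(1)$;
\emph{(ii)} on $\cA_\al^c$, $f$ has no zero in $C_\al$.
Granted (i) and (ii), any discrepancy between $\mu_f^\sharp(U)$ and $\nu_f^\sharp(U)$ for an interval $U$ with fixed endpoints must come from some $Z_\al$ lying within $o(1)$ of $\partial U$, which is ruled out by a small-ball estimate on $n^2\rho_\al$ at smooth angles, combined with a union bound over $\al$.

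The first input is a global bound on the derivatives of $f$: using sub-Gaussian concentration at a polynomially fine net on $[0,\pi]$ together with a crude modulus-of-continuity estimate, with probability $1-o(1)$,
\[
\sup_{t\in[0,\pi]}\Big|\frac{1}{\sqrt n}f^{(j)}(e^{it})\Big|\le n^{j}\log^{C}n,\qquad j=1,2.
\]
On $\cA_\al$, the linearization $L_\al(z):=\frac{1}{\sqrt n}f(e^{i\ang_\al})+\frac{1}{\sqrt n}f'(e^{i\ang_\al})(z-e^{i\ang_\al})$ has a unique zero $z_\al^\star$ agreeing with $(1+\rho_\al)e^{i\dang_\al}$ up to $O((\log n)^{2K_0}/n^4)$ (the nonlinearity of $(\rho,t)\mapsto(1+\rho)e^{it}$), and lies in $C_\al$ by $\cA_\al'$. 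Taylor and the derivative bound give
\[
\Big|\frac{1}{\sqrt n}f(z)-L_\al(z)\Big|\le Cn^2\log^{C}n\cdot|z-e^{i\ang_\al}|^2\quad(z\in C_\al),
\]
while $\cA_\al''$ forces $|L_\al(z)|\ge cn\log^{-2K_0}n\cdot|z-z_\al^\star|$ for all $z$. Taking $\delta:=\log^{C'}n/n^3$ with $C'>4K_0+C$, Rouch\'e on the circle $|z-z_\al^\star|=\delta$ produces exactly one zero of $f$ in $B(z_\al^\star,\delta)$, and the same two estimates force $|f|>0$ on $C_\al\setminus B(z_\al^\star,\delta)$. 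This proves (i), with $n^2(|z_*|-1)=n^2\rho_\al+O(n^2\delta)=Z_\al+o(1)$.

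For (ii), decompose $\cA_\al^c=(\cA_\al')^c\cup(\cA_\al'')^c$. If $\max(|X(\ang_\al)|,|Y(\ang_\al)|)>n^{-2/3}$, the derivative bound and Taylor force $|\frac{1}{\sqrt n}f(z)|\ge \frac12 n^{-2/3}$ throughout $C_\al$. If some first derivative exceeds $n\log^2n$, sub-Gaussian tails give $\exp(-\omega(\log^4 n))$ per $\al$, summable over $\al$. The remaining sub-cases --- some first derivative anomalously small ($<n\log^{-2K_0}n$), or $\cA_\al''$ holds but $z_\al^\star$ just misses $C_\al$ --- are coupled to the existence of a zero of $f$ in $C_\al$: such a zero, together with the derivative bound, forces $(X(\ang_\al),Y(\ang_\al))$ into a two-dimensional window of diameter $O(\log^{K_0+C}n/n)$, in addition to the one-dimensional small-derivative or boundary-miss constraint of size $O(n\log^{-2K_0}n)$. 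The multivariate small-ball estimate \Cref{thm:smallball} applied to $(X,Y,X',Y')(\ang_\al)$ at the smooth angle $\ang_\al$ bounds this joint event by $O(\log^{C''}n/n^2)$ per $\al$, which sums over the $N\sim n^2/\log^{K_0}n$ smooth indices to $o(1)$ provided $K_0$ is chosen large enough relative to $C, C', C''$.

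The principal obstacle is the last family of sub-cases: one needs a single joint small-ball estimate that simultaneously captures the existence of a zero of $f$ in $C_\al$ \emph{and} the failure of the defining event $\cA_\al$, with enough decay to absorb the union bound over $\sim n^2/\log^{K_0}n$ smooth indices. This is precisely the role of the multivariate, universal bound \Cref{thm:smallball} on smooth angles, and hence of the bad-arc exclusion \Cref{prop:exceptional}; in the Gaussian case this entire step collapses to the standard 4D Gaussian density estimate, giving \eqref{PPequiv.gauss}.
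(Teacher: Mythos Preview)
Your Rouch\'e-based strategy is close in spirit to the paper's argument (both rest on the linearization \eqref{eq:system_of_two_equation_for_root} and small-ball input from \Cref{thm:smallball}), and your treatment of the sub-cases of $(\cA_\al'')^c$ is essentially the paper's. The main difference is organizational: the paper never attempts to establish your bijection (i)+(ii), but instead bounds the two discrepancy events $\{\mu_f^\sharp(U)>\nu_f^\sharp(U)\}$ and $\{\mu_f^\sharp(U)<\nu_f^\sharp(U)\}$ directly, using that on $\cA_\al''\cap\cG$ one has $U_\al^-\subset F_\al^{-1}(\tfrac1{\sqrt n}f(U_\al))\subset U_\al^+$ for an $O(n^{-5/2})$-thickening $U_\al^\pm$ of $U_\al$ (\Cref{claim:symmetric_difference_is_small}), together with a separate ``no two zeros in $C_\al$'' claim (\Cref{claim:no_two_roots_inside_annular_domain}). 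This buys them exactly the angular-boundary robustness that your setup lacks.

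Concretely, your (i) is not delivered by the Rouch\'e step: on $\cA_\al\cap\cG$ you only get a unique zero in $B(z_\al^\star,\delta)$ and none in $C_\al\setminus B(z_\al^\star,\delta)$, so if $\tau_\al$ sits within $O(\delta)$ of $\partial I_\al$ the zero may land in $C_{\al\pm1}$ rather than $C_\al$. In that scenario either (ii) fails at $\al\pm1$ or both $\cA_\al$ and $\cA_{\al\pm1}$ hold, and $\mu_f^\sharp$ double-counts a single zero; neither is captured by your $\partial U$ boundary argument, which only sees the \emph{radial} coordinate. The fix is cheap --- a union bound over $\al$ on the event $\{\cA_\al,\ \dist(\tau_\al,\partial I_\al)\le \delta\}$, which has probability $O(N\delta/N)=O(\delta)$ per $\al$ by \Cref{prop:domain.UB} and sums to $o(1)$ (this is close to \Cref{lemma:good_events_are_well_separated}(1)) --- but it needs to be said. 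A smaller imprecision: your ``boundary-miss'' sub-case in (ii) is not a one-dimensional constraint of width $n\log^{-2K_0}n$; on $\cA_\al''\cap\cG$ the existence of a zero in $C_\al$ forces $z_\al^\star$ into an $O(\log^{O(1)}n/n^{3})$-neighborhood of $\partial C_\al$, which is a much thinner (and hence easier) event than you claim.
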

	For $x\in \bR$ and $k\ge1$ denote
	\[
	(x)_k :=\max\{0, x(x-1)\cdot \ldots \cdot (x-k+1) \}.
	\] 
	Recall that for a random variable $Z$ which has $\text{Poisson}(\lambda)$ distribution we have $\bE[(Z)_k] = \lambda^k$ for all $k\ge1$. The following states that the factorial moments for $\mu_f^\sharp$ match those of a Poisson process in the limit.
	\begin{prop}[Moments]
		\label{prop:moments:sum}
		Let $U\subset \bR$ be a finite union of compact intervals. Then
		\begin{equation}\label{eq:factorial_moment_of_mu_f_k}
		\lim_{n\to\infty} \bE\left[\left(\mu_f^\sharp(U)\right)_{k}\right] = \left(\frac{|U|}{12}\right)^{k} \qquad \text{for all}  \ k\ge 1.		\end{equation}	
	\end{prop}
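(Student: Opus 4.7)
\emph{Plan.} The $k$-th factorial moment expands as
\begin{equation*}
\bE\bigl[(\mu_f^\sharp(U))_k\bigr] = \sum \bP\Big(\bigcap_{j=1}^k \{\cA_{\al_j},\; n^2\rho_{\al_j}\in U\}\Big),
\end{equation*}
summed over ordered $k$-tuples of distinct indices $\al_1,\dots,\al_k\in[N]$ with $\ang_{\al_j}\notin \badarcs$ for each $j$. I would organize the analysis into three steps matching the outline of Section~\ref{sec:proof_method}.

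First I would restrict the sum to tuples satisfying $|\ang_{\al_i}-\ang_{\al_j}|\ge n^{-1+\eps}$ for all $i\neq j$. By \Cref{lemma:good_events_are_well_separated}, the events $\cA_{\al_i}$ and $\cA_{\al_j}$ are nearly incompatible when $\ang_{\al_i},\ang_{\al_j}$ are $n^{-1+\eps}$-close, so the contribution of tuples with at least one close pair is $o(1)$ after union-bounding over the $O(N^k)$ possibilities. The second step is universality: for a separated smooth tuple, the joint probability is (up to the second-order Taylor error suppressed by $\cA^\prime_{\al_j}$) an integral of the joint density of the $4k$-dimensional vector
\begin{equation*}
W := \bigl(X(\ang_{\al_j}), Y(\ang_{\al_j}), X^\prime(\ang_{\al_j}), Y^\prime(\ang_{\al_j})\bigr)_{j=1}^k
\end{equation*}
over a product region whose first $2k$ coordinates have scale $O(1/n)$ and last $2k$ have scale $\Theta(n)$. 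Applying the quantitative local CLT \Cref{thm:box} together with the small-ball bound \Cref{thm:smallball} (cf.\ \Cref{prop:compare}), I would replace this density by the Gaussian one uniformly over the region, with a multiplicative error that remains negligible after summation. This reduces the entire problem to the Gaussian case.

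In the third step I would carry out the Gaussian computation. For well-separated tuples the off-diagonal $4\times 4$ blocks of $\Cov(W)$ are of size $O(n/|\ang_{\al_i}-\ang_{\al_j}|)$, negligible compared to the diagonal blocks of order $n^2$; the decorrelation estimate \Cref{lem:decorr} would then yield
\begin{equation*}
\bP\Big(\bigcap_j \{\cA_{\al_j},\,n^2\rho_{\al_j}\in U\}\Big) = \prod_{j=1}^k \bP\bigl(\cA_{\al_j},\,n^2\rho_{\al_j}\in U\bigr)\cdot(1+o(1)).
\end{equation*}
For a single $\al$ I would change variables $(X(\ang_\al), Y(\ang_\al))\mapsto (u,s) := (n^2\rho_\al,\ang_\al-\dang_\al)$ at fixed $(X^\prime(\ang_\al), Y^\prime(\ang_\al))$; a direct calculation gives Jacobian determinant $D/n^2$ with $D:=X^\prime(\ang_\al)^2+Y^\prime(\ang_\al)^2$, so the density of $(u,s)$ at the origin equals $p_{X(\ang_\al),Y(\ang_\al)}(0,0)\cdot\bE[D\,|\,X(\ang_\al)=Y(\ang_\al)=0]/n^2$. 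The crucial arithmetic is that the cross-covariances $\Cov(X(\ang_\al),Y^\prime(\ang_\al))\approx n/4$ and $\Cov(Y(\ang_\al),X^\prime(\ang_\al))\approx -n/4$ are \emph{not} negligible, and they force the conditional expectation $\bE[D\,|\,X(\ang_\al)=Y(\ang_\al)=0]\sim n^2/12$ (rather than the unconditional $n^2/3$). A standard computation then yields $\bP(\cA_\al,\,n^2\rho_\al\in U)\sim |U|/(12N)$, and summing over the $N+1$ angles and raising to the $k$-th power produces $(|U|/12)^k$.

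The hard part will be the second step: the quantitative multivariate local CLT \Cref{thm:box}, imported from \cite{CNg}, is the technical heart of the argument, and its required uniformity over the relevant product region is precisely what forces the smoothness condition on $\ang_\al$ (\Cref{def:smooth}) and the exclusion of the bad arcs $\badarcs$ from the definition of $\mu_f^\sharp$. Verifying that the cumulative $o(1)$ errors in Steps~1 and 3 remain negligible after summing over $\sim N^k$ tuples also requires some bookkeeping.
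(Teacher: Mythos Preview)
Your overall strategy is the same as the paper's: expand the factorial moment, separate off the non-spread tuples, and for the spread tuples invoke \Cref{prop:moments} (whose proof is precisely your Steps~2 and 3, via \Cref{prop:compare} and the single-point Gaussian computation). Your change-of-variables computation in Step~3 is a clean alternative to the paper's direct conditioning in \Cref{lem:gaussian.computation} and gives the same $|U|/(12N)$.

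There is, however, a real gap in Step~1. \Cref{lemma:good_events_are_well_separated} does \emph{not} say that $\cA_{\al_i}\cap\cA_{\al_j}$ is ``nearly incompatible'' for all pairs with $|\ang_{\al_i}-\ang_{\al_j}|\le n^{-1+\eps}$. Part~(2) only rules out separations in the window $(\pi/N,\,n^{-1}\log^{-4K_0}n]$; it says nothing about adjacent indices $\al'=\al\pm1$, nor about separations in $(n^{-1}\log^{-4K_0}n,\,n^{-1+\eps}]$. The adjacent case is the dangerous one: there are $\Theta(N^{k-1})$ tuples containing a single adjacent pair, and if you only bound $\bP(\cE(\boldsymbol\al))$ by the probability for the $k-1$ distinct base points (which is $\asymp N^{-(k-1)}$), the sum is $O(1)$, not $o(1)$. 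A naive union bound over $O(N^k)$ tuples does not close this.

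The paper handles this by splitting the non-spread tuples into two classes. After using part~(2) of \Cref{lemma:good_events_are_well_separated} to reduce to tuples consisting of $k-\ell$ base points separated by at least $n^{-1}\log^{-4K_0}n$ together with $\ell$ adjacent companions, one treats: (i) tuples where some two base points are still within $n^{-1+\eps}$ --- here the count is only $O(N^{k-\ell}/n^{1-\eps})$, and the small-ball bound \Cref{prop:domain.UB} (applicable since the base points are $\log^{-4K_0}n$-spread) gives probability $N^{-(k-\ell)}\log^{O(k)}n$, so the sum is $o(1)$; (ii) tuples with $\ell\ge1$ and all base points $n^\eps$-spread --- here part~(1) of \Cref{lemma:good_events_are_well_separated} forces $|\dang_{\al_j}-\ang_{\al_j}|$ into an interval of length $\pi/(2N\log^{K_0}n)$ for each adjacent pair, and feeding this into \Cref{prop:domain.UB} yields an extra factor $(\log^{8}n/\log^{K_0}n)^{k-\ell}=o(1)$ provided $K_0>8$. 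You will need to supply both of these arguments; Step~1 as written does not.
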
	
	Assuming that Propositions \ref{prop:exceptional}, \ref{prop:PPequivalence} and \ref{prop:moments:sum} holds we can prove our main result.
	\begin{proof}[Proof of \Cref{thm:main}]
		By Propositions \ref{prop:exceptional} and \ref{prop:PPequivalence} it suffices to show that $\mu_f^\sharp$ has a limiting Poisson distribution as $n\to\infty$. Indeed, Proposition \ref{prop:moments:sum} together with the classical moment theorem \cite[Theorem~3.3.26]{durrett} implies that for any finite union of compact intervals $U\subset \bR$
		\[
		\mu_f^\sharp\left(U\right) \xrightarrow{\ d \ } \text{Poisson}\left(\frac{|U|}{12}\right).
		\]
		Together with a theorem of Kallenberg \cite[Theorem~4.7]{kallenberg}, this implies that the sequence of point processes $\mu_f^\sharp$ converge in the vague topology to a Poisson point process of intensity $1/12$, as desired.
	\end{proof}
	\subsection{Joint distribution over spread points}
	Expanding the factorial moments \eqref{eq:factorial_moment_of_mu_f_k} leads us to consider the joint events that $k$ different samples of our polynomial $f$ on the unit circle contributed a point to $\mu_f^\sharp$ which lie in $U$. Since we already imposed a smoothness assumption on the sample points, to compute the desired probabilities we will require all of the points to be separated from one another, in the following sense:
	\begin{definition}[Spread tuples]
		\label{def:spread}
		We say that $\bs t = (t_1,\dots,t_k)\in [0,\pi]^k$ is \emph{$\gamma$-spread} if
		\begin{equation}
		|t_i-t_j|\ge \frac{\spread}{n}
		\qquad \forall \, -1\le i<j \le k
		\end{equation}
		where we put $t_{0}:= 0$, $t_{-1}:= \pi$.
		(We note that this is different from the definition in \cite{CNg} for general $(t_1,\dots, t_k)\in \R^k$, but when specialized to $[0,\pi]^k$ they are equivalent up to modification of $\gamma$ by a constant factor.)
		Note this definition includes $k=1$, with a single point $t\in[0,\pi]$ being $\gamma$-spread if it is distance at least $\gamma/n$ from $0$ and $\pi$. In particular, if $t$ is $1$-smooth then it is 1-spread. 
	\end{definition}
	The main step towards the proof of \Cref{prop:moments:sum} is the following:
	\begin{prop}[Main term]
		\label{prop:moments}
		Let $U\subset \bR$ be a finite union of compact intervals and fix a $k$-tuple of indices $(\al_1,\ldots\al_k) \in [N]^k$. Assume that for some $\kappa,\eps>0$ the tuple $(\ang_{\al_1},\ldots,\ang_{\al_k})$ is $n^\kappa$-smooth and $n^\eps$-spread, where $\ang_{\al}$ is given by \eqref{eq:defintion_of_sample_pts}. Then,
		\begin{equation*}
			\bP\left(Z_{\al_1}\in U, \ldots,\ Z_{\al_k}\in U \right) = \left(\frac{|U|}{12 N}\right)^{k} + o\left(N^{-k}\right)
		\end{equation*}
		where the rate of convergence depends on $k,\kappa,\eps$ and $K_0$.
		\end{prop}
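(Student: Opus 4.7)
The plan is to recast the event $\{Z_\al\in U\}$ as an event on the $4$-vector $W(\ang_\al):=(X(\ang_\al),Y(\ang_\al),X'(\ang_\al),Y'(\ang_\al))$, then compute the resulting joint probability first in the Gaussian model and transfer the answer to the general one via the quantitative local CLT. For each $\al$, $\{Z_\al\in U\}\cap\cA_\al=\{W(\ang_\al)\in R_\al\}$, where $R_\al\subset \R^4$ is the set cut out by the magnitude constraints of $\cA_\al^\dprime$ together with $\ang_\al-(xx'+yy')/(x'^2+y'^2)\in I_\al$ and $n^2(x'y-xy')/(x'^2+y'^2)\in U$. For fixed $(x',y')$ in the prescribed range, the affine change of variables $(x,y)\mapsto(\dang_\al,\rho_\al)$ has constant Jacobian $(x'^2+y'^2)^{-1}$, so in ``root coordinates'' the transverse slice of $R_\al$ is the rectangle $I_\al\times(U/n^2)$ of widths $\pi/N$ and $|U|/n^2$, making $R_\al$ a thin set in $\R^4$ of Lebesgue measure $\asymp |U|/N$.

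In the Gaussian case, $(W(\ang_{\al_i}))_{i=1}^k$ is centered Gaussian in $\R^{4k}$. A Dirichlet-kernel computation shows that, after rescaling $X',Y'$ by $1/n$, the covariance of a single $W(\ang_\al)$ tends to an $\al$-independent positive-definite $4\times 4$ matrix, while the cross-covariance between $W(\ang_{\al_i})$ and $W(\ang_{\al_j})$ is $O(1/(n|\ang_{\al_i}-\ang_{\al_j}|))=O(n^{-\eps})$ under the $n^\eps$-spread hypothesis. Hence the joint density factorizes, $p_{W^{\otimes k}}(\bw_1,\dots,\bw_k)=\bigl(1+O(n^{-\eps})\bigr)\prod_{i=1}^k p_W(\bw_i)$, uniformly on the sets of interest, and the problem reduces to $k=1$. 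For $k=1$, after the change of variables above and a Taylor expansion of $p_W$ at $(x,y)=(0,0)$ on the thin slab, a direct computation against the explicit Gaussian density (matching the Kac--Rice intensity calculation of \cite{ibragimov_zeitouni} restricted to the upper half plane, which accounts for the halving of the intensity from $1/6$ to $1/12$) yields $\bP_{\BN(0,1)}(Z_\al\in U)=|U|/(12N)+o(N^{-1})$. Multiplying across the nearly independent factors gives the Gaussian version of the proposition.

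For universality, partition $R_{\al_1}\times\cdots\times R_{\al_k}\subset\R^{4k}$ into translates of a box $B$ of suitably chosen small scale and apply the quantitative local CLT \Cref{thm:box} to $(W(\ang_{\al_1}),\dots,W(\ang_{\al_k}))$ on each translate. Under the $n^\kappa$-smoothness and $n^\eps$-spread hypotheses, this replaces each box probability by $(1+o(1))$ times its Gaussian counterpart with an additive error whose sum over all relevant translates is $o(N^{-k})$. Reassembling the translates via the Gaussian computation above concludes the proof.

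The main obstacle is the universality step: the set $R_\al$ is thin (of transverse width $|U|/n^2$ in one direction), and the box $B$ must be chosen small enough that the local CLT multiplicative error dominates on each box, yet large enough that the total additive error from \Cref{thm:box} summed over $\asymp |R_{\al_1}\times\cdots\times R_{\al_k}|/|B|$ boxes is still $o(N^{-k})$. Secondary difficulties are (i) controlling the boundary of $\cA_\al^\dprime$ to avoid near-degeneracy of the change of variables, which requires the small-ball input \Cref{thm:smallball} to rule out atypically small $|X'(\ang_\al)|,|Y'(\ang_\al)|$; and (ii) absorbing the $O(k^2)\cdot n^{-\eps}$ Gaussian decorrelation error, which constrains the choice of $\eps$ in terms of $k$ and $K_0$.
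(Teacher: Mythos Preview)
Your proposal is correct and follows essentially the same route as the paper: express $\{Z_\al\in U\}$ as $\{W(\ang_\al)\in \cD\}$ for a thin, polynomially-smooth domain in $\R^4$, do the single-point Gaussian computation (the paper's \Cref{lem:gaussian.computation}), decorrelate the $k$ points via the near-block covariance (the paper's \Cref{lem:decorr}), and transfer to the general model by a box covering and \Cref{thm:box}. The paper simply packages the last two steps into \Cref{prop:compare}, and handles the $(\cA_\al'')^c$ boundary by the same Gaussian computation with a small radius cutoff rather than invoking \Cref{thm:smallball} separately.

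One remark on your ``main obstacle'': the balancing you describe is not actually delicate. The error in \Cref{thm:box} is $\ll n^{-1/2}|Q|$ per box, i.e.\ already proportional to box volume, so summing over a cover of $\prod_i R_{\al_i}$ gives $\ll n^{-1/2}\prod_i m(R_{\al_i})=o(N^{-k})$ regardless of the box scale; only the boundary contribution depends on the scale, and this is what the $(K,L)$-good property controls. Similarly, your concern (ii) is harmless: the domains $\cD$ are supported in $B(0,n^{o(1)})$, so the decorrelation error from \Cref{lem:decorr} is $O(n^{-\eps+o(1)})$ for any fixed $\eps>0$, with no constraint linking $\eps$ to $k$ or $K_0$.
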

	We will prove Proposition \ref{prop:moments} in Section \ref{sect:moments}, and with it conclude Proposition \ref{prop:moments:sum} in Section \ref{sect:moments:sum}. Furthermore, some of the tools which we develop in Section \ref{sect:moments} will be helpful for us to prove Proposition \ref{prop:PPequivalence}, which is given in Section \ref{sect:PPequivalence}.

\begin{remark}[Gaussian case]
\label{rmk:gaussian.case}
For the Gaussian case we can skip some steps above and in the proofs of the assumed Propositions \ref{prop:exceptional}, \ref{prop:PPequivalence}, \ref{prop:moments:sum} and \ref{prop:moments}, and do not need all of the tools gathered in \Cref{sec:prelim}. 
In particular:
\begin{itemize}
\item One does not need to modify the processes in \eqref{eq:def_mu_f_sharp}, \eqref{eq:def_nu_f} -- one only needs to remove points coming from a small neighborhood of the real line. This means we only need an easier version of \Cref{prop:exceptional} that only rules out any contribution from almost-real zeros (which are covered by $\badarcs$). See for instance \cite[Lemma~4.3]{YZ} or \cite[Lemma~13]{MS}.
	\vskip .05in
\item In the proof of \Cref{prop:moments} we can skip the application of \Cref{prop:compare} giving quantitative comparison between the Gaussian case and the general case (this is the only place it is applied). 
	\vskip .05in
\item The proofs of Propositions \ref{prop:PPequivalence} and \ref{prop:moments:sum} remain essentially unchanged -- we just need small ball estimates, which in the Gaussian case are immediate from the boundedness of the Gaussian density, whereas in the general case we apply \Cref{prop:domain.UB}. 
\end{itemize}
We note that the only places in the paper where we deal specifically with the Gaussian distribution are in the proof of \Cref{prop:moments}  (specifically, \Cref{lem:gaussian.computation}) and in \Cref{lem:decorr} on decorrelation of the field at large separations. 

\end{remark}

\section{Some supporting lemmas}
\label{sec:prelim}

\subsection{Control on derivatives}

We first start with a standard result. 

\begin{lemma}\label{lemma:bound:derivative} Let $A,A'$ be positive constants where $A$ is sufficiently large. For any $ \la \ge A \sqrt{\log n}$, with probability at least $1- \exp(-\Theta(\la ^2))$ we have that 
$$\max_{||z|- 1| \le A'/n} |f'(z)|, |\partial f /\partial \theta |, |\partial f /\partial \rho | \le  \la n^{3/2}.$$
We also have similar bounds for second order derivatives that
$$\max_{||z|- 1| \le A'/n} |f''(z)|,| \frac{\partial^2 f}{ \partial^l \theta \partial^{2-l} \rho} |\ll \la n^{5/2}, l=0,1,2.$$
\end{lemma}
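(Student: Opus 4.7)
The plan is the standard combination of sub-Gaussian concentration at fixed points with a union bound over a polynomial-size net, plus Lipschitz interpolation.

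First, via the chain rule at $z=(1+\rho)e^{i\theta}$ (for instance, $\partial_\theta f=izf'(z)$, $\partial_\rho f = zf'(z)/(1+\rho)$, and each second-order partial derivative is an $O(1)$-linear combination of $f'(z)$ and $f''(z)$ on the annulus $\{||z|-1|\le A'/n\}$), it suffices to prove the two bounds for $|f'(z)|$ and $|f''(z)|$. For fixed $z$ in the annulus, write $f'(z)=\sum_{k=1}^n c_k\xi_k$ with $c_k = kz^{k-1}\in\C$, so $\sum_k|c_k|^2\ll n^3$ (using $(1+A'/n)^n = O(1)$). Splitting into real and imaginary parts and applying the sub-Gaussian Hoeffding inequality to each component gives
\[
\Pr\bigl(|f'(z)|>\lambda n^{3/2}\bigr)\le Ce^{-c\lambda^2},
\]
and the identical computation with $c_k=k(k-1)z^{k-2}$ and $\sum_k|c_k|^2\ll n^5$ gives $\Pr(|f''(z)|>\lambda n^{5/2})\le Ce^{-c\lambda^2}$ for absolute $C,c>0$.

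Next, take $\mathcal N$ to be a lattice net of mesh $n^{-2}$ inside the annulus, so $|\mathcal N|\ll n^3$, and union-bound the two estimates above over $\mathcal N$. The failure probability is at most $Cn^3 e^{-c\lambda^2}$, which for $\lambda\ge A\sqrt{\log n}$ with $A$ large enough (depending on $c$) becomes $\exp(-\Theta(\lambda^2))$. On the complementary event, both $|f'(z^\ast)|\le\lambda n^{3/2}$ and $|f''(z^\ast)|\le\lambda n^{5/2}$ hold at every $z^\ast\in\mathcal N$.

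Finally, upgrade these net-point bounds to uniform bounds by interpolation. A simple sub-Gaussian tail gives $\max_k|\xi_k|\ll\sqrt{\log n}$ with probability $1-n^{-10}$, which in turn yields the deterministic bounds $|f''(w)|\ll\sqrt{\log n}\cdot n^3$ and $|f'''(w)|\ll\sqrt{\log n}\cdot n^4$ throughout the annulus (and one can include this event in the union bound without loss). Any $z$ in the annulus is within distance $\sqrt 2\,n^{-2}$ of some $z^\ast\in\mathcal N$, so
\[
|f'(z)|\le|f'(z^\ast)|+\tfrac{\sqrt 2}{n^2}\sup_w|f''(w)|\le\lambda n^{3/2}+O\bigl(\sqrt{\log n}\cdot n\bigr),
\]
with the error term being $o(\lambda n^{3/2})$ thanks to $\lambda\ge A\sqrt{\log n}$, and analogously for $|f''(z)|$ via $|f'''|$. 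Absorbing the resulting $1+o(1)$ factor by slightly enlarging $A$ yields the claimed bounds with probability $\ge 1-\exp(-\Theta(\lambda^2))$. I do not expect any real obstacle here: this is a routine warm-up lemma, with the only quantitative input being the sub-Gaussian Hoeffding inequality, and the mesh $n^{-2}$ is chosen comfortably finer than necessary so that the crude deterministic bounds on $f''$ and $f'''$ suffice for the interpolation step.
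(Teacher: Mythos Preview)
Your argument is essentially sound and takes a different route from the paper. One small repair is needed: the event $\{\max_k|\xi_k|\le C\sqrt{\log n}\}$ that you invoke for the interpolation has failure probability $n^{-10}$, which is \emph{not} dominated by $\exp(-\Theta(\lambda^2))$ when $\lambda$ is much larger than $\sqrt{\log n}$ (e.g.\ $\lambda=n^{1/4}$). The fix is immediate: replace $C\sqrt{\log n}$ by $C\lambda$, so that the sub-Gaussian tail gives failure probability $\le 2ne^{-c\lambda^2}$, which \emph{is} absorbed. The resulting crude bounds $|f''|\ll \lambda n^3$, $|f'''|\ll \lambda n^4$ on the annulus still make the interpolation errors $O(\lambda n)$ and $O(\lambda n^2)$, negligible against $\lambda n^{3/2}$ and $\lambda n^{5/2}$ respectively.

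For comparison, the paper avoids both the net and the interpolation step. It first uses the maximum principle to reduce from the annulus to the two boundary circles $|z|=1\pm A'/n$, where the real and imaginary parts of $f'$ (and $f''$) are trigonometric polynomials $h(\theta)$ of degree $n$. It then applies Bernstein's inequality $\|h'\|_\infty\le n\|h\|_\infty$ to deduce that $|h(\theta)|\ge\frac12\|h\|_\infty$ on an arc of length $1/(2n)$ around the maximizer; this lets one bound $\Pr(\|h\|_\infty\ge\lambda)$ by an \emph{integral} of the MGF $\bE e^{sh(\theta)}$ over $\theta$, rather than a discrete union bound. Your approach trades the analytic inputs (maximum principle, Bernstein) for the more pedestrian net-plus-Lipschitz scheme; both are standard and either is perfectly adequate here.
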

In fact the above holds over $|z| \le 1+A'/n$ but we don't need this fact here.

Before proving the result, recall that  $f(z)= \sum_{k=0}^{n}\xi_k z^k = f\left((1+\rho)e^{i\ang}\right) =\sum_{k=0}^n \xi_k (1+\rho)^{k} \cos(k\theta) + i \sum_{k=0}^n \xi_k (1+\rho)^{k} \sin(k\theta) = \sqrt{n}(X(\rho,\ang) + i Y(\rho,\ang))$. Hence
\begin{align*}
|\frac{\partial  \sqrt{n}X(\rho,\ang)}{\partial \theta}| &= |\sum_{k=0}^n k\xi_k (1+\rho)^{k} \sin(k\theta)| =  |\sum_{k=0}^n (k+1) (k\xi_k/(k+1)) (1+\rho)^{k} \sin(k\theta)|\\
&=  |\sum_{k=0}^n (k+1) \xi_{k+1}' (1+\rho)^{k} \sin(k\theta)| =  |\Re(g'(z))|, 
\end{align*}
where $\xi_{k+1}' = k\xi_k/(k+1)$ and $g(z) = \sum_{k=0}^n \xi_{k+1}' z^{k+1}$, and similarly
\begin{align*}
|\frac{\partial  \sqrt{n}X(\rho,\ang)}{\partial \rho}| &= |\sum_{k=0}^n k\xi_k (1+\rho)^{k-1} \cos(k\theta)| =  |\frac{1}{1+\rho}\sum_{k=0}^n (k+1) (k\xi_k/(k+1)) (1+\rho)^{k} \cos(k\theta)|\\
&=  |\frac{1}{1+\rho}\sum_{k=0}^n (k+1) \xi_{k+1}' (1+\rho)^{k} \sin(k\theta)|=  |\frac{1}{1+\rho} \Im (g'(z))|.
\end{align*}
One can also have similar expression for $\frac{\partial  \sqrt{n}Y(\rho,\ang)}{\partial \theta}, \frac{\partial  \sqrt{n}Y(\rho,\ang)}{\partial \rho}$. Furthermore, by the same way, the second derivatives $\frac{\partial^2 X(\rho,\theta)}{ \partial^l \theta \partial^{2-l} \rho},\frac{\partial^2 Y(\rho,\theta)}{ \partial^l \theta \partial^{2-l} \rho} $ can be related to $h''(z)$, where $h$ is a polynomial of degree $n+2$ and the coefficients are independent of mean zero, variance almost one, and subgaussian. Hence it suffices to establish Lemma \ref{lemma:bound:derivative} only for $\max_{||z|- 1| \le A'/n|} |f'(z)|$ and $\max_{||z|- 1| \le A'/n} |f''(z)|$.

\begin{remark}\label{rmk:CR}
One notes from the Cauchy--Riemann equations in polar form (or Taylor expansion) that for $f$ analytic in a neighborhood of $z_0=r_0e^{i\theta_0}$, 
\[
|\frac\partial{\partial r}f(re^{i\theta_0})|_{r=r_0} \asymp  |f'(r_0e^{i\theta_0})|\,,
\qquad
|\frac\partial{\partial\theta}f(r_0e^{i\theta})|_{\theta=\theta_0} \asymp r_0 |f'(r_0e^{i\theta_0})|\,.
\]
In particular we have $\frac1{\sqrt{n}}|f'((1+\rho)e^{i\theta}|\asymp (\frac{\partial X}{\partial\theta}(\rho,\theta)^2 + \frac{\partial Y}{\partial\theta}(\rho,\theta))^{1/2}$ uniformly over $(\rho,\theta)\in [-1/2,1/2]\times[0,2\pi]$, say, and we have a similar equivalence for second derivatives.
\end{remark}

\begin{proof}[Proof of Lemma \ref{lemma:bound:derivative}] For the first derivatives, it suffices to show that with probability at least $1- \exp(-\Theta(\la ^2))$ we have $\max_{||z|-1| \le A'/n}|f'(z)| \le \la n^{3/2}$. By the maximum principle, it thus suffices to work with $\max |f'(z)|$ on either $\{(1+A'/n)e^{i \theta}, \theta \in \bT\}$ or $\{(1-A'/n)e^{i \theta}, \theta \in \bT\}$. Without loss of generality, let us focus only on the real part of $f'(z)$ on the larger circle. In other words we will show that  with probability at least $1- \exp(-\Theta(\la ^2))$ we have  $\max_{\theta \in \bT} |\sum_{k=0}^n \xi_kk (1+A'/n)^{k-1} \cos((k-1)\theta)| \le \la n^{3/2}$. For short denote just for the proof by $h(\ang):= n^{-3/2} \sum_{k=0}^n\xi_k k (1+A'/n)^{k-1} \cos((k-1)\theta)$. Denote $\lVert h \rVert_\infty := \sup_{\ang\in \bT} |h(\ang)|$ and let $\tilde{\ang}\in \bT$ be such that 
		$$\lVert h \rVert_\infty = |h(\tilde{\ang})|.$$ 
		Since $h$ is a trigonometric polynomial of degree $n$, we can apply the Bernstein inequality and see that for all $|\ang-\tilde{\ang}|\leq 1/(4n)$ 
		\begin{equation}
		\label{eq:application_of_bernstein_inequality}
		|h(\ang)| \ge |h(\tilde{\ang})| - |h(\tilde{\ang}) - h(\ang)| \ge \lVert h \rVert_\infty - |\ang- \tilde{\ang}| \lVert h^\prime \rVert_\infty \ge \frac{1}{2} \lVert h \rVert_\infty.
		\end{equation} 
		Furthermore, since the $\{\xi_k\}$ are assumed to be sub-Gaussian and independent, there exists some $C>0$ so that
		\begin{equation}
			\label{eq:sub_gaussian_inequality}
			\bE \left[e^{s h(\ang)}\right] = \prod_{k=1}^{n} \bE\left[e^{s n^{-3/2} \xi_k k \left(1+\frac{A^\prime}{n}\right)^{k-1} \cos\left((k-1)\ang\right)}\right] \leq e^{C s^2 n^{-3} \sum_{k=1}^n k^2}  \leq e^{Cs^2}
		\end{equation}
		for all $s\in \bR$.

		Applying Markov's inequality with \eqref{eq:application_of_bernstein_inequality} and \eqref{eq:sub_gaussian_inequality} gives
		\begin{align*}
		\bP\left(\lVert h \rVert_\infty \ge \la  \right) & \leq e^{-s \la/2} \bE\left[e^{s \lVert h \rVert_\infty/2}\right] \\ & \leq e^{-s\la/2}\bE\left[2n\int_{|\ang-\tilde{\ang}|<\frac{1}{4n}}\left(e^{sh(\ang)} + e^{-sh(\ang)}\right) \dd \ang \right] \\ & \ll n e^{-\la s/2} \int_{\ang \in \bT} \bE (e^{sh(\ang)} + e^{-sh(\ang)}) \dd\ang\\
		&\le n e^{-\la s/2} e^{Cs^2}.
		\end{align*}
		Choose $s=\la/4C$ and note that $\la \ge A \sqrt{\log n}$ we obtain as desired.
		
		Finally, for the second derivatives $\max_{||z|- 1| \le A'/n} |f''(z)|$, after applying the maximum principle it suffices to focus only on  the two circles, 
		\begin{equation*}
				\Big\{|z| = 1+\frac{A^\prime}{n} \Big\} \quad \text{and} \quad \Big\{|z| = 1-\frac{A^\prime}{n} \Big\}\, ,
		\end{equation*}
		over which the real and imaginary parts are trigonometric polynomials, and hence we can use Bernstein inequality again, the details are left for the reader.
\end{proof}

For convenience, denote by
	\begin{equation*}
	\cG:= \left\{ \max_{||z|-1|=O(1/n)} |f^{(k)}(z)|, |\frac{\partial^k f}{ \partial^l \theta \partial^{k-l} \rho} |\le  n^{k+1/2} \log^2n  \,,\ k=0,1,2;\, 0\le l \le k\right\}.
	\end{equation*}

		\begin{cor}
		\label{cor:derivatives}
		We have 
		$$\bP\left(\cG^c\right) \le \exp\left(-\Theta(\log^4 n)\right).$$
	\end{cor}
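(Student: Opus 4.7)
The plan is simply to apply \Cref{lemma:bound:derivative} with the choice $\la = \log^2 n$ and union bound over the finitely many derivatives involved in the definition of $\cG$. For $n$ large enough we have $\log^2 n \ge A\sqrt{\log n}$, so the hypothesis of the lemma is satisfied and each application produces a failure probability of at most $\exp(-\Theta(\la^2)) = \exp(-\Theta(\log^4 n))$. The bound $\la n^{3/2}$ on first derivatives and $\la n^{5/2}$ on second derivatives then becomes the desired $n^{k+1/2}\log^2 n$ for $k=1,2$.

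The only part not directly covered by the statement of \Cref{lemma:bound:derivative} is the case $k=0$, namely the uniform bound $\max_{||z|-1|=O(1/n)}|f(z)| \le n^{1/2} \log^2 n$. For this, I would repeat verbatim the argument given in the proof of \Cref{lemma:bound:derivative}: by the maximum principle it suffices to bound the real and imaginary parts of $f$ on the two circles $|z|=1\pm A'/n$, each of which is a trigonometric polynomial of degree $n$. The Bernstein inequality then reduces $\|h\|_\infty$, where $h(\theta) = n^{-1/2}\Re f((1+A'/n)e^{i\theta})$, to its values on a $1/(4n)$-neighborhood of its maximizer, and the sub-Gaussian hypothesis on the $\xi_k$ combined with the fact that $n^{-1}\sum_{k=0}^n (1+A'/n)^{2k}\ll 1$ gives the bound $\bE[e^{sh(\theta)}] \le e^{Cs^2}$ exactly as in \eqref{eq:sub_gaussian_inequality}. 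Optimizing in $s$ with $\la = \log^2 n$ produces the claimed probability bound.

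Finally, union-bounding over the $O(1)$ many derivatives (three values of $k$, $k+1$ values of $l$, and both the Cartesian derivative $f^{(k)}(z)$ and the polar derivatives $\partial^k f/\partial^l\theta\,\partial^{k-l}\rho$) preserves the rate $\exp(-\Theta(\log^4 n))$, yielding $\bP(\cG^c)\le \exp(-\Theta(\log^4 n))$. There is no genuine obstacle here; the statement is essentially a packaging of \Cref{lemma:bound:derivative} together with the analogous zeroth-order estimate, with $\la$ chosen as the polylogarithm $\log^2 n$.
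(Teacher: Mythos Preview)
Your proposal is correct and matches the paper's approach: the paper states the corollary without proof, treating it as an immediate consequence of \Cref{lemma:bound:derivative} with $\la=\log^2 n$. Your observation that the $k=0$ case is not literally covered by the lemma's statement but follows by the identical Bernstein-plus-sub-Gaussian argument is a valid completion of what the paper leaves implicit.
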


\subsection{Control on covariances}
\label{subsec-control_covariance}
Here we gather some results on the joint distribution of $X,Y$ and their derivatives at a fixed number of points. 
We begin with the distribution at a single point. 
In the sequel we denote the matrix
		\begin{equation}
			\label{eq:def_of_sigma}
			\Sigma_0 := \begin{pmatrix}
			\frac{1}{2} & 0 & 0 & \frac{1}{4} \\ 0 & \frac{1}{2} & -\frac{1}{4} & 0 \\ 0 & -\frac{1}{4} & \frac{1}{6} & 0 \\ \frac{1}{4} & 0 & 0 & \frac{1}{6}
			\end{pmatrix}.
		\end{equation}
	
	\begin{lemma}
		\label{lem:Sigma0} 
For any fixed $\eps>0$ and $t\in [n^{-1+\eps},\pi - n^{-1+\eps}]$ the (centered) random vector 
		\begin{equation}\label{def:Wang}
			\walk(t) := \left(X(t),Y(t),\frac{1}{n}X^\prime(t),\frac{1}{n}Y^\prime(t)\right)
		\end{equation}
		has  covariance matrix 
		\[
		\Sigma(t) = \bE \walk(t)^\tran \walk(t) =  (
	\text{Id}+O(n^{-\eps})) \Sigma_0\,.
		\]
		where the error term is a matrix with entries of size $O(n^{-\eps})$.
	\end{lemma}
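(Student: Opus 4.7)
The plan is to compute all ten distinct entries of the symmetric $4\times 4$ matrix $\Sigma(t)$ directly from the definitions and to show that each one equals the corresponding entry of $\Sigma_0$ up to an additive error of size $O(n^{-\eps})$. Since $\xi_0,\dots,\xi_n$ are i.i.d.\ with mean zero and unit variance, the covariance of two coordinates of $\walk(t)$ is just the inner product of their coefficient vectors. For example
\[
\bE[X(t)^2]=\frac{1}{n}\sum_{k=0}^n\cos^2(kt),\qquad \bE\Big[X(t)\cdot\tfrac{1}{n}Y'(t)\Big]=\frac{1}{n^2}\sum_{k=0}^n k\cos^2(kt),
\]
and so on. I will expand each such sum via the product-to-sum identities $\cos^2 x=(1+\cos 2x)/2$, $\sin^2 x=(1-\cos 2x)/2$ and $\cos x\sin x=(\sin 2x)/2$, separating each entry into a \emph{principal part} (a sum of $1$, $k$, or $k^2$ without oscillation) and an \emph{oscillatory part} (the same weights multiplied by $\cos(2kt)$ or $\sin(2kt)$).

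The principal parts reproduce $\Sigma_0$ exactly in the limit: the identities $\sum_{k=0}^n 1=n+1$, $\sum_{k=0}^n k=n(n+1)/2$, $\sum_{k=0}^n k^2=n(n+1)(2n+1)/6$ after dividing by $n$, $n^2$, $n^3$ respectively yield the values $1/2$, $1/4$, and $1/6$ that populate $\Sigma_0$. To control the oscillatory parts, I will use the elementary Dirichlet-type bound
\[
\Big|\sum_{k=0}^n k^j e^{2ikt}\Big|\ll \frac{n^j}{|\sin t|}\qquad(j=0,1,2),
\]
obtained either by summation by parts or by differentiating the geometric series $\sum_k e^{2ikt}=(e^{2i(n+1)t}-1)/(e^{2it}-1)$ with respect to $t$. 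Taking real and imaginary parts gives the same bound for weighted sums with $\cos(2kt)$ or $\sin(2kt)$.

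Under the hypothesis $t\in[n^{-1+\eps},\pi-n^{-1+\eps}]$ we have $|\sin t|\gtrsim n^{-1+\eps}$, so the oscillatory contributions to the entries of $\Sigma(t)$, after the appropriate normalization by $1/n$, $1/n^2$, or $1/n^3$, are all $O(n^{-\eps})$; this handles the entries coming from $\bE[X^2],\bE[Y^2]$, the mixed derivative entries, etc. Collecting these estimates entrywise gives $\Sigma(t)=\Sigma_0+O(n^{-\eps})$ (entrywise), which is equivalent to the multiplicative form $\Sigma(t)=(\Id+O(n^{-\eps}))\Sigma_0$ stated in the lemma since $\Sigma_0$ is invertible with $\Theta(1)$ entries. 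There is no serious obstacle here: the proof is purely a bookkeeping exercise on ten explicit trigonometric sums, with the only mildly delicate step being the uniform Dirichlet bound above, which is completely standard.
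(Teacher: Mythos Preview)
Your proposal is correct and follows essentially the same approach as the paper: both compute the entries of $\Sigma(t)$ explicitly, split each via the identities $\cos^2,\sin^2,\sin\cos\to\tfrac12(1\pm\cos 2kt),\tfrac12\sin 2kt$ into a principal part reproducing $\Sigma_0$ and an oscillatory part, and then bound the latter by a Dirichlet-type estimate $|\sum_{k\le n}k^a e^{2ikt}|=O(n^{a+1-\eps})$ on the given range of $t$, finally converting the additive error to multiplicative form via the invertibility of $\Sigma_0$. The only cosmetic difference is that you state the Dirichlet bound as $n^j/|\sin t|$ and then invoke $|\sin t|\gtrsim n^{-1+\eps}$, whereas the paper writes the resulting $O(n^{a+1-\eps})$ directly (citing \cite{BCP}).
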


	\begin{proof}
		By applying simple trigonometric identities, we see that 
		\begin{align*}
		\Sigma(t)=
		\frac{1}{n}\sum_{k=0}^{n}\begin{pmatrix}
		\cos^2(kt) & \frac{1}{2} \sin(2kt) & -\frac{k}{2n}\sin(2kt) & \frac{k}{n}\cos^2(kt) \\ \frac{1}{2} \sin(2kt) & \sin^2(kt) & - \frac{k}{n} \sin^2(kt) & \frac{k}{2n} \sin(2kt) \\ -\frac{k}{2n} \sin(2kt) & - \frac{k}{n} \sin^2(kt) & \frac{k^2}{n^2}\cos^2(kt) & -\frac{k^2}{2n^2} \sin(2kt) \\ \frac{k}{n} \cos^2(kt) & \frac{k}{2n}\sin(2kt) & -\frac{k^2}{2n^2} \sin(2kt) & \frac{k^2}{n^2} \sin^2(kt)
		\end{pmatrix}.
		\end{align*} 
		It remains to note that (see for instance \cite[Appendix B]{BCP})
		\[
		\left|\sum_{k=0}^{n} k^{a} \sin(kt)\right| , \left|\sum_{k=0}^{n} k^{a} \cos(kt)\right| = O( n^{a+1-\eps})
		\]
		for $a=0,1,2$ and $t\in [n^{-1+\eps},\pi - n^{-1+\eps}]$, 
		giving $\Sigma(t)=\Sigma_0+O(n^{-\eps})$. Since $\Sigma_0$ is invertible we can factor it out of the additive error,
		and the claim follows.
	\end{proof}

Now for $\bs t=(t_1,\dots, t_k)\in \R^k$ we denote the random vector
\begin{equation}	\label{def:Snt}
\walk(\bs t)  = \big( \walk(t_1),\dots, \walk(t_k)\big)\;\in\, \R^{4k}.
\end{equation}
with covariance matrix
\[
\Sigma(\bs t) = \bE \walk(\bs t)^\tran \walk(\bs t).
\]

Recall Definition \ref{def:spread} on spread points. When the points $t_1,\dots, t_k$ are $\omega(1)$-spread it is easily seen that the covariance matrix decouples into blocks, as shown in the following:

\begin{lemma}[Decorrelation for Gaussian field]
\label{lem:decorr}
Fix $t_1,\dots, t_k\in [0,\pi]$ and non-negative measurable functions $\phi_1,\dots, \phi_k: \R^4\to \R_+$ supported in $B(0,n^{a_1})$. Assume $\bs t=(t_1,\dots, t_k)\in \R^k$ is $n^{a_2}$-spread. 
Let $W_0\in \R^4$ be a centered Gaussian vector with covariance $\Sigma_0$.
Then
\[
\EGauss \prod_{i=1}^k \phi_i( \walk(t_i) ) = (1+O(n^{2a_1-a_2}))\prod_{i=1}^k \bE \phi_i(W_0) . 
\]
\end{lemma}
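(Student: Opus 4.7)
The plan is to reduce the claim to a comparison of covariance matrices and then a pointwise comparison of Gaussian densities on the support ball $B(0,n^{a_1})^k$. Writing $\Sigma_0^{\oplus k}$ for the block-diagonal matrix with $k$ copies of $\Sigma_0$, I will first show
\[
\Sigma(\bs t) \;=\; \Sigma_0^{\oplus k} \;+\; E, \qquad \|E\|_{\mathrm{op}} \,=\, O(n^{-a_2}),
\]
under the spread hypothesis. The diagonal blocks $\Sigma(t_i)$ are already handled by \Cref{lem:Sigma0}, since $\gamma$-spread with $\gamma=n^{a_2}$ forces $t_i\in[n^{a_2-1},\pi-n^{a_2-1}]$. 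For the off-diagonal blocks $\bE\,\walk(t_i)^\tran\walk(t_j)$, each entry is (up to normalization by powers of $n$) one of
\[
\tfrac1n\sum_{k=0}^n k^a \cos(kt_i)\cos(kt_j),\quad \tfrac1n\sum_{k=0}^n k^a \cos(kt_i)\sin(kt_j),\quad\text{etc.,}\qquad a\in\{0,1,2\},
\]
and product-to-sum identities rewrite these as $\tfrac1{2n}\sum_k k^a[\cos(k(t_i\pm t_j))\text{ or }\sin(k(t_i\pm t_j))]$. The spread assumption, combined with $t_i,t_j\in[0,\pi]$ and the endpoint conditions $t_i\ne 0,\pi$ in \Cref{def:spread}, gives $\|(t_i\pm t_j)/(2\pi)\|_{\R/\Z}\ge n^{a_2-1}/(2\pi)$, so the trigonometric-sum estimate used in the proof of \Cref{lem:Sigma0} yields each off-diagonal entry is $O(n^{-a_2})$. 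Since there are $O(k^2)$ blocks of fixed size, $\|E\|_{\mathrm{op}}=O(n^{-a_2})$.

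Next I will compare the two centered Gaussian densities on $\R^{4k}$. Because $\Sigma_0$ is positive definite with eigenvalues of constant order, for $n$ large the Neumann expansion gives
\[
\Sigma(\bs t)^{-1} \;=\; \Sigma_0^{\oplus k}\vphantom{()}^{-1} \,+\, O(\|E\|_{\mathrm{op}})\,,
\qquad \frac{\det\Sigma_0^{\oplus k}}{\det\Sigma(\bs t)} \;=\; 1 + O(\|E\|_{\mathrm{op}})\,=\,1+O(n^{-a_2}).
\]
Writing $w=(w_1,\dots,w_k)\in(\R^4)^k$ with each $w_i\in B(0,n^{a_1})$ (so $\|w\|^2\le k n^{2a_1}$), the quadratic forms satisfy
\[
\tfrac12 w^\tran \Sigma(\bs t)^{-1} w \;-\; \tfrac12 \sum_{i=1}^k w_i^\tran \Sigma_0^{-1} w_i \;=\; O(\|E\|_{\mathrm{op}}\,\|w\|^2) \;=\; O(n^{2a_1-a_2}).
\]
Consequently, the ratio of the joint density of $\walk(\bs t)$ to $\prod_i p_{\Sigma_0}(w_i)$ equals $1+O(n^{2a_1-a_2})$ pointwise on the support of $\prod_i\phi_i$.

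Integrating this pointwise bound against the non-negative product $\prod_i\phi_i(w_i)$ yields
\[
\EGauss \prod_{i=1}^k \phi_i(\walk(t_i)) \;=\; \bigl(1+O(n^{2a_1-a_2})\bigr)\prod_{i=1}^k \bE\,\phi_i(W_0),
\]
which is the claim. The only real work is the covariance bound in step one; the density comparison is then routine linear algebra since $\Sigma_0$ is a fixed invertible matrix. I expect no serious obstacle beyond carefully tracking that the spread condition, which controls $|t_i-t_j|$ for $i\ne j$ and forbids $t_i$ from approaching $0$ or $\pi$, simultaneously kills both the $(t_i-t_j)$ and $(t_i+t_j)$ contributions in the product-to-sum decomposition.
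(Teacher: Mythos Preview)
Your proposal is correct and follows essentially the same route as the paper: establish $\Sigma(\bs t)=\Sigma_0^{\oplus k}+E$ with $E=O(n^{-a_2})$, invert perturbatively, and compare Gaussian densities on the support ball to pick up the $O(n^{2a_1-a_2})$ factor. In fact you supply more detail than the paper on why the off-diagonal blocks are $O(n^{-a_2})$ (the paper simply asserts the block decomposition), and your observation that the spread condition together with the endpoint constraints in \Cref{def:spread} controls both $t_i-t_j$ and $t_i+t_j$ is exactly the point that makes the product-to-sum reduction go through.
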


\begin{proof}	
	We have
		\begin{equation}
			\label{eq:lambda_is_block_plus_error}
			\Sigma(\bs t) = \begin{pmatrix}
			\Sigma_0 & 0 & \dots & 0 \\ 0 & \Sigma_0 & \dots & 0 \\ \vdots & \vdots & \ddots & \vdots \\ 0 & \dots & 0 & \Sigma_0 
			\end{pmatrix} + E
		\end{equation}
		where $\Sigma_0$ is given by \eqref{eq:def_of_sigma}, and all entries in the matrix $E$ are $O(n^{-a_2})$. 
		It is evident from \eqref{eq:lambda_is_block_plus_error} that $\det(\Sigma(\bs t)) = \det(\Sigma_0)^{k} \left(1 + O_k(n^{-a_2})\right)$ and that
		\[
		\Sigma(\bs t)^{-1} = \begin{pmatrix}
		\Sigma_0^{-1} & 0 & \dots & 0 \\ 0 & \Sigma_0^{-1} & \dots & 0 \\ \vdots & \vdots & \ddots & \vdots \\ 0 & \dots & 0 & \Sigma_0^{-1} 
		\end{pmatrix} + \wt{E}
		\]
		where again all entries of $\wt{E}$ are $O_k(n^{-a_2})$.
		Since $\det(\Sigma(\bs t)) = \det(\Sigma_0)^{k} \left(1 + O_k(n^{-a_2})\right)$ we see that
		\begin{align}
			\nonumber
			 \frac{\left||\det\Sigma_0|^{-k/2} - |\det\Sigma(\bs t)|^{-1/2} \right|}{(2\pi)^k}
			 &\int_{(\bR^4)^k} e^{-\frac{1}{2} w^T (\Sigma(\bs t)^{-1} - \wt{E}) w}  \prod_{i=1}^k\phi_i(w_i)\, \dd m(w) \\ 
			 & \ll_k n^{-a_2}\prod_{i=1}^k \bE\phi_i(W_0) \,.
		\end{align}
		Since $\phi_1,\dots,\phi_k$ are supported on $B(0,n^a)$, using that $1-e^{-x}  \ll  x$ for $x$ small we see that
		\begin{align}
			\nonumber
			\int_{(\bR^4)^k} &\left|e^{-\frac{1}{2} w^T \Sigma(\bs t)^{-1} w} -e^{-\frac{1}{2} w^T (\Sigma(\bs t)^{-1} - \wt{E}) w}\right| \prod_{i=1}^k\phi_i(w_i) \, \dd m(w) \\ 
			\nonumber & \ll \int_{(\bR^4)^k}e^{-\frac{1}{2} w^T (\Sigma(\bs t)^{-1} - \wt{E})  w} \left|w^{T}\wt{E} w\right|\prod_{i=1}^k\phi_i(w_i) \, \dd m(w) \\ 
			\label{eq:comparison_inequality_gaussian_are_close} 
			& \ll_k n^{2a_1-a_2}\prod_{i=1}^k \int_{\R^4}e^{-\frac{1}{2} w_i^T \Sigma_0^{-1} w_i} \phi_i(w_i) \, \dd m(w_i) \,.
		\end{align}
		Combining these bounds, we have
		\begin{align*}
			\bE_{\BN(0,1)}&\prod_{i=1}^k \phi_i(W(t_i)) - \prod_{i=1}^k \bE\phi_i(W_0) \\ 
			&= \frac{1}{(2\pi)^{2k}\sqrt{|\det\Sigma(\bs t)|}} \int_{(\bR^4)^k} \exp\left(-\frac{1}{2} w^T \Sigma(\bs t)^{-1} w\right)  \prod_{i=1}^k \phi_i(w_i) \, \dd m(w) \\ 
			& \qquad - \frac{1}{(2\pi)^{2k}\sqrt{|\det\Sigma_0|^{k}}} \int_{(\bR^4)^k} e^{-\frac{1}{2} w^T (\Sigma(\bs t)^{-1} - \wt{E} ) w}  \prod_{i=1}^k\phi_i(w_i)  \, \dd m(w) \\
			&\ll_k n^{2a_1-a_2} \prod_{i=1}^k \bE \phi_i(W_0)
		\end{align*}
		as claimed. 
\end{proof}
At separations $t-t'= O(n^{-1})$ the vectors $\walk(t),\walk(t')$ become correlated. The following shows that $\Sigma(\bs t)$ is still reasonably well conditioned when $\bs t$ is only $\gamma$-spread for $\gamma\ll1$ (fixed or going to zero). 
Recall that for a symmetric matrix $A$ of dimension $m$ we order its eigenvalues $\lam_1(A)\ge \cdots\ge \lam_m(A)$.

\begin{lemma}\cite[Lemma 3.6]{CNg}\label{lem:cov} 
If $\bs t=(t_1,\dots, t_k)\in [0,\pi]^k$ is $\spread$-spread for some $0<\spread\le1$, then 
\[
\lam_{4k}(\Sigma(\bs t)) \gg_k \spread^{6k-3} .
\]
\end{lemma}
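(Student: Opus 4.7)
The plan is to pass from the smallest-eigenvalue question to a quantitative linear-independence statement for a family of exponential polynomials on $[0,1]$. By the variational characterization, $\lambda_{4k}(\Sigma(\bs t)) = \min_{\|v\|=1} \Var(v \cdot \walk(\bs t))$, so it suffices to prove that $\Var(v\cdot \walk(\bs t)) \gg_k \gamma^{6k-3}$ for every unit $v$. Writing $v = (v_r)_{r=1}^k \in (\R^4)^k$ and introducing the complex parameters $\alpha_r := v_{r,1} - i v_{r,2}$ and $\beta_r := v_{r,4} + i v_{r,3}$ (so that $\sum_r (|\alpha_r|^2 + |\beta_r|^2) = \|v\|^2 = 1$), a direct computation using the expansions of $f$ and $f'$ in powers of $z$, combined with the Cauchy--Riemann identifications of \Cref{rmk:CR}, yields
\[
v\cdot \walk(\bs t) \,=\, \frac{1}{\sqrt{n}} \sum_{j=0}^n \xi_j \cdot \Re\!\Big[\sum_{r=1}^k z_r^j \bigl(\alpha_r + (j/n)\beta_r\bigr)\Big], \qquad z_r := e^{it_r}.
\]
Since the $\xi_j$ are uncorrelated with unit variance, the lemma reduces to the deterministic estimate
\[
\frac{1}{n}\sum_{j=0}^n \Big(\Re\sum_{r=1}^k z_r^j(\alpha_r+(j/n)\beta_r)\Big)^2 \;\gg_k\; \gamma^{6k-3}.
\]

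The next step is to compare the discrete sum with the corresponding $L^2([0,1])$ integral. Setting $\omega_r := n t_r$ and $Q(u) := \sum_{r=1}^k e^{i \omega_r u}(\alpha_r + u\beta_r)$, a closed-form evaluation of the geometric sums $\sum_{j=0}^n e^{ij\theta}$ and $\sum_{j=0}^n j e^{ij\theta}$ (used to expand the diagonal-plus-oscillation decomposition of the summand) shows that the discrete sum differs from $\int_0^1 (\Re Q(u))^2\,\dd u$ by a remainder controlled by a bounded quadratic form in $(\alpha_r,\beta_r)$ times $1/n$, which is negligible compared to the target exponent $\gamma^{6k-3}$ in the regime of interest. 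Thus it suffices to prove $\int_0^1 (\Re Q)^2 \dd u \gg_k \gamma^{6k-3}$. Expanding $\Re Q$ in the $4k$-function basis $\{\cos(\omega_r u), \sin(\omega_r u), u\cos(\omega_r u), u\sin(\omega_r u)\}_{r=1}^k$---an orthogonal change of coordinates from the real/imaginary parts of $(\alpha_r, \beta_r)$---the task becomes to prove $\lambda_{\min}(G) \gg_k \gamma^{6k-3}$ for the $4k \times 4k$ Gram matrix $G$ of this basis in $L^2([0,1])$. By the spread hypothesis, the frequencies $\pm\omega_r$ lie in $[\gamma, n\pi-\gamma]\cup[-(n\pi-\gamma),-\gamma]$ and are pairwise separated by at least $\gamma$ (the separation $|\omega_r+\omega_{r'}|\ge 2\gamma$ coming from $\omega_r,\omega_{r'}\ge\gamma$).

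To lower-bound $\lambda_{\min}(G)$ I would use the operator-norm estimate $\|G\|_\mathrm{op}=O_k(1)$ (each entry is a bounded integral) together with $\lambda_{\min}(G)\ge \det G/\|G\|_\mathrm{op}^{4k-1}$, reducing the problem to a lower bound on $\det G$. The entries of $G$ are the explicit integrals $\int_0^1 u^{a+b}e^{i(\pm\omega_r\pm\omega_{r'})u}\dd u$ with $a,b\in\{0,1\}$, and $\det G$ admits a classical confluent Vandermonde factorization at the nodes $\{\pm i\omega_r\}_{r=1}^k$ with multiplicity two, producing factors of the form $|\omega_r-\omega_{r'}|$, $\omega_r+\omega_{r'}$, and $\omega_r$. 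The main obstacle lies precisely here: one must separate the contribution of "close'' frequency pairs (where $\omega_r\approx\omega_{r'}$ drives the $\gamma$-dependence through a near-degeneracy at scale $\gamma$) from "far'' pairs (whose oscillatory integrals contribute only bounded factors), and combine these with the small-$\omega_r$ boundary factors to obtain the precise exponent $6k-3$ after dividing by $\|G\|_\mathrm{op}^{4k-1}$. The delicate combinatorial bookkeeping of this confluent Vandermonde product---in particular verifying that "linear in $k$'' exponent $6k-3$ rather than a quadratic-in-$k$ exponent is the right answer---is the technical heart of the argument, and is carried out in the more general context of \cite[Lemma 3.6]{CNg}.
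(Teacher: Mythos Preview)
The paper does not prove this lemma --- it is imported verbatim from \cite[Lemma 3.6]{CNg}. So there is no ``paper's own proof'' to compare against, and your proposal must stand on its own.

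Your reduction in the first three paragraphs is correct and natural: the variational formula, the expression of $v\cdot W(\bs t)$ as a linear form in the $\xi_j$ with coefficients $\Re Q(j/n)$, and the passage to a Gram-matrix lower bound are all the right moves. The difficulty is in the last step, and there your sketch has a genuine gap.

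The bound $\lambda_{\min}(G)\ge \det G/\|G\|_{\mathrm{op}}^{4k-1}$ is far too weak to yield the \emph{linear} exponent $6k-3$. In the extremal configuration where all frequencies are clustered (say $\omega_r=r\gamma$, or all $\omega_r$ within $O(k\gamma)$ of zero), \emph{many} eigenvalues of $G$ are simultaneously small --- roughly of orders $1,\gamma^2,\gamma^4,\dots$ --- so that $\det G=\prod_i\lambda_i$ scales like $\gamma^{\Theta(k^2)}$, while the true smallest eigenvalue scales like $\gamma^{\Theta(k)}$. Your own confluent--Vandermonde count confirms this: with $2k$ nodes $\pm\omega_r$ of multiplicity two, the number of near-colliding pairs is $\Theta(k^2)$, each contributing a factor $\gamma^4$ to the determinant. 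No amount of ``combinatorial bookkeeping'' of that product can recover a linear exponent, because the determinant itself is quadratically small in $k$. Deferring this step back to \cite{CNg} is circular, since that is precisely the result you are asked to prove.

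To obtain the linear exponent one needs a different mechanism --- for instance an inductive argument that peels off one frequency at a time at a fixed cost in powers of $\gamma$, or a direct Tur\'an/Remez-type lower bound on $\|Q\|_{L^2}$ in terms of its coefficients that is sharp in the degree. The point is that the worst unit vector sees only a bounded number of ``near-degeneracies'' at once, not all $\binom{k}{2}$ of them simultaneously; the determinant approach cannot exploit this. (A secondary issue: the discrete-to-continuous comparison also needs more care than an $O(1/n)$ remainder when some $\omega_r+\omega_{r'}$ are close to $2\pi n$, due to aliasing in the geometric sums; but this is fixable and is not the main obstruction.)
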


\subsection{Small-ball estimates and CLTs}
\label{sec:distribution}

The following result from \cite{CNg} gives a small ball estimate for the distribution of $W(t_1,\dots, t_k)$ at arbitrary polynomial scales.

\begin{theorem}\cite[Theorem 3.4]{CNg}
\label{thm:smallball}
Let $\bs t=(t_1,\dots,t_k)\in [0,\pi]^k$ be $n^\smooth$-smooth and $\spread$-spread for some $\smooth\in (0,1)$ and $\omega(n^{-1/8k})\le \spread<1$. For any $K<\infty$ and any ball $B$ of radius $\delta\ge n^{-K}$, 
\[
\bP \big( \walk(\bs t) \in B \big) = O_{K,\smooth}(\spread^{-3k}\delta^{4k}).
\]
\end{theorem}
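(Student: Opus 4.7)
My plan is to first establish the estimate for Gaussian coefficients via a direct density bound, and then bootstrap to the sub-Gaussian case via characteristic-function methods.

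In the Gaussian case, $\walk(\bs t)$ is a centered Gaussian with covariance $\Sigma(\bs t) = MM^\tran$, where $M$ is the $4k\times(n+1)$ matrix defined by $\walk(\bs t)=M\xi$ (so entries are of size $O(n^{-1/2})$). The Gaussian density is bounded by $(2\pi)^{-2k}\det(\Sigma(\bs t))^{-1/2}$, hence
\[ \bP(\walk(\bs t)\in B) \ll_k \frac{\delta^{4k}}{\sqrt{\det \Sigma(\bs t)}}. \]
\Cref{lem:cov} gives $\lam_{4k}(\Sigma(\bs t))\gg \spread^{6k-3}$, but the naive estimate $\det \Sigma \gg \spread^{(6k-3)\cdot 4k}$ that this implies is too crude. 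Instead I would use a block-wise Taylor expansion: for a cluster of nearby points $t_i$, writing $\walk(t_{i+1})$ as a Taylor polynomial about $t_i$ shows that the new randomness contributed by block $i+1$ enters at scale $\spread$ in the two derivative coordinates (via $n^{-1}(t_{i+1}-t_i) X''(t_i)$ and its imaginary analogue) and at scale $\spread^2$ in the two value coordinates (from the Taylor remainder). Each successive block therefore multiplies $\det \Sigma(\bs t)$ by $\gg \spread^{2\cdot 1 + 2\cdot 2} = \spread^6$, yielding $\det \Sigma(\bs t) \gg \spread^{6k}$ and the density bound $\spread^{-3k}$.

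For general sub-Gaussian $\xi$, I would pass to Fourier via an Esseen-type inequality
\[ \bP(\walk(\bs t)\in B) \ll_k \delta^{4k} \int_{\|\eta\|\le 1/\delta} |\phi(\eta)|\, \dd m(\eta), \]
with $\phi(\eta)=\prod_{j=0}^n \phi_\xi((M^\tran\eta)_j)$ factoring because $\xi_0,\ldots,\xi_n$ are independent. After a symmetrization, a standard non-degeneracy estimate for the coefficient distribution gives $|\phi_\xi(s)|\le \exp(-c\|s\|_{\R/\Z}^2)$, so $|\phi(\eta)| \le \exp(-c\sum_{j=0}^n\|(M^\tran\eta)_j\|_{\R/\Z}^2)$. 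The unrestricted $\ell^2$-norm of $M^\tran\eta$ equals $\eta^\tran \Sigma(\bs t)\eta \gg \spread^{6k-3}\|\eta\|^2$ by \Cref{lem:cov}. To transfer this to the $\R/\Z$-norm one invokes the $n^\smooth$-smoothness hypothesis, which excludes those $\eta$ for which $(M^\tran\eta)_j$ concentrates around integers for too many $j$; concretely, smoothness of each $t_i$ ensures that the trigonometric sums defining the rows of $M$ are incommensurable with the integer lattice on a positive-density subset of indices $j\in\{0,\ldots,n\}$.

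The main obstacle is obtaining the sharp exponent $\spread^{-3k}$. Using only the eigenvalue bound $\lam_{4k}(\Sigma)\gg \spread^{6k-3}$ and integrating the resulting Gaussian-like tail over $\|\eta\|\le 1/\delta$ would yield an estimate of the form $\spread^{-(6k-3)\cdot 2k}\delta^{4k}$, much weaker than the target. The sharp exponent forces one to exploit the fine block structure of $\Sigma(\bs t)$ at small scales, conditioning sequentially on blocks $\walk(t_1),\ldots,\walk(t_k)$ and using that each conditional covariance has determinant $\gg \spread^6$, the same amortized bound as in the Gaussian argument. Implementing this on the Fourier side while simultaneously handling the arithmetic structure via the smoothness hypothesis is the technical core of the proof, carried out in \cite{CNg} via a multivariate generalization of the small-ball level-set methods of \cite{DNN}.
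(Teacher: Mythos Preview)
The paper does not prove this statement: it is quoted verbatim as \cite[Theorem~3.4]{CNg} and used as a black box. So there is no ``paper's own proof'' to compare against. Your proposal is not a proof but a sketch of the strategy behind the cited result, and you yourself say as much in the final sentence.

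That said, as a roadmap your sketch is broadly accurate. A couple of points where the heuristic is imprecise: in the Gaussian block analysis, the first block contributes a determinant factor $\asymp 1$ (since $\Sigma_0$ is nondegenerate), and only the $k-1$ subsequent blocks contribute factors of order $\spread^6$; this gives $\det\Sigma(\bs t)\gg \spread^{6(k-1)}$, which is actually stronger than the $\spread^{6k}$ you wrote (and still yields the stated bound since $\spread<1$). More substantively, the real work in \cite{CNg} is not merely ``conditioning sequentially on blocks'' on the Fourier side, but controlling the level sets of the characteristic function in the regime where $\|M^\tran\eta\|_{\R/\Z}$ differs from $\|M^\tran\eta\|_2$; this is where the $n^\smooth$-smoothness hypothesis enters in an essential way, via Diophantine properties of the angles $t_i$, and your description of this step (``incommensurable with the integer lattice on a positive-density subset of indices'') is too vague to constitute an argument. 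The multivariate decoupling of this arithmetic obstruction from the geometric (spread) structure is the genuine content of the proof in \cite{CNg}, building on the univariate techniques of \cite{DNN}.
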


While the above estimates are quite strong, we will also need bounds at non-smooth points, especially near the edge. We have the following:

\begin{lemma}\label{lemma:smallball:edge} Assume that $\frac{\la}{n} \le  t \le \pi -\frac{\la}{n} $. Then for any $\delta \gg n^{-1/2}$ we have
$$\bP(|f(e^{i t})|/\sqrt{n} \le \delta)= O(\la^{-2}\delta^2).$$
\end{lemma}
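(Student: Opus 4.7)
The plan is to apply a two-dimensional small-ball inequality to the random vector $V(t)=(X(t),Y(t))\in\R^2$, whose Euclidean norm equals $|f(e^{it})|/\sqrt n$.

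First I would compute the $2\times 2$ covariance matrix $\Gamma(t):=\bE V(t)^\tran V(t)$ in closed form. Using the identities $\cos^2\theta=(1+\cos2\theta)/2$, $\sin^2\theta=(1-\cos2\theta)/2$ and $\cos\theta\sin\theta=(\sin 2\theta)/2$ together with the geometric sum bound $\bigl|\sum_{k=0}^n e^{2ikt}\bigr|\le 1/|\sin t|$, one reduces the analysis of $\Gamma(t)$ to the bound $|\sin t|\gg\lambda/n$. After Taylor-expanding $\sin,\cos$ near the edge and carefully tracking the cancellations between the diagonal entries and the square of the off-diagonal entry, I would deduce that the smallest eigenvalue of $\Gamma(t)$ is of order $\lambda^2$ (uniformly over the relevant range of $\lambda$).

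Next I would apply the two-dimensional Halász--Esseen inequality
\[
\bP\bigl(|V(t)|\le\delta\bigr)\ll \delta^2\int_{|\xi|\le 1/\delta}|\phi_{V(t)}(\xi)|\,\dd\xi,
\]
with $\phi_{V(t)}(\xi)=\prod_{k=0}^n\phi_{\xi_0}\bigl(n^{-1/2}(\xi_1\cos kt+\xi_2\sin kt)\bigr)$. Since $\xi_0$ is sub-Gaussian with unit variance, there is $c_0>0$ with $|\phi_{\xi_0}(s)|\le\exp(-c_0 s^2)$ for $|s|\le 1/c_0$; the hypothesis $\delta\gg n^{-1/2}$ is precisely what forces each factor's argument to satisfy $|n^{-1/2}(\xi_1\cos kt+\xi_2\sin kt)|\ll n^{-1/2}\delta^{-1}=o(1)$ on the integration domain, yielding the uniform bound $|\phi_{V(t)}(\xi)|\le\exp(-c_0\,\xi^\tran\Gamma(t)\xi)$. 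Integrating this centered Gaussian over $\R^2$ then gives $\pi/(c_0\sqrt{\det\Gamma(t)})$, and combining with the eigenvalue estimate produces a first bound of order $\delta^2/\lambda$. To sharpen this to $\delta^2/\lambda^2$, I would diagonalize $\Gamma(t)$ and treat $V(t)$ axis by axis: the ``long'' principal direction (variance of order $1$) contributes a one-dimensional small-ball factor $\ll\delta$ via the usual Halász inequality, while the ``short'' direction (variance of order $\lambda^2$) contributes $\ll\delta/\lambda$ after applying Halász to the linear combination of $\xi_k$'s with the rescaled (order $\lambda$) weights; a conditional combination of the two projections gives the claimed rate.

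The main obstacle is the sharp lower bound on $\lambda_{\min}(\Gamma(t))$: the leading $O(1)$ contributions to the diagonal entries and to the off-diagonal entry largely cancel when one forms $\det\Gamma(t)$, so one must carry the Taylor expansions of the relevant trigonometric sums to higher order to expose the genuine $\lambda^2$ gap. A secondary point is verifying that the quadratic sub-Gaussian regime of $\phi_{\xi_0}$ is preserved throughout the integration domain $|\xi|\le 1/\delta$, which is exactly why the assumption $\delta\gg n^{-1/2}$ is imposed.
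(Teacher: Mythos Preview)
Your argument through the Hal\'asz--Esseen step is correct and in fact already finishes the proof; the trouble is only in the final ``sharpening'' paragraph, which is both unnecessary and internally inconsistent.

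Once you have $|\phi_{V(t)}(\xi)|\le\exp(-c_0\,\xi^\tran\Gamma(t)\xi)$ on $\{|\xi|\le1/\delta\}$ and $\lambda_{\min}(\Gamma(t))\asymp\lambda^2$, $\lambda_{\max}(\Gamma(t))\asymp1$, integrating over $\R^2$ gives $\pi/(c_0\sqrt{\det\Gamma(t)})\asymp\lambda^{-1}$, hence $\bP(|V(t)|\le\delta)\ll\delta^2/\lambda$. In the only regime where the lemma is used (and where the cited covariance lower bound from \cite{CNg} applies), one has $\lambda\le1$, so $\delta^2/\lambda\le\delta^2/\lambda^2$ and you are done. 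No further work is required. Your attempted refinement goes in the wrong direction: for $\lambda<1$ the target $\delta^2/\lambda^2$ is a \emph{weaker} bound than the $\delta^2/\lambda$ you already have. Moreover, the ``axis-by-axis'' computation you sketch does not produce $\delta^2/\lambda^2$ anyway---the product of the two one-dimensional factors is $\delta\cdot(\delta/\lambda)=\delta^2/\lambda$---and the ``conditional combination'' is unjustified, since for non-Gaussian $\xi_k$ the projections onto the principal axes of $\Gamma(t)$ are uncorrelated but not independent.

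The paper's proof follows the same Fourier/Esseen route but is slightly cruder: rather than retaining the full quadratic form $\xi^\tran\Gamma(t)\xi$, it uses only the isotropic lower bound $\xi^\tran\Gamma(t)\xi\ge\lambda_{\min}(\Gamma(t))\,|\xi|^2\gg\lambda^2|\xi|^2$ (imported from the covariance estimate in \cite{CNg}), so that the integral of $|\phi_{V(t)}|$ is bounded by $\int_{\R^2}e^{-c\lambda^2|\xi|^2}\,\dd\xi\asymp\lambda^{-2}$, yielding exactly the stated $O(\lambda^{-2}\delta^2)$. Your version, keeping the anisotropy, recovers the extra factor of $\lambda$ and is therefore sharper in the edge regime $\lambda\ll1$; the paper simply did not need this improvement for the application in \Cref{sec:exceptional}.
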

We will present a proof of this result in Appendix \ref{appendix:smallball}.

Next we recall the following result from \cite{CNg} providing a fine-scale comparison of the distribution of $W(\bs t)$ with a Gaussian vector, under the assumption that the points $t_1,\dots, t_k$ are both smooth and spread.  	

\begin{theorem}\cite[Theorem 3.2]{CNg}
\label{thm:box}
Let $\bs t= (t_1,\dots, t_k)\in[0,\pi]^k$ be $n^\smooth$-smooth and $1$-spread for some $\smooth>0$. Fix $K>0$ and let $Q\subset\R^{4k}$ be a box (direct product of intervals) with side lengths at least $n^{-K}$.
Then
\[
\big| \bP\big(  \walk(\bs t) \in  Q\big) 
- \PGauss\big(  \walk(\bs t) \in Q \big) \big| 
\ll n^{-1/2} |Q|
\]
where $|Q|$ is the volume of $Q$, and the implied constant depends only on $k, \smooth, K$, 
and the sub-Gaussian constant for $\xi$. 
\end{theorem}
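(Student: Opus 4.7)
The plan is to prove this as a quantitative multidimensional local CLT via Fourier comparison of characteristic functions. Writing $\walk(\bs t) = \sum_{k=0}^n \xi_k v_k$ where $v_k = v_k(\bs t) \in \R^{4k}$ collects the deterministic coefficients $n^{-1/2}(\cos(kt_r), \sin(kt_r), -(k/n)\sin(kt_r), (k/n)\cos(kt_r))_{r=1}^{k}$, the characteristic function factors as
\[
\phi_W(\xi) := \bE e^{i\langle \xi, \walk(\bs t)\rangle} = \prod_{k=0}^n \phi_{\xi_0}\bigl(\langle \xi, v_k\rangle\bigr), \qquad \xi\in\R^{4k},
\]
while the comparison Gaussian has $\phi_G(\xi) = \exp(-\tfrac12 \langle\xi, \Sigma(\bs t)\xi\rangle)$. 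Convolving $\1_Q$ with a smooth bump $\chi$ at scale $\eta$ much smaller than the minimum side length of $Q$ and applying Fourier inversion reduces matters to bounding
\[
\int_{\R^{4k}} |\phi_W(\xi) - \phi_G(\xi)|\cdot |\widehat{\chi * \1_Q}(\xi)|\, d\xi + O(\eta \cdot |\partial Q|) \,\ll\, n^{-1/2} |Q|,
\]
where the boundary error from the smoothing is harmless for $\eta$ polynomially smaller than the side lengths.

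I would then partition the frequency integration into three regimes. In the \emph{bulk} $\|\xi\|\le n^{\varepsilon_0}$ for a small $\varepsilon_0>0$, a cumulant expansion using $\bE\xi_0=0$, $\bE\xi_0^2=1$ and the sub-Gaussian tail yields
\[
\log\phi_W(\xi) = -\tfrac12 \langle\xi, \Sigma(\bs t)\xi\rangle + O\Bigl(\|\xi\|^3 \sum_k \|v_k\|^3\Bigr) = \log\phi_G(\xi) + O(n^{-1/2}\|\xi\|^3),
\]
since $\max_k\|v_k\|\ll_k n^{-1/2}$ and $\sum_k \|v_k\|^2 = \operatorname{tr} \Sigma(\bs t) = O_k(1)$ by \Cref{lem:Sigma0}; integrating against $|\widehat{\chi*\1_Q}|$ contributes $O(n^{-1/2}|Q|)$. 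In the \emph{intermediate} regime $n^{\varepsilon_0}\le \|\xi\|\le n^A$ for $A$ large, the Gaussian decay $|\phi_G(\xi)|\le \exp(-c\lam_{4k}(\Sigma(\bs t))\|\xi\|^2)$ from \Cref{lem:cov}, combined with a matching sub-Gaussian decay $|\phi_W(\xi)|\le \exp(-c\sum_k \min(\langle\xi, v_k\rangle^2,1))$ obtained by Taylor expanding each factor in a neighborhood of zero, makes both integrals super-polynomially small (here the $1$-spread hypothesis, via \Cref{lem:cov}, ensures the quadratic form is genuinely coercive in all $4k$ directions).

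The main technical obstacle is the \emph{tail} regime $\|\xi\|\gg n^A$, where $\phi_G$ is already negligible but bounding $|\phi_W(\xi)|$ requires the smoothness hypothesis in an essential way. Using the pointwise bound $|\phi_{\xi_0}(s)|\le 1 - c\,\bE\min(\|s\xi_0/2\pi\|_{\R/\Z}^2,1)$ (which in the Littlewood case $\xi_0 = \pm 1$ reduces to $|\cos s|\le 1 - c\|s/\pi\|_{\R/\Z}^2$), the problem reduces to showing that for any such $\xi$, a positive fraction of the inner products $\langle\xi, v_k\rangle$ ($0\le k\le n$) stay macroscopically far from $\pi\Z$, so that $|\phi_W(\xi)|\le e^{-cn}$. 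Because $v_k$ is trigonometric in the frequencies $kt_r$, a violation of this bound would force the row vectors $(v_k)$ to cluster near a low-complexity sublattice of $\R^{4k}$; by a multidimensional inverse Erd\H{o}s--Littlewood--Offord argument this in turn would force some $p_0 t_r/\pi$ (with $p_0$ bounded) to be $O(1/n)$-close to $\Z$, contradicting $n^\smooth$-smoothness for $n$ large. This inverse step, adapted from \cite{DNN} to the $4k$-dimensional setting, is the heart of \cite{CNg} and where the bulk of the technical work lies; once it is in place, summing the three regime estimates and optimizing the smoothing scale yields the claimed bound $n^{-1/2}|Q|$.
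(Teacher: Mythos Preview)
The paper does not actually prove this statement: it is imported wholesale from \cite[Theorem~3.2]{CNg} and stated without proof (the paper remarks that this local CLT is ``arguably the technically most challenging component used in our proof'' and is in turn a multivariate generalization of a result from \cite{DNN}). So there is no proof in the present paper to compare against.

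That said, your outline matches the architecture of the argument in \cite{CNg}: Fourier inversion after smoothing, a three-regime split of frequency space, Edgeworth/cumulant expansion in the bulk to produce the $n^{-1/2}$ gain, Gaussian-type decay in an intermediate zone via the covariance lower bound (\Cref{lem:cov}), and an inverse Littlewood--Offord/structure argument in the far tail where smoothness is used to rule out near-periodicity of the inner products $\langle \xi, v_k\rangle$.

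One point to tighten: your intermediate-regime bound $|\phi_W(\xi)|\le \exp(-c\sum_k \min(\langle\xi,v_k\rangle^2,1))$ is not valid for discrete $\xi_0$ once $|\langle\xi,v_k\rangle|$ leaves a fixed neighborhood of $0$ (for Rademacher, $|\phi_{\xi_0}(s)|=|\cos s|$ returns to $1$ at $s\in\pi\Z$). The Taylor bound only holds while $|\langle\xi,v_k\rangle|=O(1)$, i.e.\ for $\|\xi\|\ll n^{1/2}$; so your ``intermediate'' range should end at $\|\xi\|\asymp c\sqrt{n}$ rather than $n^A$ for large $A$, and everything beyond that must already be handled by the arithmetic/inverse-theory argument you sketch for the tail. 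This is exactly why the tail step is the hard one and why the smoothness hypothesis cannot be dispensed with. With that adjustment to the regime boundaries, your plan is the right one.
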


The above results let us control the measure, under the law of $\walk(\bs t)$, of domains in $\R^{4k}$ that can be accurately approximated or covered by unions of cubes or balls of (any) polynomially-small size.

\begin{definition}	\label{def:good.domain}
Say a domain $\cD\subset \R^d$ with piecewise smooth boundary is \emph{$(K,L)$-good} if $\partial D$ can be covered by a family of cubes $Q$ with corners in the scaled lattice $n^{-K}\Z^d$, with total volume $\sum|Q|\le n^{-L}$. 
\end{definition}

Note that if $\cD_1,\cD_2$ are $(K,L)$-good, then $\cD_1^c$, $\cD_1\cup \cD_2$ and $\cD_1\cap \cD_2$ are $(K,L-1)$-good (for all $n$ sufficiently large). 
\Cref{thm:smallball} and a covering argument yield the following:

\begin{prop}
\label{prop:domain.UB}
Let $\bs t\in [0,\pi]^{k}$ be as in \Cref{thm:smallball} and let $\cD\subset \R^{4k}$ be $(K,L)$-good for some $K,L>0$. 
Then
\[
\bP( \walk(\bs t) \in \cD) \ll_{K,\smooth} \gamma^{-3k}( m(\cD) + n^{-L}).
\]
In particular, for $k=1$ and $t=t_1$ that is $n^\kappa$-smooth, 
\[
\bP(\walk(t) \in \cD) \ll_{K,\smooth} m(\cD) + n^{-L}.
\]
\end{prop}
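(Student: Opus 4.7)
The plan is a standard lattice-cover argument combining the definition of a $(K,L)$-good domain with the small-ball bound of \Cref{thm:smallball}. First, I would tile $\R^{4k}$ by closed cubes of side $n^{-K}$ aligned with the lattice $n^{-K}\Z^{4k}$, and collect the subfamily $\mathcal{T}_\cD$ of cubes meeting $\cD$. Split $\mathcal{T}_\cD$ into ``interior'' cubes fully contained in $\cD$ and ``boundary'' cubes meeting $\partial \cD$. The interior cubes are essentially disjoint and lie in $\cD$, so their total volume is at most $m(\cD)$. For the boundary cubes, I would use the good-domain cover of $\partial \cD$: its cubes have corners in $n^{-K}\Z^{4k}$, so refining each into unit cubes of side $n^{-K}$ preserves total volume, and dilating this refinement by one lattice step (at a multiplicative cost of $O_k(1)$) produces a family that contains every tiling cube meeting $\partial \cD$. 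Hence the total volume of boundary cubes is $\ll_k n^{-L}$.

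Next, I would apply \Cref{thm:smallball} to each $Q\in \mathcal{T}_\cD$. Since $Q$ is contained in a ball of radius $O_k(n^{-K})$, and \Cref{thm:smallball} is available at arbitrary polynomial scales, it yields
\[
\bP\big(\walk(\bs t) \in Q\big) \ll_{K,\kappa,k} \gamma^{-3k} (n^{-K})^{4k} \asymp_k \gamma^{-3k}|Q|.
\]
Summing this pointwise bound over all $Q\in\mathcal{T}_\cD$ and plugging in the two volume estimates gives
\[
\bP\big(\walk(\bs t)\in \cD\big) \le \sum_{Q\in\mathcal{T}_\cD} \bP\big(\walk(\bs t) \in Q\big) \ll_{K,\kappa,k} \gamma^{-3k}\bigl(m(\cD) + n^{-L}\bigr),
\]
which is the claimed estimate.

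For the special case $k=1$ with a single $n^\kappa$-smooth point $t$, I would note that taking $p_0=1$ in \Cref{def:smooth} forces $\|t/\pi\|_{\R/\Z} > n^{\kappa-1}$, so that $t$ is at distance $\gg n^{\kappa-1}$ from both $0$ and $\pi$; in particular, $t$ is $(1/2)$-spread. Applying the general case with $\gamma=1/2$ absorbs the $\gamma^{-3}$ factor and gives $\bP(\walk(t) \in \cD) \ll_{K,\kappa} m(\cD) + n^{-L}$. The only mildly delicate point is the bookkeeping in the boundary step — verifying that every tiling cube hitting $\partial \cD$ is accounted for, up to a dimensional constant, by the good-domain cover — but this is routine once one works with closed lattice cubes and the one-step dilation noted above. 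Apart from that, the proof is entirely a small-ball-plus-covering exercise.
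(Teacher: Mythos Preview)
Your proposal is correct and is precisely the covering argument the paper has in mind; indeed, the paper does not spell out a proof but simply states that \Cref{thm:smallball} together with a covering argument yields the result, and your lattice-tiling plus boundary/interior decomposition is the standard way to execute this. The handling of the $k=1$ case via the observation that $n^\kappa$-smooth implies $1$-spread (hence $(1/2)$-spread) is also exactly what the paper notes after \Cref{def:spread}.
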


Combining \Cref{thm:box} and \Cref{lem:decorr}, we obtain that the joint law of $(W(t_1),\dots, W(t_k))$ in phase space approximately factorizes into independent Gaussian measures as soon as the times $t_1,\dots, t_k$ are sufficiently spread.  

	\begin{prop}
	\label{prop:compare}
	Fix $n^\smooth$-smooth points $t_1,\dots, t_k\in [0,\pi]$, and let $W_0\in \R^4$ be a centered Gaussian with covariance matrix $\Sigma_0$ as in \eqref{eq:def_of_sigma}.
\begin{enumerate}
\item[(a)] If $t_1,\dots, t_k$ are 1-spread, then for any $(K,L)$-good domain $\cD\subset \R^{4k}$, 
	\begin{equation}
	\bigg| \bP \big(W(\bs t) \in \cD\big)-  \PGauss\big(W(\bs t) \in \cD\big) \bigg| \ll n^{-1/2}m(\cD) + n^{-L} \,.
	\end{equation}
\item[(b)] For any $a_1,a_2>0$, if $t_1,\dots, t_k$ are $n^{a_1}$-spread, then for any $(K,L)$-good domains $\cD_1,\dots, \cD_k\subset B(0,n^{a_2})$ in $\R^4$,
	\begin{equation}
	\bigg| \bP \big(W(t_1)\in\cD_1,\dots,W(t_k)\in \cD_k\big)-  \prod_{j=1}^k \bP(W_0\in \cD_j) \bigg| \ll n^{-\min(\frac12,a_1-2a_2)}\prod_{j=1}^k m(\cD_j) + n^{-L}  \,.
	\end{equation}
\end{enumerate}
Here the implied constants depend on $K,L, \kappa,k$, and the sub-Gaussian moment of $\xi$.
	\end{prop}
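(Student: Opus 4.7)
\textbf{Proof plan for Proposition \ref{prop:compare}.}

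For part (a), the plan is a decomposition-plus-comparison argument. By the $(K,L)$-good hypothesis, cover $\partial \cD$ with a family $\cF$ of axis-aligned cubes with corners in $n^{-K}\Z^{4k}$, of total volume at most $n^{-L}$; call the union of these cubes the boundary layer $\cB$, and let $\cD^\circ = \cD\setminus \cB$. Then $\cD^\circ$ can be written as a disjoint union of cubes $Q$ of side $n^{-K}$ with corners in the same lattice. On each such interior cube, Theorem \ref{thm:box} applies (we are in the $n^\smooth$-smooth, $1$-spread regime with $|Q|=n^{-4kK}\ge n^{-K'}$ for $K'=4kK$) and yields
\[
\big| \bP(\walk(\bs t)\in Q) - \PGauss(\walk(\bs t)\in Q) \big| \ll n^{-1/2}|Q|.
\]
Summing over interior cubes contributes a total error of at most $n^{-1/2}m(\cD)$. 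For the boundary layer, Theorem \ref{thm:smallball} (applied cube-by-cube, with $\gamma=1$) gives $\bP(\walk(\bs t)\in \cB)\ll \sum_{Q\in\cF}|Q|\ll n^{-L}$; and Lemma \ref{lem:cov} with $\gamma=1$ shows $\Sigma(\bs t)$ is well-conditioned, so the Gaussian density is bounded and $\PGauss(\walk(\bs t)\in\cB)\ll m(\cB)\le n^{-L}$. Summing the three contributions yields the claimed bound in part (a).

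For part (b), I would apply part (a) to the product domain $\cD_1\times\cdots\times \cD_k\subset \R^{4k}$. This product is $(K,L-O_k(1))$-good because taking products and intersections of good domains only loses a constant (as already observed after Definition \ref{def:good.domain}); since $n^{a_1}$-spread implies $1$-spread, part (a) gives
\[
\big| \bP(\walk(\bs t)\in \cD_1\times\cdots\times\cD_k) - \PGauss(\walk(\bs t)\in \cD_1\times\cdots\times\cD_k) \big| \ll n^{-1/2}\prod_j m(\cD_j) + n^{-L}.
\]
Then I would apply Lemma \ref{lem:decorr} to the Gaussian probability, taking $\phi_j=\indf{\cD_j}$ which are supported in $B(0,n^{a_2})$, with spread exponent $a_1$. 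The lemma (with the roles of its $a_1,a_2$ played by our $a_2,a_1$) gives
\[
\PGauss(\walk(\bs t)\in \cD_1\times\cdots\times\cD_k) = \big(1+O(n^{2a_2-a_1})\big)\prod_{j=1}^k \bP(W_0\in \cD_j),
\]
and since $\prod_j \bP(W_0\in \cD_j)\ll \prod_j m(\cD_j)$ (Gaussian density bounded), the two errors combine to give the stated bound $n^{-\min(1/2,\,a_1-2a_2)}\prod_j m(\cD_j) + n^{-L}$.

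The main technical step is the boundary control in part (a): one must verify that both laws assign negligible mass to the boundary cover $\cB$. The Gaussian side is immediate from boundedness of the density (hence from Lemma \ref{lem:cov}), while the non-Gaussian side requires the small-ball estimate of Theorem \ref{thm:smallball}, whose cube-by-cube application leverages exactly the shape of $\cB$ as a union of polynomial-scale cubes of controlled total volume. Once part (a) is in hand, part (b) is a clean chaining of (a) with Lemma \ref{lem:decorr}, with no further obstacles.
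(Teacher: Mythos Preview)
Your approach is correct and matches the paper's: the proposition is presented there as a direct combination of Theorem~\ref{thm:box} and Lemma~\ref{lem:decorr} (with the covering argument already implicit in Proposition~\ref{prop:domain.UB}), and your cube decomposition for part~(a) followed by chaining with Lemma~\ref{lem:decorr} for part~(b) is exactly the intended route. One small correction: the remark after Definition~\ref{def:good.domain} treats complements, unions and intersections in a \emph{fixed} ambient space, not Cartesian products, so the $(K,L)$-goodness of $\cD_1\times\cdots\times\cD_k$ requires a separate (easy) check---cover its boundary by $\bigcup_j \cD_1^+\times\cdots\times\cB_j\times\cdots\times\cD_k^+$---which loses $O(ka_2)$ rather than $O_k(1)$ in the $L$-exponent; this is harmless, consistent with the paper's own ``up to modification of parameters $K,L$'' caveat in the remark following the proposition.
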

	
	\begin{remark}
	By a routine approximation argument, the above result is equivalent  with the statement that for any nonnegative observables $\phi_1,\dots,\phi_k:\R^4\to \R$ supported on $B(0,n^{a_1})$ with first-order partial derivatives uniformly bounded by $n^{K}$, 
		\begin{equation}
	\bigg| \bE \prod_{j=1}^k \phi_j(W(t_j)) -  \prod_{j=1}^k \bE\phi_j(W_0) \bigg| \ll n^{-\min(\frac12,a_2-2a_1)}\prod_{j=1}^k \int_{\R^4} \phi_j \dd \mLeb +n^{-L} \,,
	\end{equation}
	(up to modification of parameters $K,L$). 
	\end{remark}

\section{Proof of \Cref{prop:exceptional}}\label{sec:exceptional}
Recall that the measure of $\badarcs$ on the unit circle is bounded by $O(n^{2\kappa-1})$. Both \eqref{badarcs.i} and \eqref{badarcs.ii} will follow once we prove that
\begin{equation}
	\label{eq:minima_on_bad_arcs_not_small}
	\bP\left(\min_{t\in \badarcs} |f(t)| \leq \log^4n\right) \ll n^{-\kappa/4}
\end{equation}
for sufficiently small $\kappa>0$. Indeed, denote by $\cB:= \{\min_{t\in \badarcs} |f(t)| \leq \log^4n\}$, and observe that if there exist $\theta_\al\in \badarcs$ such that $\cA_\al$ hold, then we must have
\begin{equation*}
	\sqrt{|X(\theta_\al)|^2+|Y(\theta_\al)|^2} \leq 2n^{-2/3} \ll \frac{\log^4n}{\sqrt{n}}.
\end{equation*}
This implies the inclusion $\left\{\mu^\flat(\bR)>0 \right\} \subset \cB$ and hence \eqref{badarcs.i}. For \eqref{badarcs.ii}, observe that on the event
\begin{equation*}
	\{\exists \ang \in \badarcs, z\in Z(f) : |z - e^{i \ang}| \le n^{-3/2}\} \cap \cG
\end{equation*}
we can Taylor expand around the root $z\in Z(f)$ and get that
\[
|f(e^{i\theta})| \leq |f(z)| + n^{3/2}\log^2n|z-e^{i\theta}| \ll \log^2n.
\] 
Therefore, for $n$ large enough, we get that
\begin{equation*}
	\bP\left(\exists \ang \in \badarcs, z\in Z(f) : |z - e^{i \ang}| \le n^{-3/2}\right) \leq \bP\left(\cB\cap \cG \right) + \bP\left(\cG^c\right) \ll n^{-\kappa/4}
\end{equation*}
where the last inequality follows from \eqref{eq:minima_on_bad_arcs_not_small} and Corollary \ref{cor:derivatives}.

By the above reasoning, the proof of \Cref{prop:exceptional} will follow once we prove \eqref{eq:minima_on_bad_arcs_not_small}.
\begin{proof}[Proof of \eqref{eq:minima_on_bad_arcs_not_small}]
	Note that by Corollary \ref{cor:derivatives} we may always assume that $\cG$ holds. We first show that $|f|$ cannot be too small on the set $\badarcs \cap [\frac{1}{n^{1+\kappa}}, \pi - \frac{1}{n^{1+\kappa}}]$. Indeed, we cover this set by non-overlapping interval $\{J_\beta\}_{\beta=1}^{B}$ with $|J_\beta| \leq n^{-7/4}$ and $B\ll n^{7/4}/n^{1-2\kappa} = n^{3/4+2\kappa}$. Denote the mid-point of each $J_\beta$ by $x_\beta$. By Taylor expansion, we have
	\[
	|f(e^{ix_\beta})| \leq |f(z)| + n^{3/2}\log^2n|z-e^{ix_\beta}| \ll |f(z)| + \frac{\log^2n}{n^{1/4}}
 	\] 
 	for all $z\in J_\beta$. Therefore, the event $\{\min_{J_\beta}|f|\leq \log^4n\}\cap \mathcal{G}$ is contained in $\{|f(x_\beta)|\leq \log^5n\}$. By applying the union bound together with Lemma \ref{lemma:smallball:edge} (with $\lambda=n^{-\kappa}$) we see that
 	\begin{align}
 		\nonumber \bP\left(\min_{t\in \badarcs \cap [\frac{1}{n^{1+\kappa}}, \pi - \frac{1}{n^{1+\kappa}}]} |f(t)|\leq \log^4 n\right) &\ll \sum_{\beta=1}^{B} \bP\left(|f(e^{ix_\beta})|\leq \log^5n \right) + \bP(\cG^c) \\ \label{eq:first_union_bound_on_bad_arcs} &\ll B \frac{n^{2\kappa+o(1)}}{n} \ll n^{4\kappa + o(1) -1/4}.
 	\end{align}
 	For the intervals $[0,n^{-1-\kappa}]$ and $[\pi-n^{-1-\kappa},\pi]$ we argue similarly, but instead of \Cref{lemma:smallball:edge} we use the classical Berry-Esseen bound. Dividing both $[0,n^{-1-\kappa}]$ and $[\pi - n^{-1-\kappa},\pi]$ into $O(n^{1/2-\kappa/2})$ intervals $I_j$ of length $n^{-3/2-\kappa/2}$. Under $\cG$, if $|f(e^{i\theta})|\leq \log^4n$ for some $\theta\in I_j$, then $|f(e^{i\theta_j})|\leq \log^6n$, where $\theta_j$ is the mid point of $I_j$. Next, by the Berry--Esseen theorem \cite[Theorem~12.4]{bha_rao}, 
 	$$\bP(|f(e^{i\ang_j})|<\log^6n)\ll \frac{\log^6n}{\sqrt{n}}.$$
 	A union bound gives
 	\[
 	\bP(\exists \ang \in \cup I_j \,:\,|f(e^{i\ang})|<\log^4n) \ll n^{1/2-\kappa/2} \frac{\log^6n}{\sqrt{n}} \ll n^{-\kappa/4}.
 	\]
 	Together with \eqref{eq:first_union_bound_on_bad_arcs} we get that $\bP(\cB)\ll n^{-\kappa/4}$, which is what we wanted.
 \end{proof}

\section{Proof of \Cref{prop:moments}}\label{sect:moments}
For intervals $U,V\subset \bR$ and $r\ge0$ we denote the domain
\begin{align}\label{def:domain}
\cD_{U,V,r} &= 
\bigg\{ (x,y,x',y')\in \R^4: 
\frac{yx^\prime-xy^\prime}{x'^2+y'^2}\in\frac1nU\,,\ \frac{xx' + yy'}{x'^2 + y'^2} \in \frac{n}NV \,, \   x'^2+y'^2 < r^2 \bigg\}	\notag\\
&= \bigg\{ (w,z)\in \R^2\times \R^2: 
\frac{w\cdot z^\perp}{|z|^2}\in \frac1nU\,,\ \frac{w\cdot z}{|z|^2} \in \frac{n}NV,\  |z| < r \bigg\}
\end{align}
where for $z= (x',y')$ we denote $z^\perp = (-y',x')$. We have
\begin{equation}
\Big\{ n^2\rho_\al \in  U, \ N(\theta_\al-\dang_\al) \in V \,,\,   |X'(\theta_\al)|^2+|Y'(\theta_\al)|^2 < r^2n^2
\,\Big\}
= \Big\{ W(\theta_\al) \in \cD_{U,V,r} \Big\}. 
\end{equation}
One further sees that the events $\cA_\al, \cA_\al',\cA_\al''$ from \eqref{eq:defA}, as well as $\{Z_\al\in [a,b]\}$ for any interval $[a,b]$, can all be expressed as events that $\walk(\theta_\al)$ lies in a domain of the form
\begin{equation}	\label{domains}
(\cD_{U,V,R}\setminus \cD_{U,V,r}) \cap I\times J\times I'\times J'
\end{equation}
for some (possibly infinite) intervals $U,V,I,I',J,J'\subseteq \R$ and $0\le r<R\le \infty$. 
Recall the notion of a $(K,L)$-good domain from \Cref{sec:distribution}. 
One easily sees the following:

\begin{fact}	\label{fact:domains}
For any $A, L>0$, $r\ge0$ and intervals $U,V$ such that $r,|U|,|V|\le n^A$, and (possibly infinite) intervals $I,J,I',J'\subseteq\R$, the domain \eqref{domains}
 is $(K,L)$-good for some $K(A,L)$ sufficiently large. 
\end{fact}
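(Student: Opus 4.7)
The plan is to identify the boundary of $(\cD_{U,V,R}\setminus\cD_{U,V,r})\cap(I\times J\times I'\times J')$ as a union of $O(1)$ pieces of algebraic hypersurfaces of degree at most two, show that each piece lies inside a cube of polynomial size and has polynomial $3$-dimensional Hausdorff measure there, and then invoke a standard Vitali covering.

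First I would note that on $\cD_{U,V,R}\setminus\cD_{U,V,r}$ one has $|z|\le R\le n^A$ where $z=(x',y')$, while under the complex identification $w=x+iy$, $z=x'+iy'$, the two scalar conditions on $w$ read $w/z\in \tfrac{n}{N}V+i\tfrac{1}{n}U$, so $|w|\le|z|\big(\tfrac{n}{N}|V|+\tfrac{1}{n}|U|\big)\le n^{2A+o(1)}$ (using $|U|,|V|\le n^A$ and $N\asymp n^2/\log^{K_0}n$). Hence the entire domain sits inside a cube $B_n:=[-n^{3A},n^{3A}]^4$ for $n$ large, and all the boundary pieces below may be intersected with $B_n$ without loss.

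Since $\cD_{U,V,r}\subseteq\cD_{U,V,R}$, the boundary of our set is contained in the union of the following $O(1)$ algebraic hypersurfaces: the spheres $\{|z|=r\}$ and $\{|z|=R\}$; the four quadrics $\{w\cdot z=c|z|^2\}$ for $c$ an endpoint of $\tfrac{n}{N}V$; the four quadrics $\{w\cdot z^\perp=c|z|^2\}$ for $c$ an endpoint of $\tfrac{1}{n}U$; and the at most eight coordinate hyperplanes at the endpoints of $I,J,I',J'$. Each of these, restricted to $B_n$, has $3$-dimensional Hausdorff measure at most $n^{C_1 A}$ for some absolute $C_1$: for the flat and spherical pieces this is trivial, and for the quadric pieces it follows from an $O(1)$-chart parametrization (locally solving for whichever coordinate of $w$ has gradient of polynomial size) together with the bound on the chart domain, or more abstractly from integral-geometric bounds on the $3$-volume of a degree-$2$ variety inside a box of side $n^{3A}$.

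Finally, any smooth $3$-piece $S\subset\R^4$ of $3$-volume $V$ contained in $B_n$ can be covered by $O(V/\delta^3)+O(1)$ cubes of the lattice $\delta\Z^4$ with $\delta=n^{-K}$ (by a standard Vitali-plus-tube argument: the $\delta$-neighborhood of $S$ has $4$-volume $O(V\delta)$, hence is covered by $O(V/\delta^3)$ cubes), contributing total $4$-volume $O(V\delta+\delta^4)=O(Vn^{-K})$. Summing over the $O(1)$ boundary pieces yields total $4$-volume at most $O(n^{C_1 A-K})$, so any $K=K(A,L)\ge C_1 A+L+1$ establishes the $(K,L)$-good property. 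The only mildly delicate point is the $3$-volume bound on the quadric pieces near the degenerate locus $\{z=0\}$, handled by splitting into a small $z$-neighborhood of the origin (where the quadric degenerates into a $2$-plane plus a cone of arbitrarily small $3$-volume inside $B_n$) and its complement (where the implicit function theorem gives an explicit bounded smooth chart).
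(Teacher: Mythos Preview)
Your argument is correct. The paper's own justification (immediately following the Fact) is a one-line remark using a slightly different and more concrete viewpoint: rather than viewing the boundary as a union of degree-two algebraic hypersurfaces, it observes that for each fixed $z=(x',y')$ the $w$-slice of $\cD_{U,V,r}$ is a rectangle of area $|U||V||z|^2/N$ whose corners vary smoothly in $z$ with polynomially bounded derivatives. This slicing description directly exhibits $\partial\cD$ (away from the spherical caps $|z|=r,R$) as four smooth graphs over the $z$-annulus, so the $3$-volume and cube-covering bounds are immediate without appealing to general integral-geometric estimates for quadrics or worrying about the degenerate locus $\{z=0\}$. Your route---classify the boundary pieces as quadrics/planes/spheres, bound their $3$-volume inside a polynomial box, then take a $\delta$-tube and count lattice cubes---is more general-purpose and would work for domains without such a clean product structure, at the cost of invoking slightly heavier machinery (Wongkew-type bounds or explicit charting near the singular locus). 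Both arrive at the same conclusion with the same dependence $K=K(A,L)$; the paper's version is just tailored to the specific geometry of $\cD_{U,V,r}$.
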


Indeed, the cross-section of $\cD$ obtained by fixing $z$ is a rectangle in $\R^2$ of area $|U||V||z|^2/N$ with corners that vary smoothly (with polynomially bounded derivatives) in $z$. 
Integrating this expression over $z$ we find
\begin{equation}	\label{mLebD}
m(\cD_{U,V,r}) = \frac{|U||V|}{N} \int_{|z|\le r} |z|^2 \dd m(z) = \frac{\pi r^4 |U||V|}{2N}.
\end{equation}
	Modulo \Cref{prop:compare}, the proof of \Cref{prop:moments} essentially comes down to a computation of the measure of $\cD_{[a,b],[-\frac\pi2,\frac\pi2],\infty}$ under the Gaussian law of $\walk(\theta_\al)$. 
	In fact, in order to control some bad events we will need the following more general result allowing finite $r=r(n)$ (in particular allowing $r=o(1)$).

	\begin{lemma}
	\label{lem:gaussian.computation}
			Fix an arbitrary $\eps>0$ and let $t\in [n^{-1+\eps},  \pi -n^{-1+\eps}]$. 
For any $r>0$ (possibly infinite or depending on $n$) and intervals 
$U,V\subset[-n^{0.1},n^{0.1}]$,
\begin{align}
&\bP_{\BN(0,1)}\Big( W(t) \in \cD_{U,V,r}
\Big) 
 = \bigg(\frac{12}{\pi^2}+
 O(n^{-\eps/2})
 \bigg)\frac{|U||V|}{N} \cdot \int_{|z|\le r}|z|^2 e^{-12|z|^2} \dd m(z)  \,.	\label{gauss.comp1}
\end{align}
In particular, taking $V=[-\frac\pi2,\frac\pi2]$ and $r=\infty$, we have that for
$\alpha$ so that $\theta_\alpha\in [n^{-1+\eps}, \pi-n^{-1+\eps}]$,
\begin{equation}	\label{gauss.comp3}
\bP_{\BN(0,1)}\Big(n^2\rho_\al \in U, \ \dang_\al\in I_\al \Big)
= \bigg(\frac1{12}+
O(n^{-\eps/2})
\bigg)\frac{|U|}{ N} \,.
\end{equation}

\end{lemma}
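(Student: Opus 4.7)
My plan is to reduce the problem to a Gaussian integral with covariance exactly $\Sigma_0$, then exploit the block structure of $\Sigma_0$ to condition on $(X_1,Y_1)$ and perform the remaining two-dimensional integration in a rotated basis. The specialization \eqref{gauss.comp3} will then follow from \eqref{gauss.comp1} by an elementary polar-coordinate computation.

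First, by \Cref{lem:Sigma0}, $W(t)$ is centered Gaussian with covariance $\Sigma(t) = (I + O(n^{-\eps}))\Sigma_0$. I would begin by restricting to the event $\{|z|\le n^{\eps/4}\}$, where $z:=(X_1,Y_1)$; the complementary event contributes a superpolynomially small tail to both sides of \eqref{gauss.comp1}. On this restricted event, the constraints defining $\cD_{U,V,r}$ force $\|(X,Y)\|=O(n^{-0.9+\eps/4}\log^{K_0}n)=o(1)$, so $W\in B(0,n^{\eps/4})$. Writing out the ratio $p_{\Sigma(t)}/p_{\Sigma_0}$ of Gaussian densities then shows it equals $1+O(n^{-\eps/2})$ uniformly on this ball, so that up to a $1+O(n^{-\eps/2})$ multiplicative factor it suffices to evaluate the probability using the pure Gaussian law $G\sim\BN(0,\Sigma_0)$ in place of $W(t)$.

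With the covariance exactly $\Sigma_0$, a Schur-complement computation based on \eqref{eq:def_of_sigma} yields $(X_1,Y_1)\sim\BN(0,\tfrac{1}{6}I_2)$ and $(X,Y)\mid (X_1,Y_1)\sim\BN(-\tfrac{3}{2}z^\perp,\tfrac{1}{8}I_2)$, where $z^\perp=(-Y_1,X_1)$. Rotating coordinates via $\alpha=(X,Y)\cdot\hat z$, $\beta=(X,Y)\cdot\hat z^\perp$ diagonalizes the conditional law into independent $\alpha\sim\BN(0,\tfrac18)$ and $\beta\sim\BN(-\tfrac{3}{2}|z|,\tfrac18)$, and transforms the three constraints into $\alpha\in\tfrac{n|z|}{N}V$, $\beta\in\tfrac{|z|}{n}U$, and $|z|<r$. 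In the feasible window both $|\alpha|$ and $|\beta|$ are $o(1)$, so $e^{-4\alpha^2}=1+o(1)$ while $e^{-4(\beta+3|z|/2)^2}=e^{-9|z|^2}(1+o(1))$; the conditional $(\alpha,\beta)$-integral thus evaluates to $(1+o(1))\tfrac{4}{\pi N}|U||V||z|^2 e^{-9|z|^2}$. Multiplying by the marginal density $\tfrac{3}{\pi}e^{-3|z|^2}$ of $z$ combines the two exponentials into the kernel $\tfrac{12}{\pi^2}|z|^2 e^{-12|z|^2}$, which integrated over $\{|z|<r\}$ is precisely \eqref{gauss.comp1}. The specialization \eqref{gauss.comp3} follows at once by taking $V=[-\pi/2,\pi/2]$ and $r=\infty$ and using $\int_{\R^2}|z|^2 e^{-12|z|^2}\,\dd m(z) = 2\pi\int_0^\infty s^3 e^{-12s^2}\,\dd s = \pi/144$.

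The main subtlety to guard against is that the target quantity is only of order $1/N\ll 1$, so one needs \emph{relative} (not merely absolute) accuracy $o(1)$. This is what forces the preliminary truncation $|z|\le n^{\eps/4}$ before invoking the density-ratio comparison in the first step, and what requires a first-order expansion of the $\beta$-Gaussian exponent in the narrow feasible $\beta$-window rather than a crude supremum bound.
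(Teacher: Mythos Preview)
Your approach is essentially the same as the paper's: both condition on $z=(n^{-1}X',n^{-1}Y')$, compute the conditional law of $(X,Y)$ as $\BN(-\tfrac32 z^\perp,\tfrac18 I)$ via the Schur complement, pass to the rotated coordinates $(\hat z,\hat z^\perp)$, and approximate the Gaussian density on the narrow feasible window to extract the factor $\tfrac{4}{\pi N}|U||V||z|^2e^{-9|z|^2}$, which combines with the marginal $\tfrac3\pi e^{-3|z|^2}$ to give the kernel in \eqref{gauss.comp1}. The only organizational difference is that you replace $\Sigma(t)$ by $\Sigma_0$ upfront via a density-ratio bound on $B(0,n^{\eps/4})$, whereas the paper carries the $O(n^{-\eps})$ perturbation of the covariance through the conditional computation and removes it at the end.

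One loose end: the assertion that ``the complementary event contributes a superpolynomially small tail to both sides'' does not by itself give \emph{relative} control, since $|U||V|$ carries no lower bound and the main term may itself be superpolynomially small. When $r<n^{\eps/4}$ the tail is vacuously zero; when $r\ge n^{\eps/4}$ you should bound the left-side tail \emph{conditionally} on $z$ using the same boundedness-of-density argument as in the main step, obtaining
\[
\bP\big(W(t)\in\cD_{U,V,r},\;|z|>n^{\eps/4}\big)\;\ll\; \frac{|U||V|}{N}\int_{|z|>n^{\eps/4}}|z|^2 e^{-c|z|^2}\,\dd m(z),
\]
which is the needed bound relative to the main term (and is exactly how the paper handles this range). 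You flag the relative-accuracy issue in your final paragraph, but apply it only to the density-ratio step and not to the truncation itself.
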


	\begin{proof}[Proof of \Cref{lem:gaussian.computation}] 
 	By the monotone convergence theorem we may assume $r$ is finite (possibly, $n$ dependent).
	Recall from \Cref{lem:Sigma0} that $(n^{-1}X'(t), n^{-1}Y'(t))$ is a centered Gaussian with covariance 
	$\frac16\text{Id} + O(n^{-\eps})$,
	and for $z
	\in \bR^2$, 
	the conditional distribution of $\left(X(t),Y(t)\right)$ given 
	$ (n^{-1}X^\prime(t) ,n^{-1}Y^\prime(t))=z$
	is  Gaussian with mean 
	$-\frac{3}{2}z^\perp + O(n^{-\eps})|z|$
	and covariance matrix 
	$\frac{1}{8}\text{Id} + O(n^{-\eps})$, where we write $O(n^{-\eps})$ for a matrix or vector with entries of size $O(n^{-\eps})$ (with implicit constants independent of $z$).
	Denoting by $\bP_{z}$ 
	the
	conditional distribution of $\left(X(t),Y(t)\right)$ given that $ (n^{-1}X^\prime(t),n^{-1}Y^\prime(t)) = z$, we can express the left hand side of \eqref{gauss.comp1} as
		\begin{align}
		\label{eq:gaussian_computation_first_intensity}
&\quad\bP_{\BN(0,1)}\left(\frac{YX^\prime/n-XY^\prime/n}{(X^\prime/n)^2+(Y^\prime/n)^2}\in\frac1nU,\ \frac{XX^\prime/n + YY^\prime/n}{(X^\prime/n)^2+(Y^\prime/n)^2} \in \frac{n}NV,\  |X'|^2+|Y'|^2 \le r^2n^2\right) \nonumber \\
&\quad =\frac{3}{\pi}\int_{|z|\le r}e^{-(3+O(n^{-\eps}))|z|^2} \bP_{z}\left(\frac{\avg{(X,Y),z^\perp}}{|z|^2}\in \frac1nU, \ \frac{\avg{(X,Y),z}}{|z|^2}\in \frac{n}NV  \right) \dd m(z)\,.	
		\end{align}
Under $\bP_z$ we can express 
\[
(X,Y) = -\frac32z^\perp + O(n^{-\eps})|z| + (\frac1{\sqrt{8}}\text{Id} + O(n^{-\eps}))G
\]
for a standard Gaussian $G=(G_1,G_2)\in \R^2$, and thus by the rotational invariance of $G$ we may re-express 
\begin{align}	\label{gauss.comp4}
&\bP_{z}\left(\frac{\avg{(X,Y),z^\perp}}{|z|^2}\in \frac1nU, \ \frac{\avg{(X,Y),z}}{|z|^2}\in \frac{n}NV  \right)
= \bP\left( \Big(\frac1{\sqrt{8}}\text{Id} + O(n^{-\eps})\Big)G \in |z| (I\times J) \right)
\end{align}
for the shifted intervals $I =  \frac32 + O(n^{-\eps}) + \frac1nU$ and $J= O(n^{-\eps})+\frac n{N} V$.
By our assumptions on $U,V$, the Gaussian density varies by a factor of at most 
\[
1+ O(|z|^2\max(\frac1n|U|, \frac{n}{N}|V|)) =1+O( |z|^2 n^{-1/2}), 
\]
say, on $|z|(I\times J)$, and the latter probability in \eqref{gauss.comp4} is thus
\[
(1+O(|z|^2 n^{-1/2})) \frac{4}{\pi} |z|^2\frac{|U||V|}{N}e^{ - (9 + O(n^{-\eps}))|z|^2}.
\]
Combining with \eqref{eq:gaussian_computation_first_intensity}, we have shown that the left hand side of \eqref{gauss.comp1} is
\begin{equation}
(1+O(n^{-1/4}))\frac{|U||V|}{N} \frac{12}{\pi^2} \int_{|z|\le r}e^{-(12+O(n^{-\eps}))|z|^2}|z|^2 \dd m(z)
\end{equation}
uniformly for $r\le n^{1/8}$. 
Now estimating 
\begin{align*}
&\bigg| \int_{|z|\le r}e^{-12|z|^2}|z|^2 \dd m(z) 
 - \int_{|z|\le r}e^{-(12+O(n^{-\eps}))|z|^2}|z|^2 \dd m(z) \bigg|\\
 &\qquad\qquad\le \int_{|z|\le r} e^{-12|z|^2 }| 1- e^{O(n^{-\eps})|z|^2} | |z|^2\dd m(z) \\
 &\qquad \qquad\ll 
 n^{-\eps/2} \int_{|z|\le r} e^{-12|z|^2} |z|^2\dd m(z) 
\end{align*}
uniformly for $r\le n^{\eps/4}$, we obtain \eqref{gauss.comp1} for this range.

For larger $r$ we bound the right hand side of \eqref{gauss.comp4} by $O(|z|^2|U||V|/N)$, and so the contribution to \eqref{eq:gaussian_computation_first_intensity} from integration over $n^{\eps/4}\le |z|\le r$ is
\[
\ll \frac{|U||V|}{N}  \int_{|z|\ge n^{\eps/4}} |z|^2 e^{-(3+O(n^{-\eps}))|z|^2} \dd m(z)
\ll \frac{|U||V|}{N} e^{-n^{\eps/4}}
\]
uniformly in $r\ge n^{\eps/4}$.
This is easily absorbed by the error term in \eqref{gauss.comp1}, giving the claim.
	\end{proof}
	
	\begin{proof}[Proof of \Cref{prop:moments}]
		We first assume that $U=[a,b]$ is a compact interval. Fix a $k$-tuple of indices $(\al_1,\dots,\al_k)\in [N]^k$ as in the statement of \Cref{prop:moments}. 
		Recall that the events $\{Z_{\al_i}\in [a,b]\}$ are of the form $\{W(\theta_{\al_i})\in \cD\}$ for a domain $\cD\subset B(0,10\log^2n)$ in $\R^4$ that is $(K,10)$-smooth for some $K>0$ sufficiently large (see \Cref{fact:domains}). 
		Thus, the claim will follow from an application of \Cref{prop:compare} once we show
		\begin{equation}	\label{moments:goal1}
		\bP_{\BN(0,1)}\left(Z_\al\in [a,b] \right) = \frac{1}{12}\cdot\frac{b-a}{N} + o\left(\frac{1}{N}\right).
		\end{equation}
		for each $\al=\al_1,\dots, \al_k$. Since
		\[
		\Big\{ Z_\al\in [a,b]\Big\} = \Big\{ n^2\rho_\al \in  [a,b],\ \dang_\al \in I_\al ,\ \cA_\al^{\dprime} \Big\}\,,
		\]
		from \eqref{gauss.comp3} and \Cref{cor:derivatives} we see that it suffices to show
		\begin{equation}	\label{moments:goal2}
		\bP_{\BN(0,1)}\left(Z_\al\in [a,b] \,,\, (\cA_\al'')^c\,,\, \cG \right)  = o(N^{-1}).
		\end{equation}
		On $\{Z_\al\in [a,b],\,\cG\}$,  the projection of $(X(\theta_\al), Y(\theta_\al))$ in the directions $(X'(\theta_\al), Y'(\theta_\al))$ and $(X'(\theta_\al), Y'(\theta_\al))^\perp$ are of size $O(n^{-1}\log^2n)$ and $O(N^{-1}n\log^2n) = n^{-1+o(1)}$, respectively, so 
		\[
		X(\theta_\al)^2+Y(\theta_\al)^2 \le n^{-2+o(1)} = o(n^{-4/3}).
		\]
		Thus, on $\{Z_\al\in [a,b],\,(\cA_\al'')^c,\,\cG\}$ we must have that either $\{|X'(\theta_\al)| \le n\log^{-2K_0} n\}$ or $\{|Y'(\theta_\al)| \le n\log^{-2K_0} n\}$. 
		Hence,
		\[
		\Big\{Z_\al\in [a,b],\,(\cA_\al'')^c,\,\cG\Big\}\subset \Big\{ W(\theta_\al) \in \cD_{U, [-\frac\pi2,\frac\pi2], 2\log^{-2K_0} n}\Big\}.
		\]
		From \eqref{gauss.comp1} the latter event has probability 
		\[
		\ll \frac{1}{N} \int_{|z|\le \log^{-2K_0} n} |z|^2 e^{-12|z|^2}\dd m(z) = o(N^{-1})\,,
		\]
		which yields \eqref{moments:goal2} and hence \eqref{moments:goal1}. Moving to consider $U\subset\bR$ which is a finite union of compact intervals, we note that 
		\[
		\bP_{\BN(0,1)}\left(Z_\al\in U \right) = \frac{1}{12}\cdot\frac{|U|}{N} + o\left(\frac{1}{N}\right)
		\]
		follows from \eqref{moments:goal1} by finite additivity. Combining the above display with \Cref{prop:compare} we complete the proof of \Cref{prop:moments}. 
	\end{proof}

\section{Proof of Proposition \ref{prop:PPequivalence}}	\label{sect:PPequivalence}

	For an interval $U\subset \bR$, denote the respective annular domain by
	\begin{equation*}
		U_\al =\left\{z\in \bC \mid n^2\left(1-|z|\right) \in U,\ \text{arg}(z)\in I_\al \right\}.
	\end{equation*}
	Recall the definition $F_\al$ from \eqref{eq:def_of_linear_approximation}.
	\begin{claim}
		\label{claim:symmetric_difference_is_small}
		Fix an arbitrary compact interval $U$. There exists $\delta \ll n^{-5/2}$ such that for any $\al\in [N]$, on the event $\cA_\al''\cap \cG$, 
\[
U_\al^- \subset F_\al^{-1}(\frac1{\sqrt{n}} f(U_\al)) \subset U_\al^+
\]
where $U_\al^+$ is the $\delta$-neighborhood of $U_\al$, and $U_\al^-$ is the complement of the $\delta$-neighborhood of $U_\al^c$.
	\end{claim}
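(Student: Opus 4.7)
The plan is to realize $F_\al^{-1}(\tfrac{1}{\sqrt n} f(U_\al))$ as the image $g_\al(U_\al)$ of the smooth self-map $g_\al := F_\al^{-1} \circ (f/\sqrt n)$, viewed in $(t,\rho)$ coordinates where $F_\al$ is an affine bijection of $\R^2$. In these coordinates $U_\al$ is a rectangle, and Euclidean distance in $(t,\rho)$ agrees with Euclidean distance in $\C$ up to a factor $1+o(1)$ for $z$ in an $O(1/n)$-annulus around the unit circle. Both inclusions will follow once I show that on $\cG \cap \cA_\al''$ the map $g_\al$ is uniformly close to the identity on $C_\al$ (which contains $U_\al$ for $n$ large, since $U$ is compact), with Lipschitz constant of $g_\al - \mathrm{id}$ tending to $0$.

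The key perturbation bound comes from two ingredients. First, on $\cG$ the second $(t,\rho)$-partials of $(X,Y)$ are $\ll n^2 \log^2 n$ on $C_\al$, and any $(t,\rho) \in C_\al$ satisfies $\max(|t-\theta_\al|,|\rho|) \ll \log^{K_0} n / n^2$. A second-order Taylor expansion at $(\theta_\al,0)$ then gives the remainder $E_\al := f/\sqrt n - F_\al$ with
\[
\sup_{C_\al} |E_\al| \ll n^{-2}\log^{2K_0+2} n, \qquad \sup_{C_\al} \|D E_\al\| \ll \log^{K_0+2} n .
\]
Second, on $\cA_\al''$ the linear part $M_\al$ of $F_\al$ (the matrix appearing in \eqref{eq:def_of_linear_approximation}) satisfies $|\det M_\al| = X'(\theta_\al)^2 + Y'(\theta_\al)^2 \ge n^2 \log^{-4K_0} n$, hence $\|M_\al^{-1}\| \ll \log^{2K_0} n / n$. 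Since by construction $g_\al(z) = z + M_\al^{-1} E_\al(z)$, combining the two estimates yields
\[
\sup_{C_\al} |g_\al(z)-z| \ll \log^{4K_0+2} n / n^3, \qquad \sup_{C_\al}\|Dg_\al-I\| \ll \log^{3K_0+2} n/n = o(1).
\]
I would then set $\delta := n^{-3}\log^{5K_0} n$, which for $n$ large is simultaneously $\ll n^{-5/2}$ and larger than the uniform bound on $|g_\al - \mathrm{id}|$.

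With these estimates, the inclusion $g_\al(U_\al) \subset U_\al^+$ is immediate. For the reverse, given $z' \in U_\al^-$ I would define $T(z) := z' - M_\al^{-1} E_\al(z)$ on $\overline{U_\al}$. Since $|T(z) - z'| \le \delta$ and $z' \in U_\al^-$ has distance $> \delta$ from $U_\al^c$, the map $T$ sends $\overline{U_\al}$ into $U_\al$; since $\|DT\| = \|Dg_\al - I\| = o(1)$, $T$ is a strict contraction on $\overline{U_\al}$, and Banach's fixed point theorem produces a unique $z \in U_\al$ with $T(z) = z$, equivalently $g_\al(z) = z'$, showing $z' \in g_\al(U_\al) = F_\al^{-1}(\tfrac{f}{\sqrt n}(U_\al))$.

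The only real thing to watch is the bookkeeping of logarithmic factors, so that $\delta$ satisfies both $\delta \ll n^{-5/2}$ and $\delta \ge \sup|g_\al - \mathrm{id}|$, and so that $\|Dg_\al - I\|$ is genuinely small (rather than merely $O(1)$) to run the contraction argument. Neither is truly an obstacle given the strong $\|D^2 f\|$-control provided by $\cG$ and the lower bounds $|X'(\theta_\al)|,|Y'(\theta_\al)| \gtrsim n/\log^{2K_0} n$ on $\cA_\al''$. A small point of care is that the entire argument takes place in $(t,\rho)$-coordinates, where $F_\al$ is a genuine affine bijection of $\R^2$, so no issue of invertibility of a holomorphic function arises.
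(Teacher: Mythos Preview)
Your proof is correct and follows essentially the same approach as the paper: bound the Taylor remainder $E_\al=f/\sqrt n-F_\al$ on $C_\al$ using the second-derivative control from $\cG$, bound $\|M_\al^{-1}\|$ using the lower bound on $|X'|,|Y'|$ from $\cA_\al''$, and conclude that $g_\al=F_\al^{-1}\circ(f/\sqrt n)$ is within $n^{-3+o(1)}$ of the identity on $C_\al$. The paper stops at the pointwise bound $|z-g_\al(z)|\ll n^{-3+o(1)}$ and declares the result, implicitly leaving the inclusion $U_\al^-\subset g_\al(U_\al)$ to a degree-theoretic observation (since $g_\al$ is close to the identity on $\partial U_\al$, it has winding number $1$ around any $z'\in U_\al^-$). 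Your Banach fixed-point argument for this inclusion is a genuine (and cleaner) alternative: it costs you the extra estimate $\|Dg_\al-I\|\ll \log^{3K_0+2}n/n=o(1)$, which the paper never writes down, but this follows painlessly from the same ingredients and makes the surjectivity onto $U_\al^-$ fully explicit without topology. Your logarithmic bookkeeping is also slightly tighter than the paper's (you get $\|M_\al^{-1}\|\ll \log^{2K_0}n/n$ directly from $\|M_\al^{-1}\|=1/\sqrt{X'^2+Y'^2}$, whereas the paper records the looser $\log^{6K_0}n/n$), but this is immaterial to the claim.
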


The key point is that $U_\al^+\setminus U_\al^-$ is small and $(K,L)$-good, and can hence be controlled using \Cref{prop:domain.UB}.
	
	\begin{proof}
		We use Taylor expansion to bound the error term in \eqref{eq:system_of_two_equation_for_root}. 
		By the restriction to $\CG$ we have
		\begin{equation}
		\label{eq:linear_approxiamtion_for_f_order_2}
		\left|\frac1{\sqrt{n}}f(t,\rho) - F_\al(t,\rho)\right| \ll \frac{n^{2}\log^4 n}{N^{2}} \ll \frac{1}{n^{2-o(1)}}
		\end{equation}
		for all $(t,\rho)$ such that $(1+\rho)e^{it}\in U_\al$. On the event $\cA_\al''$, the affine transformation $F_\al$ is invertible as
		\begin{equation*}
			\left|\det \begin{pmatrix}
			X^\prime(\ang_\al) & Y^\prime(\ang_\al) \\ Y^\prime(\ang_\al) & -X^\prime(\ang_\al)
			\end{pmatrix}\right| = (X^\prime(\ang_\al))^2 + (Y^\prime(\ang_\al))^2 \in \left[n^2 \log^{-4K_0}n ,2n^2\log^4n\right].
		\end{equation*}
		By applying $F_\al^{-1}$ to \eqref{eq:linear_approxiamtion_for_f_order_2} and by observing that
		\begin{equation}\label{eqn:F:nice}
			\|\begin{pmatrix}
			X^\prime(\ang_\al) & Y^\prime(\ang_\al) \\ Y^\prime(\ang_\al) & -X^\prime(\ang_\al)
			\end{pmatrix}^{-1} \|_2= \frac{1}{(X^\prime(\ang_\al))^2 + (Y^\prime(\ang_\al))^2}\| \begin{pmatrix}
			X^\prime(\ang_\al) & Y^\prime(\ang_\al) \\ Y^\prime(\ang_\al) & -X^\prime(\ang_\al)
			\end{pmatrix} \|_2 \ll \frac{\log^{6K_0}n}{n} 
		\end{equation}
		we get that
		\begin{equation*}
			\left|(1+\rho)e^{it} - F_\al^{-1}  (\frac1{\sqrt{n}}f(t,\rho)) \right| \ll \frac{\log^{6K_0}n}{n}
			\left|\frac1{\sqrt{n}}f(t,\rho) - F_\al(t,\rho)\right| \ll \frac{1}{n^{3-o(1)}} \,.
		\end{equation*}
		Assuming $n$ is sufficiently large this gives the desired result.
	\end{proof}

	\begin{claim}
		\label{claim:no_two_roots_inside_annular_domain}
We have
		\[
		\bP\left(\bigcup_{\al: \theta_\al\notin \badarcs}  \left\{|Z(f) \cap C_\al|\geq 2 \right\} \right) = o(1).
		\]
	\end{claim}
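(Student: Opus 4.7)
The plan is to show $\bP(|Z(f)\cap C_\al| \ge 2) = o(N^{-1})$ for each individual $\al$ with $\theta_\al \notin \badarcs$, and conclude by a union bound over the $N = O(n^2/\log^{K_0}n)$ such indices. Since \Cref{cor:derivatives} handles $\cG^c$ with total probability $\exp(-\Theta(\log^4 n))$, we may work throughout on $\cG$. The strategy is to extract from the existence of two roots in $C_\al$ two nearly independent small-ball constraints on $W(\theta_\al)$, and then apply the small-ball/domain bound \Cref{prop:domain.UB}.

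First, the mere existence of a single root $z_1\in C_\al$ forces $f(e^{i\theta_\al})$ to be small, since by a first-order Taylor expansion
\[
|f(e^{i\theta_\al})| \le \sup_{||z|-1|\ll 1/n}|f'(z)| \cdot |z_1 - e^{i\theta_\al}| \ll n^{3/2}\log^2 n \cdot \frac{\log^{K_0}n}{n^2} = n^{-1/2}\log^{O(1)}n\,,
\]
so $|X(\theta_\al)|, |Y(\theta_\al)| \ll \log^{O(1)}n/n$. Second, the existence of two distinct roots $z_1,z_2\in C_\al$ forces $f'$ to be small near $e^{i\theta_\al}$. I apply Rolle's theorem to the real and imaginary parts of the complex-valued $F(s) = f(z_1 + s(z_2-z_1))$, $s\in[0,1]$, which vanishes at both endpoints; this yields $s_a,s_b\in(0,1)$ with $\Re F'(s_a)=\Im F'(s_b)=0$. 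Writing $F'(s) = f'(w(s))(z_2-z_1)$ and comparing the two stationary points via the uniform bound on $|f''|$ from $\cG$, one obtains $|f'(w(s_a))| \le \sup|f''|\cdot|z_1-z_2|$. A further Taylor step from $w(s_a)$ back to $e^{i\theta_\al}$ then gives
\[
|f'(e^{i\theta_\al})| \ll \sup|f''|\cdot \text{diam}(C_\al) \ll n^{5/2}\log^2 n \cdot \frac{\log^{K_0}n}{n^2} = n^{1/2}\log^{O(1)}n\,,
\]
which in view of Remark \ref{rmk:CR} translates to $X'(\theta_\al)^2 + Y'(\theta_\al)^2 \ll \log^{O(1)}n$.

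Combining the two constraints, on $\cG\cap\{|Z(f)\cap C_\al|\ge 2\}$ the vector $W(\theta_\al)$ lies in a domain $\cD_\al\subset \R^4$ of the form $B_1\times B_2$, where $B_1,B_2\subset \R^2$ are disks of radius $\log^{O(1)}n/n$, so $m(\cD_\al) \ll \log^{O(1)}n/n^4$. The domain $\cD_\al$ is $(K,L)$-good in the sense of Definition \ref{def:good.domain} for any prescribed $L$, provided $K$ is taken sufficiently large: its piecewise smooth boundary has $3$-dimensional volume $O(\log^{O(1)}n/n^3)$ and can thus be covered by cubes with corners on $n^{-K}\Z^4$ of total volume $O(\log^{O(1)}n/n^{K+3})$. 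Since $\theta_\al\notin\badarcs$, the angle $\theta_\al$ is $n^\kappa$-smooth, so the single-point case of \Cref{prop:domain.UB} applies and gives
\[
\bP\bigl(W(\theta_\al)\in \cD_\al\bigr) \ll m(\cD_\al) + n^{-L} \ll \frac{\log^{O(1)}n}{n^4}\,.
\]
Summing over the at most $N \le n^2/\log^{K_0}n$ admissible indices yields a bound of order $\log^{O(1)}n/n^2 = o(1)$, as required.

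The main technical issue is the Rolle-type step, since $f$ is complex-valued and one cannot directly conclude that $f'$ vanishes between two zeros. The workaround is to apply Rolle separately to the real and imaginary parts of the one-variable restriction $F$, obtaining two distinct stationary points $s_a\neq s_b$, and then use the a priori $C^2$ control from $\cG$ to merge them into a single point at which $|f'|$ is small. Once this is in hand, the rest is a bookkeeping exercise: Taylor transfer to $e^{i\theta_\al}$, verification that the resulting bad domain is $(K,L)$-good, and a direct appeal to the small-ball machinery of \Cref{sec:prelim} followed by a union bound.
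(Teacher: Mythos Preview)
Your proposal is correct and follows essentially the same route as the paper: a Rolle/mean-value argument on the segment between two roots to force $|f'(e^{i\theta_\al})|$ small, a Taylor step to force $|f(e^{i\theta_\al})|$ small, then a small-ball estimate on $W(\theta_\al)$ and a union bound over $\al$. The only cosmetic differences are that the paper bootstraps the refined bound on $|f'(e^{i\theta_\al})|$ to get a slightly sharper constraint $|f(e^{i\theta_\al})|\le n^{-3/2+o(1)}$ (versus your $n^{-1/2+o(1)}$ from the global bound on $|f'|$), and it invokes \Cref{thm:smallball} via an explicit ball covering rather than packaging the covering through \Cref{prop:domain.UB}; neither change affects the argument.
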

	\begin{proof}
		Assume that there exist $z_1,z_2\in C_\al$ such that $z_1\not=z_2$ and $f(z_1)=f(z_2)=0$. By the mean value theorem there exist $\zeta_1$ and $\zeta_2$ on the line segment connecting $z_1$ and $z_2$ so that
		\begin{align*}
			&\text{Re}(f^\prime(\zeta_1)) = \text{Re}\left(\frac{f(z_1)-f(z_2)}{z_1-z_2}\right)=0 \\ &\text{Im}(f^\prime(\zeta_2)) = \text{Im}\left(\frac{f(z_1)-f(z_2)}{z_1-z_2}\right)=0.
		\end{align*}
		On the event $\cG$, we apply the mean value theorem again and see that
		\[
		|f^\prime(e^{i\ang_\al}) - f^\prime(\zeta_j)| \ll \frac{1}{N} \max_{||z|-1|\leq n^{-3/2}} |f^{\dprime}(z)| \ll n^{1/2+o(1)},
		\]
		for $j=1,2$. With the above bound we can use Taylor expansion and get
		\begin{equation*}
			|f(e^{i\theta_\al})| \ll |f(z_1)| + \frac{1}{N}|f^{\prime}(e^{i\theta_\al})| \ll \frac{1}{n^{3/2-o(1)}}.
		\end{equation*}
Applying \Cref{cor:derivatives} we have
		\begin{align*}
			&\bP\left(\bigcup_{\al: \theta_\al\notin \badarcs} \left\{|Z(f) \cap C_\al|\geq 2 \right\} \right) 
			\leq  \bP\left(\cG^c\right)+\sum_{\al: \theta_\al\notin \badarcs} \bP \left(\left\{|Z(f) \cap C_\al|\geq 2 \right\},\ \cG \right)  \\ 
			&\qquad \qquad \ll e^{-\log^2n} +\sum_{\al: \theta_\al\notin \badarcs} \bP\left(|f(e^{i\theta_\al})| \leq \frac{1}{n^{3/2-o(1)}}\,,\, |f'(e^{i\theta_\al})| \le n \,\right) \,.
			\end{align*}
For each $\al$ in the last sum, we can cover the range of possible outcomes for $(X,Y,X',Y')$ with balls of radius $n^{-2+o(1)}$ and bounded overlap and apply the union bound and 
\Cref{thm:smallball} (recall that points outside of $\badarcs$ are 1-smooth and hence 1-spread) to bound each term by $O(n^{-3})$ (with plenty of room). Summing this bound over the $N=\lf \frac{n^2}{\log^{K_0} n}\rf$ values of $\al$ yields the claim.
	\end{proof}

	Now we conclude the main result of the section.
	\begin{proof}[Proof of Proposition \ref{prop:PPequivalence}] 
		First, we will show that $\bP\left(\mu_f^\sharp(U)>\nu_f^\sharp(U)\right)= o(1)$. Indeed we have the inclusion 
		\[
		\left\{\mu_f^\sharp(U)>\nu_f^\sharp(U)\right\} \subset \bigcup_{\al :\theta_\al\notin \badarcs } \left\{\cA_\al,\ Z_\al\in U,\ Z(f)\cap U_\al=\emptyset \right\}.
		\]
		On the event $Z_\al \in U$ we know that $(1+\rho_\al)e^{i\dang_\al} \in U_\al$, while on the event $\cA_\al$ the affine map $F_\al$ is invertible and we get that
		\[
		0\ \not\in f(U_\al) \implies (1+\rho_\al)e^{i\dang_\al} \not\in F_\al^{-1} \left(\frac1{\sqrt{n}}f(U_\al)\right).
		\]
		Applying \Cref{claim:symmetric_difference_is_small} we get
		\begin{align*}
			\bP\left(\cA_\al,\ Z_\al\in U,\ Z(f)\cap U_\al=\emptyset, \ \cG\right)  &\leq \bP\left(\cA_\al,\ (1+\rho_\al)e^{i\dang_\al}\in U_\al \setminus F_\al^{-1} \left(\frac1{\sqrt{n}}f(U_\al)\right)\,,\,\CG\, \right)\\
			& \leq \bP\left( (1+\rho_\al)e^{i\dang_\al}\in U_\al^+\setminus U_\al^- \,,\ \cG\right)\,.
		\end{align*}
		Recalling from \Cref{claim:symmetric_difference_is_small} that $U_\al^+\setminus U_\al^-$ is the $\delta$-neighborhood of the boundary of $U_\al$, we can cover the corresponding set in the $(\rho,t)$ by four rectangles $I\times J$ of area $O(\delta n^2)$.
Thus, on the last event we have that $W(\theta_\al) \in \cD_{I,J,n^{o(1)}}$ for one of four possibilities for $(I,J)$, each satisfying $|I|\times|J|\le \delta n^2\ll n^{-1/2}$. It follows from \Cref{prop:domain.UB} and \eqref{mLebD} that
\[
\bP\left( (1+\rho_\al)e^{i\dang_\al}\in U_\al^+\setminus U_\al^- \,,\ \cG\right) \ll \frac{1}{n^{1/2-o(1)}N} =o(N^{-1})
\]
Summing over $\al$ we get
		\begin{align*}
			\bP&\left(\mu_f^\sharp(U)>\nu_f^\sharp(U)\right)  \leq o(1)+\sum_{\al:\theta_\al\notin\badarcs } \bP\left(\cA_\al,\ Z_\al\in U,\ Z(f)\cap U_\al=\emptyset, \ \cG\right) = o(1).
		\end{align*}	
		The proof of the proposition will follow once we show that 
		\begin{equation}
			\label{eq:nu_f_bigger_mu_f_has_small_prob}
			\bP\left(\mu_f^\sharp(U)<\nu_f^\sharp(U) \right) = o(1).
		\end{equation}
		By Claim \ref{claim:no_two_roots_inside_annular_domain} and \Cref{cor:derivatives} the left hand side above is bounded by
		\begin{align}
			\label{eq:nu_f_bigger_mu_f_has_small_prob_first_reduction}
			& \bP\left(\bigcup_{\al:\theta_\al\notin\badarcs }\{ \cA_\al^\dprime, \ Z_\al\not\in U,\ |Z(f)\cap U_\al| = 1 \} \right)
			\notag \\ &\quad+ \bP\left(\bigcup_{\al:\theta_\al\notin\badarcs }\{ {\left(\cA_\al^{\dprime}\right)}^{c},\ |Z(f)\cap U_\al| = 1 ,\cG\} \right) + o(1).
		\end{align}
		Similar to the previous argument, on the event $\{\cA_\al'',\ Z_\al\not\in U \}$ the map $F_\al$ is non-degenerate and we have
		\[
		(1+\rho_\al) e^{i\dang_\al} \not\in U_\al,
		\]
		while the assumption that $|Z(f)\cap U_\al| = 1$ implies that $0\in f(U_\al)$. This tells us that
		\[
		(1+\rho_\al) e^{i\dang_\al} \in F_\al^{-1}\left(\frac1{\sqrt{n}}f(U_\al)\right) \setminus U_\al \subset U_\al^+ \setminus U_\al^-\,.
		\]
Arguing with \Cref{claim:symmetric_difference_is_small} as we did for the events $\{Z_\al\in U, Z(f)\cap U_\al=\emptyset\}$, we get that
		\begin{align*}
			 &\bP\left(\bigcup_{\al:\theta_\al\notin\badarcs }\{ \cA_\al'', \ Z_\al\not\in U,\ |Z(f)\cap U_\al| = 1 \} \right) \\ &\le o(1)+ \sum_{\al:\theta_\al\notin\badarcs }\bP\left((1+\rho_\al) e^{i\dang_\al} \in U_\al^+ \setminus U_\al  ,\ \cG \right)  =o(1).
		\end{align*}

		For the second probability in \eqref{eq:nu_f_bigger_mu_f_has_small_prob_first_reduction},
consider $\al\in [N]$ with $\theta_\al\notin \badarcs$ and suppose $\{(\cA_\al'')^c, |Z(f)\cap U_\al|=1,\cG\}$ holds. Let $\xi\in U_\al$ be the root of $f$. By Taylor expansion and the restriction to $\cG$ we have
\[
|f(e^{i\theta_\al})| \le |f(\xi)| + O(n^{3/2}\log^2n) |\xi-e^{i\theta_\al}| \ll \frac{\log^{K_0 + 2}n}{\sqrt{n}}
\]
which implies $|X(\theta_\al)|,|Y(\theta_\al)|\le n^{-1+o(1)}=o(n^{-2/3})$. From this and the restriction to $\cG$, if the event $\cA_\al''$ does not occur then we must have either $|X'(\theta_\al)|< n\log^{-2K_0}n$ or $|Y'(\theta_\al)|<n\log^{-2K_0}n$. Now, from \Cref{thm:box} and the boundedness of the Gaussian density (or \Cref{thm:smallball} and a covering by balls of bounded overlap) we get
\[
\bP\left( |X(\theta_\al)|, |Y(\theta_\al)| \le \frac{\log^{K_0 + 2}n}{n},\, |X'(\theta_\al)|\le n\log^{-2K_0}n \right)
\le \frac{\log^{4}n}{n^2} = o\left(N^{-1}\right),
\]
and similarly
\[
\bP\left( |X(\theta_\al)|, |Y(\theta_\al)| \le \frac{\log^{K_0 + 2}n}{n},\, |Y'(\theta_\al)|\le n\log^{-2K_0}n \right)
= o\left(N^{-1}\right).
\]
Taking the union bound over these two cases, we have thus shown
\[
\bP( (\cA_\al'')^c, |Z(f)\cap U_\al|=1,\cG) =o(N^{-1})
\]		
Applying the above and the union bound over the choices of $\al$, we see that \eqref{eq:nu_f_bigger_mu_f_has_small_prob_first_reduction} is $o(1)$, giving \eqref{eq:nu_f_bigger_mu_f_has_small_prob} as desired. 
	\end{proof}

\section{Proof of Proposition \ref{prop:moments:sum}}\label{sect:moments:sum}  
In this section we argue similarly as in \cite[Section~5]{CNg} with some minor modifications. Expanding the factorial moments $\bE\left[\left(\mu_{f}^\sharp(U)\right)_k\right]$ we see that
\begin{equation*}
	\bE\left[\left(\mu_{f}^\sharp(U)\right)_k\right] = \sum_{\boldsymbol{\alpha} \in E} \bP\left(\cE(\boldsymbol{\alpha})\right),
\end{equation*}
where $E:=\left\{\boldsymbol{\alpha}=(\alpha_1,\ldots,\alpha_k)\in [N]^k \mid \alpha_i\not=\alpha_j \ \forall i\not=j,\, \theta_{\al_i}\not\in \badarcs \right\}$ and
\[
\cE(\al) := \left\{Z_{\al_1}\in U,\ldots, Z_{\al_k} \in U \right\}.
\]
For some sufficiently small $\eps>0$ we consider the set $E^\prime:= \left\{\boldsymbol{\alpha} \in E\mid \left(\theta_{\al_1},\ldots,\theta_{\al_k}\right) \text{is } n^\eps\text{-spread} \right\}$. Since $|E^\prime| = N^k(1+o(1))$, we can use \Cref{prop:moments} and get that
\begin{equation*}
	\sum_{\boldsymbol{\alpha} \in E^\prime} \bP\left(\cE(\boldsymbol{\alpha})\right) = N^{k}\left(\frac{|U|}{12N}\right)^k + o(1) = \left(\frac{|U|}{12}\right)^k + o(1) 
\end{equation*}
so \Cref{prop:moments:sum} will follow once we show that
\begin{equation}
	\label{eq:sum_of_probabilities_over_E_setminus_E_prime}
	\lim_{n\to\infty} \sum_{\boldsymbol{\alpha} \in E\setminus E^\prime} \bP\left(\cE(\boldsymbol{\alpha})\right) = 0.
\end{equation}
The next lemma shows that close roots are typically separated. The proof is a simple modification of \cite[Lemma~2.11]{YZ} or \cite[Lemma~2.2]{CNg} and is deferred to Appendix \ref{appendix:separation}.
\begin{lemma}
	\label{lemma:good_events_are_well_separated}
	On the event $\cG$, for $(\alpha,\alpha^\prime) \in [N]^2$ with $\theta_\al,\theta_{\al^\prime} \not\in \badarcs$ we have
	\begin{enumerate}
		\item If $\cA_\al$ and $\cA_{\al+1}$ occur, then
		\begin{equation*}
			|\dang_\al - \ang_{\al}| \in \left[\frac{\pi}{2N} - \frac{\pi}{2N\log^{K_0}n}, \frac{\pi}{2N}\right]\,.
		\end{equation*}
		\item $\cA_\al$ and $\cA_{\al^\prime}$ cannot occur simultaneously as long as
		\[
		|\theta_\al - \theta_{\al^\prime}| \in \bigg(\frac{\pi}{N},\frac{1}{n\log^{4K_0}n} \bigg]\,.
		\]
	\end{enumerate}
\end{lemma}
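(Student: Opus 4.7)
The plan is to work on the event $\cG$ and leverage the linear structure of $F_\al$ together with a second-order Taylor approximation of $f/\sqrt n$. Under $\cG$ the second partial derivatives of $X,Y$ are bounded by $O(n^2\log^2 n)$, so Taylor's theorem yields
\[
\left| \tfrac{1}{\sqrt n}f\bigl((1+\rho)e^{it}\bigr) - F_\al(t-\theta_\al,\rho)\right| \ll n^2\log^2 n\cdot\bigl((t-\theta_\al)^2+\rho^2\bigr)
\]
whenever $(t,\rho)$ lies in an $O(1/n)$-neighborhood of $(\theta_\al,0)$. On $\cA_\al''$, the $2\times 2$ coefficient matrix $M_\al$ appearing in \eqref{eq:def_of_linear_approximation} is invertible with $\|M_\al^{-1}\|\ll \log^{2K_0}n/n$, and its unique zero is $z_\al:=(1+\rho_\al)e^{i\dang_\al}$. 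Writing $F_\al(z)=M_\al(z-z_\al)$ in local coordinates, we obtain the master estimate
\[
|z-z_\al| \ll \|M_\al^{-1}\|\cdot|F_\al(z)| \le \|M_\al^{-1}\|\Bigl(\tfrac{1}{\sqrt n}|f(z)| + n^2\log^2 n\cdot|z-e^{i\theta_\al}|^2\Bigr)
\]
for any admissible $z$, and an entirely analogous inequality with $\al$ replaced by any other index $\al'$ satisfying $\cA_{\al'}''$.

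For part (1) I would substitute $z=z_{\al+1}$. Since $z_{\al+1}\in C_{\al+1}$, we have $|z_{\al+1}-e^{i\theta_\al}|\ll 1/N$, and applying the Taylor bound at index $\al+1$ together with $F_{\al+1}(z_{\al+1})=0$ gives $|f(z_{\al+1})|/\sqrt n \ll \log^{2K_0+2}n/n^2$. Combining,
\[
|z_\al-z_{\al+1}| \ll \frac{\log^{2K_0}n}{n}\cdot\frac{\log^{2K_0+2}n}{n^2} = \frac{\log^{4K_0+2}n}{n^3} \ll \frac{\pi}{2n^2} = \frac{\pi}{2N\log^{K_0}n}.
\]
Since $z_\al,z_{\al+1}$ lie close to the unit circle, this transfers to the same bound on $|\dang_\al-\dang_{\al+1}|$. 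Combining with $\dang_{\al+1}\ge\theta_\al+\pi/(2N)$ (from $\dang_{\al+1}\in I_{\al+1}$) and $\dang_\al\le\theta_\al+\pi/(2N)$ (from $\dang_\al\in I_\al$) yields $\dang_\al-\theta_\al\in [\pi/(2N)-\pi/(2N\log^{K_0}n),\pi/(2N)]$, which is the claim of part (1).

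For part (2) set $\delta=|\theta_\al-\theta_{\al'}|\in(\pi/N,\, 1/(n\log^{4K_0}n)]$ and apply the master estimate with $z=z_{\al'}$. The same reasoning gives $|F_\al(z_{\al'})|\ll n^2\log^2 n\cdot\delta^2$, and hence
\[
|z_\al-z_{\al'}| \ll \frac{\log^{2K_0}n}{n}\cdot n^2\log^2 n\cdot\delta^2 = n\log^{2K_0+2}n\cdot\delta^2 \le \frac{\delta}{\log^{2K_0-2}n}.
\]
On the other hand, $|\theta_\al-\theta_{\al'}|>\pi/N$ forces $|\al-\al'|\ge 2$, so $\delta= |\al-\al'|\pi/N$ and the arcs $I_\al,I_{\al'}$ are disjoint with gap at least $\delta-\pi/N\ge\delta/2$. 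This gives $|\dang_\al-\dang_{\al'}|\ge\delta/2$, which (using again that the points are near the unit circle) contradicts $|z_\al-z_{\al'}|\le \delta/\log^{2K_0-2}n$ for $K_0>1$ and $n$ large. Hence $\cA_\al\cap\cA_{\al'}\cap\cG=\emptyset$.

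The main obstacle is the tight bookkeeping of logarithmic versus polynomial factors: the inverse-norm blow-up $\|M_\al^{-1}\|\asymp \log^{2K_0}n/n$ on $\cA_\al''$ is precisely what forces the threshold $1/(n\log^{4K_0}n)$ in part (2) and the $\pi/(2N\log^{K_0}n)$ slack in part (1). One must verify that the quadratic Taylor error, once amplified by $\|M_\al^{-1}\|$ and compared with the angular separations imposed by membership in $I_\al, I_{\al'}$, is just small enough to close both sides of the argument; this relies on $K_0$ being chosen sufficiently large relative to the Taylor-error exponents.
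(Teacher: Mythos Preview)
Your proposal is correct and follows essentially the same approach as the paper: both proofs rest on the second-order Taylor bound $|F_\al(z)-\tfrac{1}{\sqrt n}f(z)|\ll n^2\log^2 n\cdot|z-e^{i\theta_\al}|^2$ on $\cG$, together with the conformal structure of $M_\al$ giving $\|M_\al^{-1}\|\ll \log^{2K_0}n/n$ on $\cA_\al''$. The only organizational difference is that the paper argues each part by contrapositive---showing that $|F_{\al'}(z)|\ge |F_\al(z)|-|F_\al(z)-F_{\al'}(z)|>0$ throughout $C_{\al'}$, so $\cA_{\al'}$ fails---whereas you argue directly by bounding $|z_\al-z_{\al'}|$ via your ``master estimate'' and then comparing with the angular gap between $I_\al$ and $I_{\al'}$; the two are equivalent reformulations of the same inequality.
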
 

\begin{proof}[Proof of Proposition \ref{prop:moments:sum}]
	By the discussion above, we only need to show that \eqref{eq:sum_of_probabilities_over_E_setminus_E_prime} holds. By Corollary \ref{cor:derivatives} we can assume that $\cG$ holds. The second item in Lemma \ref{lemma:good_events_are_well_separated} implies that we only need to consider tuples $\boldsymbol{\alpha} \in E\setminus E^\prime$ of the form
	\begin{equation}
		\label{eq:rel_tuples_for_small_sum}
		\boldsymbol{\alpha} = \left(\alpha_1,\ldots,\alpha_{k-\ell},\alpha_{1}+1,\ldots, 
		\alpha_\ell
		+ 1\right)
	\end{equation}
	consisting of $\ell$ tuples of the form $(\al_j,\al_j+1)$ for some $0\leq \ell \leq k/2$, while the $k-\ell$ points $\theta_{\alpha_1},\ldots,\theta_{\alpha_{k-\ell}}$ are pairwise separated by at least $n^{-1}\log^{-4K_0}n$ in $[0,\pi]$.
	
	We divide the class of such $\al$ into into two sets $E_1$ and $E_2$, where $E_1$ consists of all $\al\in E\setminus E^\prime$ of the form \eqref{eq:rel_tuples_for_small_sum} (possibly with $\ell=0$) such that $|\theta_{\alpha_i}-\theta_{\alpha_j}|\leq n^{-1+\eps}$ for some $1\leq i < j \leq k-\ell$, and $E_2$ is the set of all $\al \in E\setminus E^\prime$ of the form \eqref{eq:rel_tuples_for_small_sum} with $\ell\ge 1$ and $|\theta_{\alpha_i}-\theta_{\alpha_j}| > n^{-1+\eps}$ for all $1\leq i < j \leq k-\ell$.
	
	For the sum over $E_1$, denote by $E_1^\ell$ the tuples $\al\in E_1$ with $\ell$ neighboring pairs as in \eqref{eq:rel_tuples_for_small_sum}. We have $|E_1^\ell| = O(N^{k-\ell}/n^{1-\eps})$, as there are $O(N/n^{1-\eps})$ choices for the close point with all others fixed. 
	Recalling the notation \eqref{def:domain}, we have
	\[
	\cE(\boldsymbol{\alpha}) \subseteq \big\{\walk(\ang_{\al_i})\in \cD_{U,V,2\log^2n} \;\forall 1\le i\le k\big\}
	\]
	with $V=[-\frac\pi2,\frac\pi2]$.
	Since the points $\theta_{\alpha_1},\ldots,\theta_{\alpha_{k-\ell}}$ are separated by at least $n^{-1}\log^{-4K_0}n$, we can apply Proposition \ref{prop:domain.UB} (recalling \Cref{fact:domains}) along with \eqref{mLebD} with $r=2\log^2n$, to get
	\[
	\sum_{\boldsymbol{\alpha} \in E_1} \bP\left(\cE(\boldsymbol{\alpha})\right) \ll \sum_{\ell=0}^{\lfloor k/2 \rfloor} |E_1^\ell| N^{-k+\ell} \log^{O(k)}n \ll \frac{\log^{O(k)}n}{n^{1-\eps}} = o(1).
	\] 
	
	For the sum over $E_2$, fix a tuple $\boldsymbol{\alpha}\in E_2$ with $\ell$ neighboring pairs with $\ell\ge1$. By the first item in Lemma \ref{lemma:good_events_are_well_separated}, we have the containment of events
	\[
	\{Z_{\al_j}\in U,\, Z_{\al_{j+1}}\in U \} \subset \left\{Z_{\al_j}\in U, \ |\dang_{\al_j} - \ang_{\al_j}| \in \left[\frac{\pi}{2N} - \frac{\pi}{2N\log^{K_0}n}, \frac{\pi}{2N}\right] \right\}\,.
	\]
	The event on the right hand side is contained in the event that
	\[
	W(\theta_\al) \in \cD_{U, V_-, 2\log^2n} \cup \cD_{U,V_+ ,2 \log^2n}
	\]
	with $V_-=[-\frac{\pi}2, -\frac\pi2+\frac{\pi}{2\log^{K_0}n}]$ and $V_+=[\frac\pi2-\frac{\pi}{2\log^{K_0}n}, \frac\pi2]$.
	As above, by \Cref{prop:domain.UB}, \Cref{fact:domains} and \eqref{mLebD} we get that for $\boldsymbol{\alpha}\in E_2$ with $\ell$ neighboring pairs,
	\[
	\bP\left(\cE(\boldsymbol{\alpha})\right) \ll \frac{1}{N^{k-\ell}} \left(\frac{\log^{8}n}{\log^{K_0}n}\right)^{k-\ell}. 
	\]
	Taking $K_0>8$, since the number of such tuples is at most $N^{k-\ell}$, the above display implies that
	\[
	\sum_{\boldsymbol{\alpha}\in E_2} \bP\left(\cE(\boldsymbol{\alpha})\right) = o(1).
	\]
	We have thus proved \eqref{eq:sum_of_probabilities_over_E_setminus_E_prime}, and hence also the proposition.
\end{proof}
\section{Extensions}
\label{sec-8}
The method presented in the paper allows one to consider other point processes related to  $\mu_f$ of 
\eqref{eq:def_mu_f}. Specifically,
consider
the $4$-tuple
\[\widehat Z_\alpha=(
n^2\rho_\al, N(\theta_\al-\tau_\al),
X'(\theta_\alpha)/n, Y'(\theta_\alpha)/n),\]
and introduce the point process
\begin{equation}
  \widehat \mu_f:= \sum_{\al =0}^{N} 
  \delta_{(\theta_\alpha,\widehat Z_\a {\bf 1}_{{\mathcal A}_\a}+(\infty,\infty,\infty,\infty){\bf 1}_{\mathcal{A}_\a^c})}.
	\end{equation}
Note that $\mu_f$ can be obtained from $\widehat \mu_f$ by an appropriate contraction. We then have the following.
\begin{proposition}
\label{prop-conc}
Under the assumptions of Theorem \ref{thm:main},
the process $\widehat \mu_f$ converges to a Poisson point process on 
$[0,\pi]\times \mathbb{R}^4$ with intensity 
$$(12/\pi^2)((x')^2+(y')^2)e^{- 12((x')^2+(y')^2)}
{\bf 1}_{y\in [0,\pi]}d\theta dxdy dx'dy'.$$
\end{proposition}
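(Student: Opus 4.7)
The plan is to adapt the moment-convergence strategy used to prove \Cref{thm:main} (via Propositions \ref{prop:moments}--\ref{prop:moments:sum}) to the marked point process $\widehat \mu_f$ on $[0,\pi]\times \R^4$. By Kallenberg's theorem it suffices to show that for any finite disjoint union $B\subset [0,\pi]\times \R^4$ of product boxes $J\times C$, with $J\subseteq[0,\pi]$ an interval and $C\subset \R^4$ a direct product of bounded intervals,
\[
\bE\bigl[(\widehat\mu_f(B))_k\bigr] \longrightarrow \lambda(B)^k \qquad \text{for all } k\ge 1,
\]
where $\lambda$ denotes the intensity measure in the statement.

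The first step is a joint version of \Cref{lem:gaussian.computation}. For a smooth, spread angle $\theta_\al$ and a box $C = U\times V \times R$ in the $(x,y,x',y')$-coordinates of $\widehat Z_\al$, I would repeat the conditional-Gaussian computation in the proof of \Cref{lem:gaussian.computation} but retain $z=(X'/n,Y'/n)$ as a free variable, integrating only over $R$ instead of the radial domain $\{|z|\le r\}$. The inner conditional probability that $(X,Y)$ lies in the appropriate thin strip still produces the factor $(1+o(1))\tfrac{4}{\pi}|z|^2\tfrac{|U||V|}{N}e^{-9|z|^2}$ exactly as in \eqref{gauss.comp4}, and multiplying by the density $\tfrac{3}{\pi}e^{-(3+o(1))|z|^2}$ of $z$ and integrating over $R$ yields
\[
\bP_{\BN(0,1)}\bigl(\widehat Z_\al\in C,\,\cA_\al\bigr)
= (1+o(1))\,\frac{|U|\,|V|}{N}\int_R \frac{12}{\pi^2}|z|^2 e^{-12|z|^2}\,dm(z).
\]
Second, the Gaussian-to-general reduction would be handled exactly as in the proof of \Cref{prop:moments} using \Cref{prop:compare}: the event $\{\widehat Z_\al\in C\}\cap\cA_\al$ is $(K,L)$-good for suitable $K,L$ by the same reasoning as \Cref{fact:domains} (only product-box and quadratic constraints appear), so replacing the Gaussian by a general sub-Gaussian law costs only a lower-order error.

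Third, I would expand the factorial moment and sum over $k$-tuples of indices with $\theta_{\al_i}\in J_i$. Bad arcs are excluded using \Cref{prop:exceptional}, and the remaining tuples are split into spread and non-spread ones as in the proof of \Cref{prop:moments:sum}. Spread tuples produce the main term: the marginal intensities decouple by \Cref{lem:decorr} (equivalently \Cref{prop:compare}(b)), and the number of spread tuples with each $\theta_{\al_i}\in J_i$ is $(1+o(1))\prod_i N|J_i|/\pi$, so the total spread contribution is $\prod_i \lambda(J_i\times C_i)$ up to a $o(1)$ error; this is the step in which the $\theta$-marginal normalization emerges from counting grid points. Non-spread tuples contribute $o(1)$ by the same argument as in the proof of \Cref{prop:moments:sum}, using \Cref{lemma:good_events_are_well_separated}, \Cref{prop:domain.UB}, and \Cref{fact:domains}. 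Together, these give the convergence of factorial moments on product boxes, hence, via Kallenberg, the vague convergence of $\widehat \mu_f$ to the claimed Poisson process.

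The main technical obstacle is really just the careful bookkeeping in the first step: tracking how the Gaussian density in the $(X'/n,Y'/n)$-plane combines with the conditional density in $(X,Y)$ to produce the stated intensity $(12/\pi^2)|z|^2 e^{-12|z|^2}$, and verifying that the relevant product-box domains remain $(K,L)$-good so that \Cref{prop:compare} applies cleanly. Beyond this, everything is a direct recycling of the machinery already developed for \Cref{thm:main}.
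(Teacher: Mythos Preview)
Your proposal is correct and follows essentially the same route as the paper's own argument: the paper likewise says the proof ``follows that of Theorem~\ref{thm:main}'', with the only modification being to replace the radial integration over $\{|z|\le r\}$ in \Cref{lem:gaussian.computation} by integration over a rectangle $I_1\times I_2$ in the $(X'/n,Y'/n)$-plane, exactly as you propose with your box $R$. Your identification of the intensity $(12/\pi^2)|z|^2e^{-12|z|^2}$ from the product of the marginal density $\tfrac{3}{\pi}e^{-3|z|^2}$ and the conditional factor $\tfrac{4}{\pi}|z|^2e^{-9|z|^2}$ matches the paper's computation, and the remaining steps (comparison via \Cref{prop:compare}, removal of bad arcs, spread/non-spread split) are recycled verbatim just as the paper indicates.
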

Indeed, the proof of Proposition \ref{prop-conc}
follows that of Theorem \ref{thm:main}. The main difference is 
in the computation of
$\bP_{\BN(0,1)}(\rho_\al\in U/n^2, \theta_\al-\tau_\al\in V/N, 
X'(\theta_\alpha)\in nI_1, Y'(\theta_\alpha)\in nI_2)$.
Following the lines of the proof of \Cref{lem:gaussian.computation}, for $\theta_\al\in [n^{-1+\eps},\pi- n^{-1+\eps}]$ this probability is given by
\begin{align}
		\label{eq:gaussian_computation_first_intensitybis}
&\quad\bP_{\BN(0,1)}\left(\frac{YX^\prime/n-XY^\prime/n}{(X^\prime/n)^2+(Y^\prime/n)^2}\in\frac1nU,\ \frac{XX^\prime/n + YY^\prime/n}{(X^\prime/n)^2+(Y^\prime/n)^2} \in \frac{n}NV,\  X'\in nI_1, Y'\in nI_2\right)\nonumber \\
&\quad =\frac{3}{\pi}\int_{I_1\times I_2}e^{-(3+
O(n^{-\eps}))|z|^2} \bP_{z}\left(\frac{\avg{(X,Y),z^\perp}}{|z|^2}\in \frac1nU, \ \frac{\avg{(X,Y),z}}{|z|^2}\in \frac{n}NV  \right) \dd m(z) ,
		\end{align}
		and the remainder of the proof proceeds along similar lines, with integration over $z\in B(0, r)$ replaced by integration over $z\in I_1\times I_2$.
		We omit details.

		Proposition \ref{prop-conc} allows one to compute, among other things, the joint law of $n^2 \, {\normalfont \text{dist}}(\mathbb{S}^1,Z(f))$ and
$ n\min_{z\in \mathbb{S}^1} 
|f(z)|$.  

\vskip 1cm
\noindent
\textbf{Acknowledgement} We thank the referee for a careful reading of the 
first version and for useful comments. 
We thank Ziv Huppert for pointing out a mistake in the original version of 
Proposition \ref{prop-conc}.

	\appendix
	\section{Proof of Lemma \ref{lemma:smallball:edge}}  \label{appendix:smallball}
With $a_j = \sin(jt)$ and $b_j=\cos(jt)$ we have $f(e^{it})= \sum_{j=1}^n\xi_j (a_j,b_j)$, viewed as a point in $\R^2$. Set
$$t_0=\delta^{-1}.$$ 
By a standard procedure (see for instance \cite[Eq. 5.4]{AP}) we can bound the small ball probability using the characteristic function,
$$\bP(\frac{1}{\sqrt{n}} f(e^{it}) \in B(w,\delta) ) \le C \Big(\frac{n}{t_0^2}\Big) \int_{\R^{2}} \prod_{j=1}^n |\bE e^{i \xi_j \langle (a_j,b_j),u \rang}|  e^{-\frac{n \|u\|_2^2}{2 t_0^2}} du =: J_1+J_2$$
where in $J_1, J_2$ the integral is restricted to the ranges  $\|u\|_2 \le r_0 =O(1)$ and $r_0 \le \|u\|_2$. Additionally, we can bound
(see for instance \cite[Eq. 9.2]{CNg}),
$$ \prod_{j=1}^n |\bE e^{i \xi_j \langle (a_j,b_j),u \rang}| \le \exp(-c \inf_{c_1\le |a| \le c_2} \sum_j \| a \langle (a_j,b_j),u/2\pi \rang\|_{\R/\Z}^2),$$
where $c_1<c_2$ are positive constants depending on $\xi$.
Thus, if $r_0$ is sufficiently small, then we have $\|a \langle (a_j,b_j),u/2\pi  \rangle\|_{\R/\Z} = |a| \|\langle (a_j,b_j),u/2\pi \rang\|_2$, and so from \Cref{lem:cov} we have that for any unit vector $\Be$, $\sum_j \langle (a_j,b_j), \Be  \rangle^2  \ge c' \lam^2 n$, yielding
$$\sum_j \| a \langle (a_j,b_j),u/2\pi \rang\|_{\R/\Z}^2  \ge c' n\|u\|_2^2\lam^2.$$
Hence
\begin{align*}
J_1 &=  C \Big(\frac{n}{t_0^2}\Big) \int_{\|u\|_2 \le r_0}  \prod_{j=1}^n |\bE e^{i \xi_j \langle (a_j,b_j),u \rang}|  e^{-\frac{n \|u\|_2^2}{2 t_0^2}} du  \le  C  \Big(\frac{n}{t_0^2}\Big) \int_{\|u\|_2 \le r_0}  e^{-\frac{n \|u\|_2^2}{2 t_0^2} - c'n \|u\|_2^2\lam^2} du\\
& =  C \Big(\frac{n}{t_0^2}\Big) \int_{\|u\|_2 \le r_0}  e^{-(\frac{n}{2 \lam t_0^2} + c'n) \|u\|_2^2\lam^2} du= O(\lam^{-2}\delta^{2}),
\end{align*}
where we used change of variable $v=\sqrt{\lam n } u$ and that $\int_v e^{-c\|v\|_2^2} dv =O(1)$ in the last estimate.

For $J_2$ we trivially have
\begin{align*}J_2 &=  C \Big(\frac{n}{t_0^2}\Big) \int_{r_0 \le \|u\|_2}  \prod_{j=1}^n |\bE e^{i \xi_j \langle (a_j,b_j),u \rang}|  e^{-\frac{n \|u\|_2^2}{2 t_0^2}} du  \le  C \delta^2 n \int_{r_0 \le \|u\|_2} e^{-\delta^2 n \|u\|_2^2/2}du \\
& \le C \delta^2 \int_{r_0 \sqrt{n} \le \|v\|_2} e^{-\delta^2 \|v\|_2^2} dv = O(\delta^2)
\end{align*}
provided that $\delta \gg n^{-1/2}$.

\section{Proof of Lemma \ref{lemma:good_events_are_well_separated}}  \label{appendix:separation}
\begin{proof}[Proof of part (1)]
	Fix $\al \in [N]$ with $\theta_\al\not\in \badarcs$ and assume that the event $\cA_\al$ holds. Recall that $F_\al$ given by \eqref{eq:def_of_linear_approximation} is the affine transformation which locally approximates the polynomial $\frac{1}{\sqrt{n}}f$ inside $C_\al$. It is evident from its definition up to the translation term $(X(\ang_\al),Y(\ang_\al))$, it is an orthogonal transformation (in particular it maps disks to disks; in fact this it not a coincidence, and can be seen as a consequence of the Cauchy--Riemann equation for $f$ in the polar coordinates.)
	
	On the event $\cG$, we have the bound
	\begin{align}
	\label{eq:comparison_between_F_al_and_F_al_1_second_derivative}
	\left|F_\al(z) - F_{\al+1}(z)\right| \leq \left|F_\al(z) - \frac1{\sqrt{n}}f(z)\right| &+ \left|F_{\al+1}(z) - \frac1{\sqrt{n}}f(z)\right| \nonumber \\ &\ll \frac{n^2 \log^2n}{N^2} \ll \frac{\log^{2K_0+2}n}{n^2}
	\end{align}
	valid for all $z\in C_{\al+1}$. On the event $\cA_\al$, assume that $\left\{\cA_\al, \ |\dang_\al - \ang_{\al}| \leq \frac{\pi}{2N} - \frac{\pi}{2N\log^{K_0}n} \right\}$. We have
	\begin{equation*}
	\text{dist}\left((1+\rho_\al) e^{i\ang_\al}, C_{\al+1}\right) \gg \frac{1}{N\log^{K_0}n}
	\end{equation*}
	which, together with the fact that $F_\al(\rho_\al,\dang_\al) = 0$ and the bound \eqref{eqn:F:nice}, implies that for all $z\in C_{\al+1}$ we have
	\begin{align*}
	|F_{\al+1}(z)| &\ge |F_\al(z)| - \left|F_\al(z) - F_{\al+1}(z)\right|\gg \frac{n \log^{-6K_0}n}{N\log^{K_0}n} - \frac{\log^{2K_0+2}n}{n^2} \gg \frac{n}{N\log^{7K_0}n}
	\end{align*}
	where in the second inequality we used \eqref{eq:comparison_between_F_al_and_F_al_1_second_derivative} and \eqref{eqn:F:nice}. In particular, we see that $F_{\al+1}$ does not vanish in $C_{\al+1}$ which implies that $\cA_{\al+1}$ does not hold.
\end{proof}
\begin{proof}[Proof of part (2)]
	We argue similarly as in the proof of the first part, only that we do not need to impose the extra separation within $C_\al$. Fix distinct $\al, \al' \not\in \badarcs$ such that $D:=|\ang_\al - \ang_{\al^\prime}| \in (\pi/N, n^{-1}\log^{-4K_0}n]$. Assuming $\cA_\al\cap \cG$ holds we have
	\begin{equation*}
	|F_\al(z) - F_{\al^\prime}(z)| \leq \left|\frac1{\sqrt{n}}f(z) - F_{\al}(z)\right| + \left| \frac1{\sqrt{n}}f(z) - F_{\al^\prime}(z)\right| \ll n^{2}(\log^2n) D^2.
	\end{equation*} 
	for all $z\in C_{\al^\prime}$. Furthermore, since $\frac{1}{\sqrt{n}}|f^\prime(e^{i\ang_\al})|\geq n\log^{-2K_0}n$ (by the Cauchy--Riemann equations in polar form -- see \Cref{rmk:CR}) we know that for all such $z$,
	\[
	|F_\al(z)| \gg |z-(1+\rho_\al)e^{i\ang_\al}||\frac1{\sqrt{n}}f^\prime(e^{i\ang_\al})| \gg D n\log^{-2K_0}n
	\]
	which implies that
	\begin{equation}
	\label{eq:lower_bound_on_F_al_prime}
	|F_{\al^\prime}(z)| \gg D n\log^{-2K_0}n - D^2n^{2}\log^2n \gg D n\log^{-2K_0}n
	\end{equation}
	since $D\le n^{-1}\log^{-4K_0}n$. 
	Since \eqref{eq:lower_bound_on_F_al_prime} holds for all $z\in C_{\al^\prime}$ we see that $F$ cannot vanish there, and hence $\cA_{\al^\prime}$ does not hold.
\end{proof}

 	\bigskip
\end{document}